\documentclass{birkjour}
\usepackage{amssymb}
%
%
%
 \newtheorem{theorem}{Theorem}[section]
 \newtheorem{corollary}[theorem]{Corollary}
 \newtheorem{lemma}[theorem]{Lemma}
 
\newtheorem{proposition}[theorem]{Proposition}
 \theoremstyle{definition}
 
 \theoremstyle{remark}
 \newtheorem{remark}[theorem]{Remark}
 
 \numberwithin{equation}{section}

\renewcommand{\phi}{\varphi}

\newcommand{\co}{\mathbb{C}}
\newcommand{\CC}{\mathbb{C}}
\newcommand{\N}{\mathbb{N}}
\newcommand{\Z}{\mathbb{Z}}

\newcommand{\pp}{\mathcal{P}}

\newcommand{\cp}{\mathbb{C_+}}
\newcommand{\cm}{\mathbb{C_-}}
\newcommand{\R}{\mathbb{R}}
\newcommand{\rl}{\mathbb{R}}

\newcommand{\ima}{{\rm Im}\,}

\newcommand{\A}{\mathcal{A}}
\newcommand{\LL}{\mathcal{L}}
\newcommand{\hh}{\mathcal{H}}
\newcommand{\he}{\mathcal{H}(E)}
\newcommand{\ZZ}{\mathcal{Z}}
\newcommand{\hht}{\hh(T,A,\mu)}
\newcommand{\htt}{\hh(T,A,\mu)}

\newcommand{\nn}{\mathcal{N}}

\newcommand{\tg}{{\rm tg}\,}

\begin{document}

%
%
%
%
%
%
%
%
%

\title[]{Cauchy--de Branges spaces, geometry of their reproducing kernels 
and multiplication operators}

\author[Anton Baranov]{Anton Baranov}

\address{%
Department of Mathematics and Mechanics\\ 
St. Petersburg State University\\
St. Petersburg, Russia }

\email{anton.d.baranov@gmail.com}

\thanks{The results of Sections 3, 4, 5 and 6 were 
obtained with the support of the Russian Science Foundation grant 
19-71-30002. 
The results of Sections 7 and 8 were obtained with the support 
of Ministry of Science and Higher Education of the Russian Federation, 
agreement No 075-15-2021-602, and of 
Russian Foundation for Basic Research grant 20-51-14001-ANF-a.}

\subjclass{Primary 46E22; Secondary 30D10, 30E20, 47A55,  47B32}
\keywords{Cauchy transform, reproducing kernel, de Branges space}



\begin{abstract}
Cauchy--de Branges spaces are Hilbert spaces of entire functions defined in terms of
Cauchy transforms of discrete measures on the plane and generalizing the classical de Branges theory. 
We consider extensions of two important properties of de Branges spaces to this, more general, setting.
First, we discuss geometric properties (completeness, Riesz bases) of systems of reproducing kernels
corresponding to the zeros of certain entire functions associated to the space. In the case of de Branges spaces they correspond to
orthogonal bases of reproducing kernels. The second theme of the paper is a characterization of the density of 
the domain of multiplication by $z$ in Cauchy--de Branges spaces. 
\end{abstract}

\maketitle
\sloppy

\section{Introduction}

Hilbert spaces of entire functions play an important part in modern analysis. Their structural properties 
(e.g., problems of uniqueness, interpolation or sampling) are of interest both from the function-theoretic point of view
and for numerous applications -- spectral problems for canonical systems (de Branges spaces), 
signal processing and time--frequency analysis (Paley--Wiener space, Bargmann--Segal--Fock space), 
theoretical physics, etc.

Recently, a systematic study of a new class of Hilbert spaces of entire functions was initiated 
by Yu.~Belov, T.~Mengestie and K.~Seip in \cite{bms1, bms2} and by E.~Abakumov, Yu.~Belov and the author 
in \cite{abb19, loc2, bar18}. These spaces, defined via discrete Cauchy transform and generalizing de Branges spaces, 
were named the {\it Cauchy--de Branges spaces} in \cite{abb19}. One of motivations for the study 
of the Cauchy--de Branges spaces is a functional model for rank one perturbations of compact normal operators 
\cite{bar18} (see Section \ref{appli}).

Let $T = \{t_n\}_{n=1}^\infty \in \co$ be a set of distinct complex numbers such that $|t_n| \to \infty$, $n\to \infty$,
and let $\mu = \sum_n \mu_n \delta_{t_n}$ with $\sum_n \frac{\mu_n}{|t_n|^2+1} <\infty$. 
We keep this notation throughout the whole paper.
With the pair $(T, \mu)$ we associate the space of Cauchy transforms
$$
\hh(T,\mu) = \bigg\{f(z) = \sum_n \frac{c_n\mu_n^{1/2}}{z-t_n}: \ (c_n)\in \ell^2 \bigg\}
$$
equipped with the norm $\|f\|_{\hh(T,\mu)} = \|(c_n)\|_{\ell^2}$. It is easy to see that $\hh(T,\mu)$ is a Hilbert space.

While $\hh(T,\mu)$ consists of meromorphic functions (with simple poles in the set $T$), it is more convenient to work with 
its isomorphic copy consisting of entire functions. Let $A$ be an entire function which has simple zeros at the points
$t_n$ and no other zeros. Put
$$
\hh(T,A,\mu) = \bigg\{F(z) = A(z) \sum_n \frac{c_n\mu_n^{1/2}}{z-t_n}: \ (c_n) \in \ell^2 \bigg\}
$$
and  $\|F\|_{\hht}= \|(c_n)\|_{\ell^2}$. 
We will refer to spaces $\hht$ as the {\it Cauchy--de Branges spaces}  (CdB-spaces  for short).
It should be mentioned that the space $\hht$ is essentially determined by $(T, \mu)$ 
and spaces $\hh(T,A_1,\mu)$ and  $\hh(T,A_2,\mu)$ are canonically isomorphic to each other and to $\hh(T,\mu)$.

Any Cauchy--de Branges space $\hht$ is a Reproducing Kernel Hilbert space
(i.e., the evaluation functionals $w\mapsto F(w)$ are continuous on $\hht$ for any $w\in \co$).
We will denote by $K_w$ the reproducing kernel of $\hht$ at the point $w$.
It is easy to see that $\hht$ has the {\it Division Property}:
$$
F \in\hht, \ F(w) = 0 \ \Longrightarrow \ \frac{F(z)}{z-w} \in\hht.
$$

It also follows from the definition of the inner product in $\hht$ that the functions $F_n(z) =\mu_n^{1/2}
\frac{A(z)}{z-t_n}$ form an orthonormal basis in $\hht$. Also note that 
for $F(z) = A(z) \sum_n \frac{c_n\mu_n^{1/2}}{z-t_n}$ one has 
$$
(F, F_n)_{\hht} = c_n = \frac{F(t_n)}{\mu_n^{1/2}A'(t_n)}.
$$
Hence, $K_{t_n} = \mu_n^{1/2}\overline{A'(t_n)} F_n$ and, thus,  the system of reproducing kernels $\{K_{t_n}\}_{t_n\in T}$ 
is an orthogonal basis in $\hht$.

In fact, this property characterizes CdB-spaces.

\begin{proposition} \cite{bms2}
\label{ax1}
Let $\hh$ be a Reproducing Kernel Hilbert space which consists of entire functions and has the Division Property. 
If $\hh$ has an orthogonal basis of reproducing kernels, then there exist
$T$, $A$ and $\mu$ as above such that $\hh=\hht$ with equality of norms.
\end{proposition}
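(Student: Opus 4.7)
The plan is to reverse-engineer $T$, $A$ and $\mu$ from the given orthogonal basis $\{K_{w_n}\}$ of reproducing kernels. Setting $T := \{w_n\}$, orthogonality immediately forces each $K_{w_n}$ to vanish on $T \setminus \{w_n\}$, since $(K_{w_n}, K_{w_m})_{\hh} = K_{w_n}(w_m) = 0$ for $m \neq n$.

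The heart of the argument is to exhibit a single entire function $A$ that serves as a common numerator for all the $K_{w_n}$'s. Fix the index $1$. Since $K_{w_1}(w_n) = 0$ for $n \neq 1$, the Division Property yields $G_n(z) := K_{w_1}(z)/(z-w_n) \in \hh$. The function $G_n$ vanishes on $T$ except at $w_1$ and $w_n$, where the values are $K_{w_1}(w_1)/(w_1 - w_n)$ and $K_{w_1}'(w_n)$ respectively. Writing $G_n$ in the basis $\{K_{w_m}\}$ and using the identity $(z-w_n) G_n(z) = K_{w_1}(z)$, a short calculation collapses to
$$
(z - w_1)\, K_{w_1}(z) = \lambda_n\, (z - w_n)\, K_{w_n}(z)
$$
for some nonzero scalar $\lambda_n$. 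I would then define $A(z) := (z - w_1) K_{w_1}(z)$ (up to an overall normalization), so that $A(z)/(z - w_n)$ lies in $\hh$ and is a scalar multiple of $K_{w_n}$ for every $n$.

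Next I would verify that $A$ has simple zeros located precisely on $T$. Simple zeros at each $w_m$ are built in by the identity above (since $K_{w_m}(w_m) \neq 0$). To exclude extraneous zeros, suppose $A(z_0) = 0$ with $z_0 \notin T$. Then $K_{w_1}(z_0) = 0$, and the Division Property gives $H := K_{w_1}(z)/(z - z_0) \in \hh$. The values of $H$ on $T$ coincide with those of $(w_1 - z_0)^{-1} K_{w_1}$, so the basis expansion forces equality of the two $\hh$-elements; this in turn forces $K_{w_1} \equiv 0$, a contradiction.

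Finally, I choose weights $\mu_n > 0$ so that $F_n(z) := \mu_n^{1/2} A(z)/(z - w_n)$ has unit $\hh$-norm. Each $F_n$ is a scalar multiple of $K_{w_n}$, so $\{F_n\}$ remains an orthonormal basis of $\hh$. Bessel's identity applied at any $w_0 \notin T$ gives
$$
\sum_n \mu_n \frac{|A(w_0)|^2}{|w_0 - w_n|^2} = K_{w_0}(w_0) < \infty,
$$
whence $\sum_n \mu_n/(|w_n|^2+1) < \infty$ and $\hh(T, A, \mu)$ is well-defined. The map $(c_n) \in \ell^2 \mapsto \sum_n c_n F_n$ is then an isometric isomorphism from $\ell^2$ onto both $\hh$ and $\hh(T, A, \mu)$, identifying them as the same space of entire functions with equality of norms. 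The main obstacle is the factoring identity in the second paragraph; once it is in hand, the remaining steps are essentially bookkeeping.
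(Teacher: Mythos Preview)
Your approach is essentially the same as the paper's: define $A(z) = (z-w_1)K_{w_1}(z)$, show that $A(z)/(z-w_n)$ is a scalar multiple of $K_{w_n}$, and normalize to obtain $\mu_n$. You are in fact more thorough than the paper's outline, since you verify that $A$ has no extraneous zeros off $T$ and check the summability condition $\sum_n \mu_n/(|w_n|^2+1)<\infty$, both of which the paper's sketch omits.
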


Cauchy--de Branges spaces generalize classical de Branges spaces. Namely, any de Branges space isometrically coincides
with a space $\hht$ where $T\subset \R$ and the function $A$ is real on $\R$. In particular, if $T=\Z$, $\mu_n\equiv 1$
and $A(z) = \pi^{-1}\sin \pi z$, then $\hht$ coincides with the Paley--Wiener space $PW_\pi$, 
since in this case the formula 
$$
F(z) = A(z) \sum_n \frac{c_n\mu_n^{1/2}}{z-t_n} = \sum_{n\in\Z} (-1)^n c_n\frac{\sin \pi(z-n)}{\pi(z-n)},
\qquad c_n = (-1)^n F(n),
$$
is simply the Shannon--Kotelnikov--Whittaker sampling formula. We discuss the relation between 
de Branges space and CdB-spaces in more detail in Section \ref{prelim}. 

De Branges spaces were introduced by L. de Branges in the beginning of 1960-s in his famous 
solution of the direct and inverse spectral problems for two-dimensional canonical systems
(see \cite{br} or \cite{rom}). They also turned out to be a very 
interesting object from the point of view of function theory, while operators on de Branges spaces
serve as models for various classes of abstract linear operators \cite{gt, martin, silva, by16}.
While at present there is no theory relating CdB spaces to spectral theory of a class of differential operators, 
these spaces have rich connections with many areas of function and operator theory. 
Therefore, it seems to be a noteworthy goal to extend some of the basic results of the de Branges theory 
to the more general and complicated field of CdB spaces.

A specific aim of the present (partially survey) paper is to study analogs of two important properties 
of de Branges spaces in CdB-space setting. The first part deals with the properties of the systems of reproducing kernels
corresponding to the zeros of certain entire functions associated to the space. In the case of de Branges spaces they correspond to
orthogonal bases of reproducing kernels. The second theme of the paper is a characterization of the density of 
the domain of multiplication by $z$ in $\hht$ in terms of the spectral data  $(T, \mu)$.


\subsection{De Branges orthogonal bases of reproducing kernels and their generalizations.} Given $T$, $A$ and $\mu$, 
we define for any $\gamma \in \co$ the entire function
\begin{equation}
\label{gh}
B_\gamma(z)  = A(z) \bigg(\gamma + \sum_n \Big(\frac{1}{t_n-z} -\frac{1}{t_n}\Big) \mu_n \bigg).
\end{equation}
We assume that the term in the brackets is simply $-1/z$ in case 
when $t_n=0$ (also we do not lose much in generality if we assume that $0\notin T$).

Functions $B_\gamma$ with real $\gamma$ play an important role 
in the de Branges theory. In particular, they give rise to orthogonal bases of reproducing kernels.

\begin{theorem} \cite[Theorem 22]{br}
Let $\hht$ be a de Branges space. Then for any $\gamma \in \rl$
all zeros of $B_\gamma$ are real and simple, 
for different $\gamma$-s the zero sets
$\mathcal{Z}(B_\gamma)$ of $B_\gamma$ interlace, and the family of 
reproducing kernels $\{K_w\}_{w\in \mathcal{Z}(B_\gamma)}$
is an orthogonal basis in $\hht$ for all $\gamma \in \rl$ except, maybe, one. 
\end{theorem}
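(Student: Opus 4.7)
The plan is to read everything off the meromorphic function
\[
\phi_\gamma(z) := \frac{B_\gamma(z)}{A(z)} = \gamma + \sum_n \mu_n\Big(\frac{1}{t_n-z} - \frac{1}{t_n}\Big),
\]
which, thanks to $T\subset\rl$ and $\mu_n>0$, is Herglotz: $\ima\phi_\gamma(z) = \ima z \cdot \sum_n \mu_n/|t_n-z|^2$. Hence all zeros of $\phi_\gamma$, and therefore of $B_\gamma = A\phi_\gamma$, are real; the simple zeros of $A$ at $t_n$ are absorbed by the simple poles of $\phi_\gamma$ there, so $\ZZ(B_\gamma)\cap T=\emptyset$. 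Between consecutive points of $T$, $\phi_\gamma$ is strictly increasing (its derivative $\sum_n \mu_n/(t_n-x)^2$ is positive) and runs from $-\infty$ to $+\infty$, so it has exactly one simple real zero there. This yields simplicity of $\ZZ(B_\gamma)$ and strict interlacing with $T$; interlacing of $\ZZ(B_{\gamma_1})$ with $\ZZ(B_{\gamma_2})$ for $\gamma_1\ne\gamma_2$ follows from the same monotonicity, since the zeros solve $\phi_0(x) = -\gamma$ and $\phi_0$ is strictly monotone on each gap.

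For the reproducing-kernel basis, label $\ZZ(B_\gamma) = \{w_k\}$ and set $F_k(z) := B_\gamma(z)/(z-w_k)$. Since $\phi_\gamma$ has residue $-\mu_n$ at $t_n$, one obtains $B_\gamma(t_n) = -\mu_n A'(t_n)$, hence $F_k \in \hht$ with coefficients $c_n^{(k)} = -\mu_n^{1/2}/(t_n - w_k)$ in the orthonormal basis $\{F_n\}$, and
\[
\|F_k\|_{\hht}^2 = \sum_n \frac{\mu_n}{(t_n-w_k)^2} = \phi_\gamma'(w_k) < \infty.
\]
The partial-fraction expansion $G(z)/A(z)=\sum_n G(t_n)/(A'(t_n)(z-t_n))$ evaluated at $z=w_k$ gives $\langle G, F_k\rangle_{\hht} = G(w_k)/A(w_k)$ for every $G\in\hht$; this identifies $F_k$ as a nonzero scalar multiple of $K_{w_k}$, so $\{F_k\}$ and $\{K_{w_k}\}$ span the same subspaces of $\hht$. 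Orthogonality is then a one-line partial-fraction calculation:
\[
\langle F_k, F_l\rangle_{\hht} = \sum_n \frac{\mu_n}{(t_n-w_k)(t_n-w_l)} = \frac{\phi_\gamma(w_k) - \phi_\gamma(w_l)}{w_k - w_l} = 0, \qquad k\ne l,
\]
using $\phi_\gamma(w_k) = \phi_\gamma(w_l) = 0$.

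The remaining step, completeness of $\{F_k\}$ in $\hht$, is the main obstacle. The cleanest route is to attach to $(A, B_\gamma)$ the Hermite--Biehler function $E_\gamma := A - iB_\gamma$ (it is Hermite--Biehler because $B_\gamma/A=\phi_\gamma$ is Herglotz and $A$, $B_\gamma$ have no common real zero, as $w_k\notin T$), identify $\hht$ isometrically with the classical de Branges space $\hh(E_\gamma)$, and invoke de Branges' Parseval/sampling identity for $\hh(E_\gamma)$ at the zeros of the imaginary part of $E_\gamma$. The \emph{except maybe one} phenomenon then has a transparent explanation: for every $k$, $\langle B_\gamma, F_k\rangle = B_\gamma(w_k)/A(w_k) = 0$, so $B_\gamma \perp F_k$, and hence whenever $B_\gamma$ itself lies in $\hht$, the family $\{F_k\}$ can span at most the codimension-one subspace $B_\gamma^\perp$. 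The shift identity $B_{\gamma_1} - B_{\gamma_2} = (\gamma_1-\gamma_2)A$ rules out two exceptional values at once: two memberships $B_{\gamma_j}\in\hht$ would force $A\in\hht$, which is impossible, since the resulting equality $\sum_n c_n\mu_n^{1/2}/(z-t_n)\equiv 1$ with $(c_n)\in\ell^2$ would force all residues $c_n\mu_n^{1/2}$ to vanish.
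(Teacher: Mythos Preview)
The paper does not supply its own proof of this statement; it is quoted verbatim from de~Branges' book and cited as \cite[Theorem~22]{br}. So there is nothing in the paper to compare your argument against directly.

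Your argument is the standard one and the parts on real simple zeros, interlacing (via strict monotonicity of $\phi_\gamma$ on each gap of $T$), the identification $F_k=c\,K_{w_k}$, and the orthogonality computation are all correct and cleanly presented. The ``at most one exceptional $\gamma$'' argument via $B_{\gamma_1}-B_{\gamma_2}=(\gamma_1-\gamma_2)A$ and $A\notin\hht$ is also correct.

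The only point that is not fully self-contained is completeness. You propose to form $E_\gamma=A-iB_\gamma$, identify $\hht$ isometrically with $\hh(E_\gamma)$, and then ``invoke de~Branges' Parseval/sampling identity'' at the zeros of the imaginary part of $E_\gamma$. As written this is close to circular: the identification $\hht=\hh(E_\gamma)$ (with the correct norm) is typically established by showing that $\{K_{t_n}\}_{t_n\in\ZZ(A)}$ is an orthogonal basis of $\hh(E_\gamma)$, which is precisely Theorem~22 applied to $E_\gamma$; and the Parseval identity at $\ZZ(B_\gamma)$ is Theorem~22 applied to $iE_\gamma$. To make the step non-circular you should either (a) carry out the Hardy-space/Clark-measure Parseval computation directly (i.e., prove that for $F\in\hh(E_\gamma)$ one has $\|F\|^2=\sum_k |F(w_k)|^2/\|K_{w_k}\|^2$ whenever $B_\gamma\notin\hh(E_\gamma)$, via the unitary map $F\mapsto F/E_\gamma$ onto $K_{\Theta_\gamma}$ and the standard boundary-value argument), or (b) argue growth: if $G\in\hht$ vanishes on $\ZZ(B_\gamma)$ then $H=G/B_\gamma$ is entire, and one must show $H$ is constant, which forces $G\in\co\cdot B_\gamma$. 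Either route is routine de~Branges technology, but neither is a one-liner, and it is worth flagging explicitly which external input you are using.
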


Thus, in de Branges spaces there is a continuous family of orthogonal bases of reproducing kernels. This
is a special case of a more general construction of the so-called Clark measures developed later (and independently)
by D.N. Clark \cite{clark} (see, also, a survey \cite{saks}). Moreover, 
the property of having at least two orthogonal bases of reproducing kernels
distinguishes de Branges spaces among all Hilbert spaces of entire functions:

\begin{theorem} \cite{bms1}
\label{jam}
If a Reproducing Kernel Hilbert space $\hh$ of entire functions
with Division Property has two orthogonal bases of reproducing kernels, then $\hh = \hht$
where $T$ lies on a straight line. Thus, $\hh$ is a de Branges space up to a rotation.
\end{theorem}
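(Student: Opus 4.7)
Plan: The strategy is to use both hypothesized orthogonal bases of reproducing kernels simultaneously and to extract from the reproducing kernel a rigid geometric identity linking the arguments of the segments between the two zero sets. Applying Proposition \ref{ax1} to each basis produces two presentations of $\hh$ as a Cauchy--de Branges space with equal norms,
\[
\hh = \hh(T,A,\mu) = \hh(S,B,\nu),
\]
where $T=\{t_n\}$ and $S=\{s_m\}$ are the simple zero sets of the entire functions $A$ and $B$, and $\mu_n,\nu_m>0$. The set $T\cap S$ is at most countable and contributes only trivial identities, so I assume $T\cap S=\emptyset$.

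Next, I write $K_w$ via the two orthonormal bases of reproducing kernels $F_n=\mu_n^{1/2}A/(z-t_n)$ and $G_m=\nu_m^{1/2}B/(z-s_m)$:
\[
K_w(z) = \overline{A(w)}\sum_n \frac{\mu_n}{\overline{w-t_n}}\,\frac{A(z)}{z-t_n} = \overline{B(w)}\sum_m \frac{\nu_m}{\overline{w-s_m}}\,\frac{B(z)}{z-s_m}.
\]
Letting $w\to s_m$ collapses the $B$-sum to the single term $\nu_m\overline{B'(s_m)}B(z)/(z-s_m)$. Evaluating the resulting identity at $z=t_n$ (on the right only the $n$-th summand survives, since $A(z)/(z-t_n)$ is entire and vanishes at every $t_{n'}\neq t_n$) and rearranging yields the master identity
\[
\alpha_n\,\beta_m \;=\; -\,\frac{s_m-t_n}{\overline{s_m-t_n}} \;=\; -e^{2i\arg(s_m-t_n)}, \qquad n,m\ge 1,
\]
where $\alpha_n = B(t_n)/(\mu_n A'(t_n))$ and $\beta_m = \nu_m\,\overline{B'(s_m)}/\overline{A(s_m)}$ are finite and nonzero.

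The final step is geometric. Fix $n_1\neq n_2$ and divide the identities for $(n_1,m)$ and $(n_2,m)$ to eliminate $\beta_m$:
\[
\frac{\alpha_{n_1}}{\alpha_{n_2}} \;=\; e^{2i(\arg(s_m-t_{n_1})-\arg(s_m-t_{n_2}))} \qquad\text{for every } m.
\]
Thus the angle subtended at $s_m$ by the segment $[t_{n_1},t_{n_2}]$ is constant modulo $\pi$ as $m$ varies; the inscribed-angle theorem then forces all the points $s_m$ to lie on a single generalized circle through $t_{n_1}$ and $t_{n_2}$, and the unboundedness $|s_m|\to\infty$ turns this generalized circle into the straight line through $t_{n_1}$ and $t_{n_2}$. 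Two distinct lines share at most one point, so this line does not depend on the pair $(n_1,n_2)$; consequently $T\cup S$ lies on one common line $\ell$, and a rotation sending $\ell$ to $\R$ identifies $\hh$ with a de Branges space. The main technical obstacle will be the clean derivation of the identity $\alpha_n\beta_m=-e^{2i\arg(s_m-t_n)}$: one must handle the conjugations and the limit $w\to s_m$ carefully and verify the factorization at all indices outside $T\cap S$. Once that identity is secured, the inscribed-angle reasoning is elementary and self-contained.
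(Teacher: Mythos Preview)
The paper does not give its own proof of this theorem; it is quoted from \cite{bms1}, and the only related argument in the paper is the proof of Proposition~\ref{nobas}, part~2, which the paper explicitly flags (in the Remark following it) as ``close to Theorem~\ref{jam}''. Your route---derive from the two reproducing-kernel expansions a separable identity $\alpha_n\beta_m=-(s_m-t_n)/\overline{(s_m-t_n)}$ and then use the inscribed-angle constraint together with $|s_m|\to\infty$ to force a common line---is exactly the mechanism behind the original proof in \cite{bms1}. Your derivation of the master identity is correct (your reproducing-kernel formula with the $\overline{A(w)}$ factor is the right one; cf.\ Section~\ref{riss}), and the cross-ratio/inscribed-angle step is sound: the locus $\{w:\arg\frac{w-t_{n_1}}{w-t_{n_2}}\equiv\theta\pmod\pi\}$ is a generalized circle through $t_{n_1},t_{n_2}$, and an unbounded such circle is the line through these two points, so in fact $\alpha_{n_1}/\alpha_{n_2}=1$.

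One point deserves a cleaner justification than ``contributes only trivial identities'': the reduction to $T\cap S=\emptyset$. In fact a short argument gives a dichotomy. If some $t_n=s_m$, then $K_{t_n}=K_{s_m}$, and orthogonality of $K_{s_m}$ to every $K_{s_{m'}}$ ($m'\neq m$) forces $K_{t_n}(s_{m'})=\mu_n\overline{A'(t_n)}\,A(s_{m'})/(s_{m'}-t_n)=0$, hence $s_{m'}\in T$ for all $m'\neq m$; by symmetry $t_{n'}\in S$ for all $n'\neq n$, and one concludes $T=S$. Thus for two \emph{distinct} orthogonal bases one necessarily has $T\cap S=\emptyset$, and your $\alpha_n,\beta_m$ are all finite and nonzero. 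With this in place your argument is complete.
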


In fact in \cite{bms1} a stronger result is proved: if a discrete weighted Hilbert transform 
is unitary, then the corresponding points lie on a line or on a circle. 
 
If $\gamma\notin \rl$, then in the de Brangean setting ($T\subset \rl$, $A$ is real on $\rl$)
the system $\{K_w\}_{w\in \mathcal{Z}(B_\gamma)}$ is no longer orthogonal, but it still can
form an unconditional basis (i.e., Riesz basis up to normalization) in $\hht$. It follows  
from the classical Hardy space theory that this is the case if and only if $\mathcal{Z}(B_\gamma)$ 
is a Carleson interpolating sequence in $\cp$ or in $\cm$ (see Proposition \ref{carl} for details). 

Zeros of functions $B_\gamma$ have one more important property: any two zero sets $\mathcal{Z}(B_\gamma)$ 
define the de Branges space up to a nonvanishing factor. 

\begin{theorem} \cite[Theorem 24]{br}
\label{twos}
Let  $\hh(T,A,\mu)$ and $\hh(\tilde T, \tilde A, \tilde \mu)$ be two de Branges spaces.
If there exist $\alpha, \beta\in \rl\setminus \{0\}$, 
$\alpha\ne \beta$, such that $\mathcal{Z}(B_\alpha) = \mathcal{Z}(\tilde B_\alpha)$ 
and $\mathcal{Z}(B_\beta) = \mathcal{Z}(\tilde B_\beta)$, then $\tilde T= T$, $\tilde \mu = \mu$
and $\tilde A = SA$ for some nonvanishing entire $S$. 
\end{theorem}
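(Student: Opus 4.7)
The plan is to reduce the statement to the single claim $P_\alpha \equiv P_\beta$, where $P_\gamma := B_\gamma/\tilde B_\gamma$. By Theorem~1.1 applied to both spaces, for each real $\gamma$ the zeros of $B_\gamma$ and $\tilde B_\gamma$ are real and simple, so the hypothesis $\mathcal{Z}(B_\alpha) = \mathcal{Z}(\tilde B_\alpha)$ makes $P_\alpha$ entire and nowhere zero, and similarly for $P_\beta$. From \eqref{gh} one reads off the identity $B_\gamma - B_\delta = (\gamma - \delta)A$ for all $\gamma,\delta \in \co$, and similarly for the tildes. Granting $P_\alpha = P_\beta =: R$,
\[
(\alpha - \beta) A = B_\alpha - B_\beta = R(\tilde B_\alpha - \tilde B_\beta) = R(\alpha - \beta)\tilde A,
\]
so $\tilde A = SA$ with $S := 1/R$ entire and nowhere vanishing. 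Writing $\Phi$ for the meromorphic factor with $B_\gamma = A(\gamma + \Phi)$ (the bracket in \eqref{gh}), the equality $B_\alpha = R\tilde B_\alpha$ forces $A(\alpha + \Phi) = A(\alpha + \tilde\Phi)$, whence $\Phi = \tilde\Phi$; since the pole positions and residues of $\Phi$ recover $T$ and $\mu$, one obtains $T = \tilde T$ and $\mu = \tilde\mu$.

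To attack $P_\alpha = P_\beta$, I would introduce the entire function
\[
G := B_\alpha\tilde A - A\tilde B_\alpha = A\tilde A\,(\Phi - \tilde\Phi).
\]
Subtracting $B_\beta\tilde A - A\tilde B_\beta$ from $G$ and using $B_\alpha - B_\beta = (\alpha - \beta)A$ together with $\tilde B_\alpha - \tilde B_\beta = (\alpha - \beta)\tilde A$ shows $G = B_\beta \tilde A - A\tilde B_\beta$ as well. From the first expression $G$ vanishes on $\mathcal{Z}(B_\alpha) = \mathcal{Z}(\tilde B_\alpha)$, and from the second on $\mathcal{Z}(B_\beta) = \mathcal{Z}(\tilde B_\beta)$. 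By Theorem~1.1 these two real sequences are disjoint (they interlace) and consist of simple zeros of the corresponding $B_\gamma$, so $B_\alpha B_\beta$ divides $G$ in the ring of entire functions; by symmetry so does $\tilde B_\alpha\tilde B_\beta$. The quotient
\[
H := \frac{G}{B_\alpha B_\beta} = \frac{\tilde A\,(\Phi - \tilde\Phi)}{A\,(\alpha + \Phi)(\beta + \Phi)}
\]
is thus an entire function, and proving $P_\alpha \equiv P_\beta$ is equivalent to proving $H \equiv 0$.

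The main obstacle is this conclusion $H \equiv 0$. Here one must invoke the full de Brangean hypothesis: with $T \subset \R$ and $A$ real on $\R$, each of $A, \tilde A, B_\gamma, \tilde B_\gamma$ is a real-entire function of bounded type in both $\cp$ and $\cm$, and $B_\gamma/A = \gamma + \Phi$ is a meromorphic Herglotz function. Consequently $A$ and $B_\gamma$ share mean type in each half-plane, while the shared zero data for two distinct values of $\gamma$ pins down the mean types of $A$ and $\tilde A$ as well. Rewriting $H$ as a product of bounded-type quotients and applying a Phragmén–Lindelöf argument on $\cp$ and $\cm$ separately, supplemented by the Herglotz asymptotics of $\Phi$ and $\tilde\Phi$ (which control $H$ along vertical rays), one shows that $H$ is of bounded type with mean type zero and admits a decay estimate at infinity that forces $H \equiv 0$. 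The technical heart is precisely this bounded-type/mean-type matching and the verification of the boundary behavior of $\Phi - \tilde\Phi$; these are standard tools in the Hermite–Biehler $E$-function framework but account for the bulk of the work in a detailed write-up.
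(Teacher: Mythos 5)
Your reduction is sound as far as it goes: the identity $B_\gamma - B_\delta = (\gamma-\delta)A$, the simplicity and disjointness of $\mathcal{Z}(B_\alpha)$ and $\mathcal{Z}(B_\beta)$ via Theorem \ref{jam}'s companion Theorem 1.1, the divisibility of $G = B_\alpha\tilde A - A\tilde B_\alpha$ by $B_\alpha B_\beta$, and the equivalence of $H\equiv 0$ with the full conclusion are all correct. But the entire content of the theorem sits in the one step you leave as a plan, and the plan as sketched cannot work. Your function $H = \tilde A\,(\Phi-\tilde\Phi)\,/\,\bigl(A\,(\alpha+\Phi)(\beta+\Phi)\bigr)$ retains the factor $\tilde A/A$, about whose growth the hypotheses say nothing: the conclusion itself tolerates $\tilde A = SA$ for an \emph{arbitrary} nonvanishing entire $S$ (replace $\tilde A$ by $e^{z^2}\tilde A$ --- the second space is still a de Branges space with the same $\tilde T,\tilde\mu$, and the zero sets of $\tilde B_\alpha, \tilde B_\beta$ are unchanged, so every hypothesis is preserved). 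Consequently the boundary estimates that any Phragm\'en--Lindel\"of or mean-type argument for $H$ would require --- and which must be available \emph{before} one knows $\Phi=\tilde\Phi$ --- would entail comparing $|\tilde A|$ with $|A|$ on rays, which is simply not derivable from the data. Note also that Theorem \ref{twos}, unlike Theorem \ref{twos1}, carries no finite-order or Cartwright-type hypothesis, so order-based arguments have nothing to stand on; and ``bounded type with mean type zero'' does not by itself force vanishing (constants qualify), while the ``decay estimate'' you invoke is exactly what is missing.

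The fix, and the point of the paper's argument, is to form quotients in which $A$ and $\tilde A$ cancel identically, instead of the additive combination $B_\alpha\tilde A - A\tilde B_\alpha$. The paper writes $\tilde B_\alpha = g_1 B_\alpha$ and $\tilde B_\beta = g_2 B_\beta$ with $g_1, g_2$ nonvanishing entire and divides one relation by the other, obtaining $(\alpha+\tilde\Phi)(\beta+\Phi) = g\,(\alpha+\Phi)(\beta+\tilde\Phi)$ with $g=g_1/g_2$ and no trace of $A$ or $\tilde A$ left; then (in the setting where the paper actually proves this, namely Theorem \ref{twos1} with the convergence-class and finite-order hypotheses --- the proof printed under the label of Theorem \ref{twos} is really that of Theorem \ref{twos1}) the Goldberg--Ostrovskii estimate \cite[Theorem 6.1]{go} bounds each bracket above and below off a set of zero area density, the zero-density Liouville Theorem \ref{dens} makes the finite-order function $g$ constant, normalization at infinity gives $g\equiv 1$, and $(\beta-\alpha)(\Phi-\tilde\Phi)=0$ follows by pure algebra. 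For the de Brangean statement \ref{twos} itself the paper cites de Branges, and there the substitute for growth hypotheses is again a ratio in which the $A$-factors cancel: $B_\beta/B_\alpha = (\beta+\Phi)/(\alpha+\Phi)$ is a meromorphic function mapping $\cp$ into a half-plane, and by Krein's representation such a function is determined up to a positive constant by its interlacing zeros and poles, which your hypotheses prescribe for both spaces simultaneously. Once the cancellation of $\tilde A/A$ is effected by either route, your concluding algebraic step recovering $T=\tilde T$, $\mu=\tilde\mu$ and $\tilde A = SA$ goes through verbatim.
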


On the canonical systems side zeros of $B_\gamma$ correspond to spectra of selfadjoint extensions
of the ``canonical system operator'' (see \cite{rom}). Thus, Theorem \ref{twos}
is a de Branges space counterpart of the classical ``two spectra theorem'' of G. Borg which allows to recover the potential
of the Sturm--Liouville equation from the spectra of its Dirichlet and Neumann problems (see, e.g., \cite[Section 7]{rom}).

We are interested in the properties of the system of reproducing kernels $\{K_w\}_{w\in \mathcal{Z}(B_\gamma)}$
in the case of general CdB-spaces. We know that such system cannot be an orthogonal basis unless $T$ lies on a line. 
Several natural questions are:
\medskip
\\
{\bf Problems.} 1. Is it true that there always exist 
$\gamma$ such that $\{K_w\}_{w\in \mathcal{Z}(B_\gamma)}$ is a Riesz basis for $\hht$?
\smallskip

2. More generally, how the set of parameters
$\gamma$ such that $\{K_w\}_{w\in \mathcal{Z}(B_\gamma)}$ is a Riesz basis depends on $T$ and $\mu$? 
\smallskip

3. Is it true that the family $\{K_w\}_{w\in \mathcal{Z}(B_\gamma)}$ is at least complete in $\hht$, i.e., 
$\mathcal{Z}(B_\gamma)$ is always a uniqueness set for $\hht$ when $\gamma\ne 0$? In the case when 
$B_\gamma$ has multiple zeros this means that there is no nonzero function in $\hht$ with zeros at 
$\mathcal{Z}(B_\gamma)$ of the corresponding multiplicity.
\smallskip

4. Do the zero sets $\mathcal{Z}(B_\alpha)$ and $\mathcal{Z}(B_\beta)$ with $\alpha\ne\beta$ define $\hht$
up to a nonvanishing factor?
\medskip

These problems seem to be in general difficult. In particular,
it is not even clear whether the zero set of  $B_\gamma$ is always infinite. This is related to a problem
posed by J.~Clunie, A.~Eremenko and J.~Rossi \cite{cer}:
\medskip
\\
{\bf Conjecture} (Clunie, Eremenko, Rossi, 1993). {\it Let $a_n>0$, $t_n\in \co$, $|t_n| \to \infty$, and
$\sum_n \frac{a_n}{|t_n|+1} <\infty$. Then the function
$$
f(z) = \sum_n \frac{a_n}{z-t_n} 
$$
has infinitely many zeros. }
\medskip

The conjecture was confirmed for many special cases \cite{cer, el, lr}, however, to the best of our knowlegde, it is still open in general. 
The question for the regularized Cauchy transforms (as in the definition of $B_\gamma$) is apparently 
more complicated, since now we nead to deal with a Cauchy transform with coefficients $\mu_n/t_n$
with $\mu_n>0$. Therefore, in what follows we will distinguish a class of CdB-spaces with additional condition that
$$
 \sum_n \frac{\mu_n}{|t_n|+1} <\infty.
$$
In this case we will say that CdB-space $\hht$ belongs to the  {\it convergence class}. 
We say that $\hht$ is {\it small} if, moreover, $\sum_n \mu_n <\infty$.
For CdB-spaces of the convergence class we modify the definition of the functions $B_\gamma$ 
and put
\begin{equation}
\label{ghh}
B_\gamma(z) = A(z) \bigg(\gamma + \sum_n \frac{\mu_n}{t_n-z}\bigg).
\end{equation}

We also say that CdB-space $\hht$ is a {\it space of finite order} if all functions in $\hht$ are of finite order. 
It is not difficult to show that this is the case if and only if $A$ is of finite order (and its order majorizes the order of all elements
in $\hht$), see \cite[Lemma 2.5]{abb19}.

Under these additional restrictions on $\hht$ we are able to obtain positive answers to the above questions. 

\begin{theorem} 
\label{comp}
1. Let $\hht$ be a CdB-space of the convergence class and of finite order. Then for any $\gamma\ne 0$  
the sequence $\mathcal{Z}(B_\gamma)$ is a uniqueness set for $\hht$ \textup(counting multiplicities\textup).

2. If, moreover, the space $\hht$ is small, then $\mathcal{Z}(B_0)$ is not a uniqueness set,
but the subspace of functions vanishing on $\mathcal{Z}(B_0)$  is one-dimensional.
\end{theorem}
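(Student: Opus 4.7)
Let $F = \sum_n c_n F_n \in \hht$ with $(c_n) \in \ell^2$ vanish on $\mathcal{Z}(B_\gamma)$ counting multiplicities. The plan is to examine the quotient $h := F/B_\gamma$ and show that it must be constant. Writing $F = Ag$ and $B_\gamma = A\phi$ with
\[
g(z) = \sum_n \frac{c_n \mu_n^{1/2}}{z - t_n}, \qquad \phi(z) = \gamma - \sum_n \frac{\mu_n}{z - t_n},
\]
one has $h = g/\phi$. The simple poles at $T$ cancel in this ratio, since the residues of $g$ and $\phi$ at $t_n$ are $c_n \mu_n^{1/2}$ and $-\mu_n$ respectively, which also yields $h(t_n) = -c_n \mu_n^{-1/2}$; and the zeros of $\phi$, which coincide with $\mathcal{Z}(B_\gamma)$, are cancelled by the zeros of $g$ imposed by the vanishing hypothesis on $F$. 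Hence $h$ is entire, of finite order (inherited from $A$), and $\sum_n \mu_n |h(t_n)|^2 = \|(c_n)\|_{\ell^2}^2 < \infty$.

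Next I would exploit the fact that $\hh(T,\mu)$ contains no nonzero entire function: any Cauchy series with vanishing residues at every $t_n$ has all its coefficients equal to zero. The identity $g = h\phi$ then forces, by comparison of Cauchy expansions,
\[
h(z)\phi(z) = -\sum_n \frac{\mu_n h(t_n)}{z - t_n},
\]
and Cauchy--Schwarz with weights $\mu_n$ produces
\[
|h(z)\phi(z)|^2 \le \Big(\sum_n \mu_n |h(t_n)|^2\Big) \cdot S(z), \qquad S(z) := \sum_n \frac{\mu_n}{|z - t_n|^2}.
\]

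The crux is then to bound $|h|$ on a sequence of circles $|z| = r_k \to \infty$. The convergence class assumption gives, via Abel summation applied to $\sum_n \mu_n/(1 + |t_n|) < \infty$, the estimate $\sum_{|t_n| \le R} \mu_n = o(R)$. Combined with the tail bound $\sum_{|t_n| > R} \mu_n/|t_n|^2 \to 0$, this shows $S(z) \to 0$ and $|\phi(z) - \gamma| \to 0$ uniformly on circles $|z| = r$ that avoid $T$ by a distance comparable to $r$. The finite-order hypothesis enters through the polynomial bound $n(r) = O(r^{\rho + \eta})$ on the counting function of $T$, which via a pigeonhole on annuli $[R, 2R]$ delivers the needed sequence of good radii. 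For $\gamma \neq 0$ one has $|\phi(z)| \to |\gamma|$ on such circles, so the displayed inequality gives $|h(z)| = o(1)$; in the small $\gamma = 0$ case an analogous comparison of $|\phi|$ and $\sqrt{S}$ yields $|h(z)| = O(1)$. The maximum principle then forces $h$ to be bounded on all of $\CC$, and Liouville makes it constant.

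Finally, $h \equiv c$ gives $F = cB_\gamma$, so everything reduces to deciding when $B_\gamma \in \hht$. A residue count shows $B_\gamma \in \hht$ iff $\phi$ coincides with its Cauchy series $-\sum_n \mu_n/(z - t_n)$ with residue coefficients $(-\mu_n^{1/2})$ in $\ell^2$, i.e., iff $\gamma = 0$ and $\sum_n \mu_n < \infty$. Hence for $\gamma \neq 0$ we must have $c = 0$ and $F \equiv 0$, proving part 1. For $\gamma = 0$ in the small case, the function $B_0 = -\sum_n \mu_n^{1/2} F_n$ is a nonzero element of $\hht$ vanishing on $\mathcal{Z}(B_0)$, and every such element is a scalar multiple of $B_0$, proving part 2. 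The main obstacle is the growth step in the third paragraph: the circles $|z| = r_k$ must be simultaneously far enough from $T$ to control $S$ and $|\phi - \gamma|$, and yet exist for arbitrarily large $r_k$. The finite-order hypothesis is precisely what enables this balance via the pigeonhole argument, and the approach breaks down without it.
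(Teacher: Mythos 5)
Your overall architecture is the same as the paper's: divide out, $F = B_\gamma h$, show $h$ is constant by a Liouville-type argument, and close with the residue computation showing that $B_\gamma \in \hht$ would force $\gamma = 0$ and $\sum_n \mu_n < \infty$ (that final step, the identity $h(t_n) = -c_n\mu_n^{-1/2}$, and the Cauchy--Schwarz bound $|h\phi|^2 \le \|(c_n)\|^2_{\ell^2}\, S(z)$ are all correct). The genuine gap is in your third paragraph. Circles $|z| = r$ that ``avoid $T$ by a distance comparable to $r$'' do not exist in general: already for $T = \Z + \frac12$ every large circle passes within distance $\frac12$ of $T$, no matter how the radii are chosen. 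What your finite-order pigeonhole actually produces is a gap of width about $r^{1-\rho-\eta}$ in the set of moduli $\{|t_n|\}$ inside $[r,2r]$, hence circles at distance only $d(r) \sim r^{1-\rho-\eta}$ from $T$. Feeding this into your own splitting of $S(z)$: the ranges $|t_n| \le r/2$ and $|t_n| \ge 2r$ are indeed controlled by $o(r)/r^2$ and the tail condition, but the middle range contributes up to $d(r)^{-2}\sum_{r/2<|t_n|<2r}\mu_n$, and the convergence class hypothesis gives only $\sum_{r/2<|t_n|<2r}\mu_n = r\,\epsilon(r)$ with $\epsilon(r)\to 0$ at an arbitrarily slow rate. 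You therefore need $d(r)^2 \gg r\,\epsilon(r)$, while the pigeonhole yields $d(r)^2 \sim r^{2-2\rho-2\eta}$; these are incompatible as soon as the order is at least $\tfrac12$ (and even for smaller order when $\epsilon(r)$ decays slowly). The same defect affects the estimate $|\phi(z)-\gamma|\to 0$ (whose middle range needs $d(r) \gg r\,\epsilon(r)$) and the $\gamma=0$ case, where you need $\sqrt{S(z)} \lesssim |\phi(z)|$ on entire circles. So the route ``bounds on good circles, then maximum principle, then classical Liouville'' cannot be carried out: the finite-order hypothesis does not buy the radii you need.

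The repair is exactly the paper's device: abandon full circles and work off an exceptional set of zero area density. By the Goldberg--Ostrovskii theorem (\cite[Theorem 6.1]{go}), any Cauchy transform $\sum_n a_n/(z-t_n)$ with $\sum_n |a_n|/(|t_n|+1) < \infty$ has $L^p$-means, $0<p<1$, tending to zero on circles, hence is uniformly small off a set of zero area density. Applied to $\phi-\gamma$ (using the convergence class assumption) and to $g$ (using Cauchy--Schwarz and $\sum_n \mu_n/(|t_n|+1)^2 < \infty$), this bounds $h = g/\phi$ on $\co\setminus(\Omega\cup\Omega_1)$ with $\Omega, \Omega_1$ of zero area density; then, in place of the maximum principle, one invokes the Liouville theorem for finite-order functions bounded off a zero-density set (Theorem \ref{dens}, proved in Section 3 by Khabibullin's elementary argument). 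This is where finite order genuinely enters: not in manufacturing good radii, but in making the zero-density Liouville theorem applicable. In the small case $\gamma=0$ the paper likewise shows $|B_0(z)| \gtrsim |z|^{-1}|A(z)|$ off a zero-density set (here $\sum_n\mu_n<\infty$ is needed to apply \cite[Theorem 6.1]{go} to $\sum_n \mu_n t_n/(t_n-z)$), giving $|h(z)| = o(|z|)$ off such a set and again $h$ constant. With that substitution, all your remaining steps go through and reproduce the paper's proof.
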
 

It is well known (see, e.g., \cite[Theorem 6.2]{go})
that in the conditions of Theorem \ref{comp} the function $B_\gamma$ has infinitely many zeros,
and moreover, its zero set has zero defect and maximal possible order. Theorem \ref{comp} shows that
this set is also maximal in the sense of being a uniqueness set for the corresponding CdB-space.

We also can prove a variant of two spectra theorem.

\begin{theorem} 
\label{twos1}
Let 
$\hh(T,A,\mu)$ and $\hh(\tilde T, \tilde A, \tilde \mu)$ be two converegence class CdB-spaces of finite order
and let
$B_\gamma$ and $\tilde B_\gamma$ be the corresponding functions \eqref{ghh}. If there exist $\alpha, \beta\in \co\setminus \{0\}$, 
$\alpha\ne \beta$, such that $\mathcal{Z}(B_\alpha) = \mathcal{Z}(B_\alpha)$ 
and $\mathcal{Z}(B_\beta) = \mathcal{Z}(\tilde B_\beta)$ \textup(counting multiplicities\textup), then $\tilde T= T$, $\tilde \mu = \mu$
and $\tilde A = SA$ for some nonvanishing entire $S$.
\end{theorem}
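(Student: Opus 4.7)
The plan is to exploit the linear relation $B_\alpha - B_\beta = (\alpha - \beta) A$ (from \eqref{ghh}) together with Hadamard factorization to reduce the problem to identifying the two Cauchy transforms
\[
s(z) \;:=\; \sum_n \frac{\mu_n}{t_n - z}, \qquad \tilde s(z) \;:=\; \sum_n \frac{\tilde\mu_n}{\tilde t_n - z}.
\]

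First, since both spaces are of finite order, $A$ and $\tilde A$ are entire of finite order, and a direct estimate shows that for a convergence-class space $|s(z)|$ grows at most polynomially off small neighborhoods of $T$; hence $B_\gamma = A(\gamma + s)$ and $\tilde B_\gamma = \tilde A(\gamma + \tilde s)$ are entire of finite order. Setting $\phi := B_\alpha/B_\beta = (\alpha+s)/(\beta+s)$ and $\tilde\phi := \tilde B_\alpha/\tilde B_\beta = (\alpha+\tilde s)/(\beta+\tilde s)$, the assumed coincidence of zero sets forces the zero and pole sets of $\phi$ and $\tilde\phi$ to agree with multiplicity. Hence $\phi/\tilde\phi$ is entire and nonvanishing, and by Hadamard factorization there are polynomials $P, Q$ with $B_\alpha = e^P \tilde B_\alpha$ and $B_\beta = e^Q \tilde B_\beta$; elementary algebra then gives
\[
e^{P-Q} \;=\; \frac{\phi}{\tilde\phi} \;=\; \frac{(\alpha + s)(\beta + \tilde s)}{(\alpha + \tilde s)(\beta + s)}.
\]

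Second, I would control the right-hand side at infinity. The convergence-class hypothesis forces $s(z)\to 0$ and $\tilde s(z)\to 0$ as $|z|\to\infty$ along a suitably chosen sequence of circles $|z| = R_k$ with $R_k\to\infty$, obtained by a Cartan-type exceptional-set argument to avoid small neighborhoods of $T\cup\tilde T$. On such circles the displayed quotient tends uniformly to $1$, so $|e^{P-Q}|$ is uniformly bounded on $\{|z|=R_k\}$; by the maximum modulus principle this bound extends to the disks $|z|\le R_k$, hence to all of $\co$, and Liouville's theorem forces $e^{P-Q}$ to be the constant $1$.

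Third, from $e^{P-Q} \equiv 1$ the identity above reduces, after expansion, to $(\alpha-\beta)(s - \tilde s) \equiv 0$, and hence $s \equiv \tilde s$. Reading off the simple poles and residues of $s$ recovers $(t_n, \mu_n)$ uniquely, so $\tilde T = T$ and $\tilde\mu = \mu$. Finally $A$ and $\tilde A$ are entire with the same simple zero set, whence $S := \tilde A/A$ is entire and nowhere vanishing, as required. The main obstacle is establishing the uniform decay of $s$ and $\tilde s$ along a sequence of circles in step two: this requires a Borel/Cartan-type lemma for discrete convergence-class Cauchy transforms, and without such decay one only obtains a crude growth estimate $|e^{P-Q}(z)| \le \exp(C|z|^{\mathrm{ord}\,A})$, which merely bounds $\deg(P-Q)$ rather than forcing $P-Q$ to be constant.
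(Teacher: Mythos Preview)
Your strategy coincides with the paper's: write the two ratios $B_\alpha/\tilde B_\alpha$ and $B_\beta/\tilde B_\beta$ as zero-free entire functions, take their quotient to obtain
\[
\frac{(\alpha+s)(\beta+\tilde s)}{(\alpha+\tilde s)(\beta+s)}
\]
as an entire nonvanishing function of finite order, show it is identically $1$, and read off $s\equiv\tilde s$. The only divergence is in how you handle the step you correctly flag as the main obstacle. You propose to find full circles $|z|=R_k$ on which $s,\tilde s\to 0$ uniformly and then apply the maximum modulus principle; this can be made to work but the uniform decay on complete circles is genuinely delicate for convergence-class transforms (a naive Cartan-disk argument controls $\sum_n \mu_n/|z-t_n|$, which may diverge even when $s$ is small). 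The paper sidesteps this by combining two ingredients: the Goldberg--Ostrovskii estimate $\int_0^{2\pi}|s(re^{i\theta})|^p\,d\theta\to 0$ for $p<1$, which yields $|s|\le\varepsilon$ off a set $\Omega$ of \emph{zero area density}, and the Liouville-type Theorem~\ref{dens} (any finite-order entire function bounded on $\mathbb{C}\setminus\Omega$ with $\Omega$ of zero area density is constant). This pair is precisely the ``Borel/Cartan-type lemma'' you are looking for, and it avoids having to locate entire good circles.
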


We pass to the question about Riesz bases of the form $\{\tilde K_w\}_{w\in \mathcal{Z}(B_\gamma)}$,
where $\tilde K_w =  K_w/\|K_w\|$ are the normalized reproducing kernels. 
We can prove this property only in a special case
of small CdB spaces with a certain separation of $T$.

We say that the sequence $T=\{t_n\}$ is {\it power separated} (with exponent $N$) if 
there exist numbers $C>0$ and $N>-1$ such that, for any $n$,
\begin{equation}
\label{powsep}
{\rm dist}\,(t_n, T\setminus\{t_n\}) \geq C(|t_n|+1)^{-N}.
\end{equation}

\begin{theorem} 
\label{bas}
Let $\hht$ be a small \textup(i.e., $\sum_n \mu_n <\infty$\textup) CdB-space. Assume that $T$ is power separated
with exponent $N$ and $(t_n^{2N} \mu_n) \in\ell^p$ for some $p>0$. If $\gamma\ne 0$ and
the zeros of $B_\gamma$ are simple, then the family $\{\tilde K_w\}_{w\in \mathcal{Z}(B_\gamma)}$ is a Riesz basis in $\hht$.

In particular, the theorem is true if $\hht$ is a small CdB-space and $T$ is separated, i.e., 
$|t_n-t_m|\ge \delta$ for some $\delta>0$ and any $n\ne m$.
\end{theorem}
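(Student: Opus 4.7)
The plan is to view $\mathcal{Z}(B_\gamma)$ as a small perturbation of $T$ and apply a Bari-type theorem relative to the orthonormal basis $\{\tilde K_{t_n}\}$, which coincides with a unimodular multiple of $F_n(z) = \mu_n^{1/2}A(z)/(z-t_n)$. First I would localize the zeros of $B_\gamma$ by factoring
$$
B_\gamma(z) = A_n(z)\bigl[(\gamma+\tilde S_n(z))(z-t_n)-\mu_n\bigr], \quad A_n(z)=\frac{A(z)}{z-t_n},\ \tilde S_n(z) = \sum_{m\ne n}\frac{\mu_m}{t_m-z}.
$$
The hypothesis $(t_n^{2N}\mu_n) \in \ell^p$ forces $\mu_n$ to be much smaller than the power-separation gap $(|t_n|+1)^{-N}$ for large $n$; choosing $r_n$ with $\mu_n/|\gamma| \ll r_n \ll (|t_n|+1)^{-N}$ and applying Rouch\'e on $\partial D(t_n, r_n)$ against the leading linear polynomial $(\gamma+\tilde S_n(t_n))(z-t_n)-\mu_n$ produces exactly one simple zero $w_n$ of $B_\gamma$ in $D(t_n, r_n)$, with $w_n - t_n = \mu_n/\gamma + o(\mu_n)$. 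Since $\gamma + S(z) \to \gamma$ off $T$ as $|z|\to\infty$, these disks exhaust $\mathcal{Z}(B_\gamma)$ up to a finite set, giving a natural bijection $n\leftrightarrow w_n$.

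Second, I would establish the quadratic closeness $\sum_n \|\tilde K_{w_n}-\tilde K_{t_n}\|^2 < \infty$. Expanding $K_{w_n} = \sum_m \overline{F_m(w_n)}\,F_m$ and using $A(w_n) = A'(t_n)(w_n-t_n)+O((w_n-t_n)^2) = O(\mu_n)\cdot A'(t_n)$, one obtains $|F_m(w_n)|^2 \lesssim \mu_m\mu_n^2|A'(t_n)|^2/|t_n-t_m|^2$ for $m\ne n$. Power separation then gives $\sum_{m\ne n}\mu_m/|t_n-t_m|^2 = O((|t_n|+1)^{2N})$, so the off-diagonal mass of $\|K_{w_n}\|^2$ is $O(\mu_n^2(|t_n|+1)^{2N}|A'(t_n)|^2)$ against the diagonal $\mu_n|A'(t_n)|^2$, whence
$$
\|\tilde K_{w_n}-\tilde K_{t_n}\|^2 \lesssim \mu_n(|t_n|+1)^{2N}.
$$
For $p\le 1$ the $\ell^p$ hypothesis yields the summability directly; for $p > 1$ a finer annular decomposition of $\{t_m\}$ around $t_n$ should sharpen the estimate on $\sum_m \mu_m/|t_n-t_m|^2$ and absorb the $\ell^p$ decay.

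Finally, I would verify completeness and minimality. Completeness of $\{K_{w_n}\}$ follows from Theorem \ref{comp}: power separation implies that the canonical product on $T$ is of finite order, so $\hht$ is of finite order and of convergence class (since $\sum\mu_n<\infty$), and Theorem \ref{comp} then declares $\mathcal{Z}(B_\gamma)$ a uniqueness set for any $\gamma\ne 0$. For minimality, the biorthogonal system $g_k(z) = B_\gamma(z)/[(z-w_k)B_\gamma'(w_k)]$ lies in $\hht$ via
$$
\frac{B_\gamma(z)}{z-w} = -\sum_n \frac{\mu_n^{1/2}}{t_n-w}\,F_n(z),
$$
and satisfies $g_k(w_l) = \delta_{kl}$ by construction. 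Bari's theorem then converts quadratic closeness plus exactness (completeness and minimality) into the Riesz basis property for $\{\tilde K_{w_n}\}$. The main obstacle is the off-diagonal estimate in Step 2, balancing the polynomial factor $(|t_n|+1)^{2N}$ against the decay of $\mu_n$ prescribed by $(t_n^{2N}\mu_n)\in\ell^p$; the separated case ($N=0$) is immediate, as $(\mu_n)\in\ell^1$ is automatic from $\sum\mu_n<\infty$.
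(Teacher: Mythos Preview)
Your overall strategy is the same as the paper's: localize the zeros of $B_\gamma$ near $T$ via Rouch\'e (the paper packages this as Proposition~\ref{hry}), establish quadratic closeness of $\{\tilde K_{w_n}\}$ to the orthonormal basis $\{\tilde K_{t_n}\}$, and finish with a Bari-type argument.

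The genuine gap is in your off-diagonal estimate. You bound $\sum_{m\ne n}\mu_m/|t_n-t_m|^2$ by the crude $O((|t_n|+1)^{2N})$, using only the worst-case separation $|t_n-t_m|\ge C(|t_n|+1)^{-N}$ together with $\sum_m\mu_m<\infty$. This yields $\|\tilde K_{w_n}-\tilde K_{t_n}\|^2\lesssim \mu_n|t_n|^{2N}$, which is summable only when $(t_n^{2N}\mu_n)\in\ell^1$, i.e.\ $p\le 1$; your remark that ``a finer annular decomposition\ldots should sharpen the estimate'' does not prove the case $p>1$. The paper's decisive step is the uniform bound
\[
\sum_{k\ne n}\frac{\mu_k}{|s_n-t_k|^2}=O(1),
\]
obtained by using the hypothesis $(t_k^{2N}\mu_k)\in\ell^p$ \emph{inside the $k$-sum} (so that the decay of $\mu_k$ cancels the factor $|t_k|^{2N}$ coming from the local spacing near each $t_k$), combined with the lattice structure of a power-separated sequence as in Lemma~\ref{bbb}. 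With this $O(1)$ bound in hand the quadratic closeness reduces to $\sum_n|c_n|^2<\infty$ with $c_n=-\gamma^{-1}\mu_n^{1/2}$, i.e.\ simply to $\sum_n\mu_n<\infty$, and the $\ell^p$ hypothesis is not invoked again. So the missing idea is not a refinement of your estimate in $n$ but the correct placement of the decay hypothesis in the $k$-variable.

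A secondary difference: you verify completeness via Theorem~\ref{comp} and minimality via the biorthogonal system, then invoke Bari's theorem in its full form. The paper sidesteps the separate completeness check by using the stability of the Riesz-basis property for reproducing-kernel systems under moving finitely many points: one chooses $n_0$ so the tail of the quadratic-closeness sum is $<1$, concludes that $\{\tilde K_{s_n}\}_{n>n_0}\cup\{\tilde K_{t_n}\}_{n\le n_0}$ is a Riesz basis by small perturbation, and then swaps the first $n_0$ points back. Your route is valid but longer.
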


In Section \ref{riss} we prove a more general result (Theorem \ref{bas2}) about Riesz bases 
of reproducing kernels corresponding to the
zeros of some entire function which is a small, in a sense, perturbation of $A$. In Section \ref{appli}
we use the functional model for rank one perturbations of normal operators to show that 
the eigenvectors of certain rank one perturbations form a Riesz basis. Recently, O. Dobosevych 
and R. Hryniv \cite{hry1, hry2} studied possible spectra of rank one perturbations of
unbounded selfadjoint operators whose spectrum is discrete and separated. Assume that $T=\{t_n\}_{n\ge 1} \subset \rl$  
is separated and let $\A$ be an unbounded selfadjoint operator with simple spectrum $T$ on some Hilbert
space $H$. Then a set $S = \{s_n\}_{n\ge 1}$ is the spectrum of some rank one pertubation $\LL = \A+ a\otimes b$, $a,b\in H$,
if and only if $S$ can be enumerated so that $\sum_n|s_n-t_n| <\infty$ \cite[Theorems 3.1, 4.1]{hry2}. 
It is easy to extend this statement to normal operators $\A$ and also to show that in this case 
the (generalized) eigenvectors of $\LL$ form a Riesz basis in $H$ (see Theorem \ref{bas3} below). 


\subsection{Operator of multiplication by $z$ in CdB-spaces.}
In the de Branges theory an important role is played by the operator of multiplication by $z$. Clearly, this is an unbounded operator.
Given a CdB space $\hht$, the domain of the operator $M_{z^N}$ of multiplication by $z^N$, $N\in \N$, is given by 
$\mathcal{D}_{z^N} = \{ F\in\hht: \ z^N F\in \hht\}$, $M_{z^N} F=z^N F(z)$, $F\in \mathcal{D}_{z^N}$. 

The multiplication operator $M_z$ in a de Branges space serves as a model for a class
of symmetric linear operators with deficiency indices $(1, 1)$ (to be precise, for simple regular closed
operators), see \cite{martin, silva}. 

A necessary and sufficient condition for the domain of $M_z$ to be dense in a de Branges space
is given in \cite[Theorem 22]{br}.
It turns out that either $\mathcal{D}_z$ is dense, or its closure has codimension 1 and is itself
a de Branges space with respect to the initial norm. Moreover, if a de Branges space has a de Branges subspace of 
codimension 1, then it necessarily must be given by the closure of $\mathcal{D}_z$. 
Analogous results hold for $M_{z^N}$. We extend these results to the case of general CdB-spaces. 
At least some arguments of \cite{br} essentially use the symmetry with respect to $\R$ 
and cannot be applied in the general case. Also, we replace the notion of de Branges subspaces by a more general
notion of a {\it nearly invariant subspace}.

We say that a closed linear subspace $\hh_0 \subset \hht$ is {\it nearly invariant} if 
it has the Division Property itself, that is, $f\in \hh_0$, $f(w)=0$ implies that 
$\frac{f(z)}{z-w} \in \hh_0$.

\begin{theorem} 
\label{dom}
Given a CdB space $\hht$, the following are equivalent: 
\begin{enumerate} 
\item [(i)] $\hht$ contains a nearly-invariant subspace $\hh_0$ of codimension $N$\textup;  
\smallskip
\item [(ii)] ${\rm clos}\,\mathcal{D}_{z^N}$ is a subspace of $\hht$ of codimension $N$\textup; 
\smallskip
\item [(iii)] $\sum_n |t_n|^{2N-2} \mu_n <\infty$.  
\end{enumerate} 
In this case the nearly invariant subspace of codimension $N$ is unique
and is given by  $\hh_0  = {\rm clos}\,\mathcal{D}_{z^N}= \{B_0, \dots, B_{N-1}\}^\perp$,
where
\begin{equation}
\label{bj}
B_j(z) = A(z)\sum_n \frac{\overline{t_n}^j  \mu_n}{z-t_n}, \qquad j=0, \dots, N-1.
\end{equation}
\end{theorem}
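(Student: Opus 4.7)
The plan is to reduce the whole theorem to an explicit description of $\mathcal{D}_{z^N}$ in the orthonormal basis $\{F_n\}$. Writing $F=\sum c_n F_n$ and using the polynomial identity $(z^N-t_n^N)/(z-t_n)=\sum_{j=0}^{N-1}z^j t_n^{N-1-j}$, one computes
\begin{equation*}
z^N F(z)-\sum_n t_n^N c_n F_n(z)=-A(z)\sum_{j=0}^{N-1} z^j\, S_{N-1-j}(F),\qquad S_k(F):=\sum_n c_n \mu_n^{1/2} t_n^k.
\end{equation*}
By Cauchy--Schwarz and the standing assumption $\sum\mu_n/(|t_n|^2+1)<\infty$, every $S_k(F)$ with $k\le N-1$ converges absolutely whenever $(t_n^N c_n)\in\ell^2$. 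Since the only polynomial $Q$ with $A\cdot Q\in\hht$ is $Q\equiv 0$ (comparing residues at each $t_n$), this yields the characterization: $F\in\mathcal{D}_{z^N}$ if and only if $(t_n^N c_n)\in\ell^2$ and $S_0(F)=\dots=S_{N-1}(F)=0$.

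For the implication \textup{(iii)}$\Rightarrow$\textup{(ii)}+\textup{(i)} I would use that \textup{(iii)} is equivalent to $(\overline{t_n}^{\,j}\mu_n^{1/2})_n\in\ell^2$ for each $j\le N-1$, so $B_j=\sum_n \overline{t_n}^{\,j}\mu_n^{1/2}F_n$ lies in $\hht$; these are linearly independent because $\sum_j\alpha_jB_j=0$ would force a nonzero polynomial of degree $<N$ to vanish on the infinite set $\overline T$. Since $\langle F,B_j\rangle=S_j(F)$, the characterization gives $\mathcal{D}_{z^N}\subset\{B_0,\dots,B_{N-1}\}^\perp$, a subspace of codimension $N$. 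Density is proved via a bounded right inverse of $F\mapsto(S_0(F),\dots,S_{N-1}(F))$: by Vandermonde invertibility, fix once and for all $R_0,\dots,R_{N-1}\in\mathrm{span}\{F_{m_1},\dots,F_{m_N}\}$ with $S_i(R_j)=\delta_{ij}$; for any $F\in\{B_j\}^\perp$ the corrected truncations $P_KF-\sum_i S_i(P_KF)R_i$ lie in $\mathcal{D}_{z^N}$ and converge to $F$ because $S_i(P_KF)\to S_i(F)=0$. Near-invariance of $\{B_0,\dots,B_{N-1}\}^\perp$ is a direct algebraic check: for $F$ in this space with $F(w)=0$, $w\notin T$, the decomposition $t_n^j/(t_n-w)=w^j/(t_n-w)+\sum_{k=0}^{j-1}w^kt_n^{j-1-k}$ transforms $S_j(F/(z-w))$ into $-w^jF(w)/A(w)+\sum_{k<j}w^kS_{j-1-k}(F)$, which vanishes because $F(w)=0$ and $F\in\{B_0,\dots,B_{N-1}\}^\perp$.

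The converses and the uniqueness statement rest on a duality argument in the weighted Hilbert space $\hh^{(N)}=\{F\in\hht:(t_n^Nc_n)\in\ell^2\}$ with $\|F\|_N^2=\sum(1+|t_n|^{2N})|c_n|^2$, on which every $S_j$ with $j\le N-1$ is bounded by the standing assumption even without \textup{(iii)}. For \textup{(ii)}$\Rightarrow$\textup{(iii)}: any $G=\sum g_nF_n$ in $(\overline{\mathcal{D}_{z^N}})^\perp$ satisfies $\sum c_n\overline{g_n}=0$ on $\mathcal{D}_{z^N}$; after the change of variable $\hat c_n=(1+|t_n|^{2N})^{1/2}c_n$, standard $\ell^2$-duality forces $g_n=\mu_n^{1/2}P(\overline{t_n})$ for some polynomial $P$ of degree $\le N-1$ with $(g_n)\in\ell^2$. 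The space of admissible polynomials has dimension exactly $1+\max\{k:\sum\mu_n|t_n|^{2k}<\infty\}$, so codimension $N$ forces \textup{(iii)} together with the identification $(\overline{\mathcal{D}_{z^N}})^\perp=\mathrm{span}\{B_0,\dots,B_{N-1}\}$. The implication \textup{(i)}$\Rightarrow$\textup{(iii)} and the resulting uniqueness is the main obstacle: for any nearly-invariant $\hh_1$ of codimension $N$ and $G\in\hh_1^\perp$, the identity $\langle F/(z-w),G\rangle=0$ for $F\in\hh_1$ with $F(w)=0$, combined with $F(w)=-A(w)\sum c_n\mu_n^{1/2}/(t_n-w)$, produces a scalar $\lambda(w)$ such that for every $w\notin T$ the sequence $\bigl((g_n+\psi(w)\mu_n^{1/2})/(\overline{t_n}-\overline w)\bigr)_n$ with $\psi(w)=\overline{\lambda(w)A(w)}$ belongs to $\hh_1^\perp$. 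Using the $N$-dimensionality of $\hh_1^\perp$, differentiating this family in $\overline w$ and extracting finite linear dependencies should force $g_n=\mu_n^{1/2}P(\overline{t_n})$ with $\deg P<N$, closing the argument via the same dimension count. Turning the functional equation for $\psi$ into the precise polynomial structure of $g_n$ --- and confirming the resulting degree is strictly less than $N$ --- is the step I expect to require the most delicate work.
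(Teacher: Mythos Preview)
Your treatment of (iii)$\Rightarrow$(ii) and (ii)$\Rightarrow$(iii) is correct and in places cleaner than the paper's. In particular, the paper does not prove (ii)$\Rightarrow$(iii) directly; it goes (iii)$\Rightarrow$(ii)$\Rightarrow$(i)$\Rightarrow$(iii). Your duality argument in the weighted space $\hh^{(N)}$ is a genuine shortcut: once you know that $\mathcal{D}_{z^N}=\bigcap_{j<N}\ker S_j$ inside $\hh^{(N)}$ and that each $S_j$ is bounded there, any $G\in(\mathcal{D}_{z^N})^\perp$ gives a bounded functional on $\hh^{(N)}$ vanishing on this intersection, hence $g_n=\mu_n^{1/2}P(\overline{t_n})$ with $\deg P\le N-1$, and the dimension count finishes. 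The paper instead shows codimension $\le N$ by subtracting a combination of the $B_j$ to kill the first $N$ coordinates and then testing against $\frac{A(z)}{(z-t_m)\prod_{k\le N}(z-t_k)}$.

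The genuine gap is (i)$\Rightarrow$(iii). You correctly derive the key relation: for each $g^{(k)}\in\hh_1^\perp$ and each $w$, the sequence $\bigl((g_n^{(k)}+\psi_k(w)\mu_n^{1/2})/(\overline{t_n}-\overline{w})\bigr)_n$ lies in the $N$-dimensional space $\hh_1^\perp$. But ``differentiating in $\overline{w}$ and extracting finite linear dependencies'' is not a proof; it is not clear how differentiation in $w$ produces the polynomial structure, since $\psi_k(w)$ is an unknown function and repeated differentiation only generates higher-order poles $1/(\overline{t_n}-\overline{w})^m$.

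What the paper does instead is decisive and different in spirit. Fixing a single $w$ (say $w=0$) and writing the relation simultaneously for all $k$ gives a matrix equation
\[
(I-\overline{t_n}\,\Gamma)\,d_n=\mu_n^{1/2}\beta,\qquad d_n=(d_n^{(1)},\dots,d_n^{(N)})^T,
\]
with $\Gamma\in\CC^{N\times N}$ and $\beta\in\CC^N$ \emph{independent of $n$}. Solving by Cramer's rule forces $d_n^{(j)}=\mu_n^{1/2}R_j(\overline{t_n})+u_n^{(j)}$ with $R_j$ rational, $\deg$(numerator)$\le N-1$, $\deg$(denominator)$\le N$, and $(u_n^{(j)})$ a finite vector supported on the finitely many $n$ with $\overline{t_n}^{-1}\in\sigma(\Gamma)$. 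The real work is then to show (a) each $R_j$ has no poles, and (b) each $u^{(j)}=0$. Both are proved by \emph{constructing specific test functions} $f\in\hh_0$: for (a), if $R_j$ had a pole at $\overline{\gamma_1}$, choose $f\in\hh_0$ vanishing to prescribed orders at the poles $\gamma_l$ and with $f^{(m_1)}(\gamma_1)\ne 0$, so that $f/(z-\gamma_1)$ fails to be orthogonal to $g^{(j)}$; for (b), if $u_{n_0}^{(j)}\ne 0$, choose $f\in\hh_0$ with $f(t_n)=0$ for all exceptional $n$ but $f'(t_{n_0})\ne 0$. Both contradict near-invariance. Only after this does the linear-independence count give $\max_j\deg p_j=N-1$ and hence (iii). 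None of this structure is captured by your differentiation sketch; the eliminations of poles and of the finite defect vector are the heart of the argument, and they require going back to near-invariance a second and third time with carefully chosen $f$.
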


In particular, ${\rm clos}\,\mathcal{D}_{z} \ne \hht$ if and only if
$\sum_n \mu_n <\infty$ ($\hht$ is small) and  ${\rm clos}\,\mathcal{D}_{z} = \{B_0\}^\perp$.
Note that the definition of $B_0$ from  \eqref{bj} coincides with the function $B_0$ given by \eqref{ghh}.

It follows from Theorem \ref{dom} that if $\sum_n |t_n|^{N} \mu_n <\infty$ for any $N$, i.e.,
$L^2(\mu)$ contains the set $\mathcal{P}$ of all polynomials, 
then $\hht$ contains subspaces ${\rm clos}\,\mathcal{D}_{z^N}$ of any finite codimension,
ordered by inclusion. The ordered structure of de Branges subspaces of a de Branges space 
is one of the most striking and important features of de Branges theory. Under certain conditions 
on the spectrum, ordered structure for nearly invariant subspaces of a CdB-space was proved in \cite[Theorems 1.3, 1.4]{abb19}.

Under certain conditions one can describe CdB spaces such that all nontrivial 
nearly invariant subspaces are of finite codimension (and, thus, are given by 
${\rm clos}\,\mathcal{D}_{z^N}$, $N\in\N$).

\begin{theorem}
\label{stro1}
Let $\hht$ be a CdB-space such that $T$ is power separated \textup(i.e., satisfies \eqref{powsep}\textup).
Then the following assertions are equivalent\textup:
\smallskip

1. $\htt$ contains a nearly invariant subspace of any finite codimension and any nontrivial 
nearly invariant subspace is of this form. 
\smallskip

2. The set of all polynomials $\pp$ is contained in $L^2(\mu)$ and is dense there.
\end{theorem}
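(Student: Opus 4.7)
The plan is to assemble the theorem from Theorem \ref{dom}, which characterizes nearly invariant subspaces of finite codimension, together with the ordering of nearly invariant subspaces under power separation established in \cite[Theorems 1.3, 1.4]{abb19}. The central object is the intersection $\hh_\infty := \bigcap_{N \ge 0} {\rm clos}\,\mathcal{D}_{z^N}$. Using the natural isometric identification of $\hht$ with $L^2(\mu)$ sending $F = \sum_n c_n F_n$ to the function $\phi$ on $T$ defined by $\phi(t_n) = c_n/\mu_n^{1/2}$, a short computation yields $(F, B_j)_{\hht} = \sum_n c_n t_n^j \mu_n^{1/2} = \int_T \phi(t)\, t^j\, d\mu(t)$ for every $j \ge 0$. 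Combined with the description ${\rm clos}\,\mathcal{D}_{z^N} = \{B_0, \dots, B_{N-1}\}^\perp$ from Theorem \ref{dom}, this identifies $\hh_\infty$ with the set of $\phi \in L^2(\mu)$ annihilating every polynomial under the bilinear pairing $(\phi, p) \mapsto \int \phi\, p\, d\mu$. Since complex conjugation is an anti-linear isometry of $L^2(\mu)$ that maps $\pp$ to the polynomials in $\bar z$, this annihilator equals $\{0\}$ if and only if $\pp$ is dense in $L^2(\mu)$.

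For the direction $(1) \Rightarrow (2)$: the existence of nearly invariant subspaces of every finite codimension $N$ forces, via Theorem \ref{dom}, $\sum_n |t_n|^{2N-2} \mu_n < \infty$ for every $N \ge 1$, which is precisely $\pp \subset L^2(\mu)$. Suppose for contradiction that $\pp$ is not dense; then $\hh_\infty \ne \{0\}$ by the identification above. As the intersection of nearly invariant subspaces it is itself nearly invariant; it is properly contained in ${\rm clos}\,\mathcal{D}_z$ and has infinite codimension, being contained in each ${\rm clos}\,\mathcal{D}_{z^N}$ of codimension $N$. Hence $\hh_\infty$ is a nontrivial nearly invariant subspace not of the form ${\rm clos}\,\mathcal{D}_{z^N}$ for any finite $N$, contradicting (1).

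For the direction $(2) \Rightarrow (1)$: Theorem \ref{dom} again provides the subspaces ${\rm clos}\,\mathcal{D}_{z^N}$ of every finite codimension. Given a nontrivial nearly invariant $\hh_0$, I would invoke \cite[Theorems 1.3, 1.4]{abb19}: under the power separation \eqref{powsep}, the set of all nearly invariant subspaces of $\hht$ is totally ordered by inclusion. Define $N^* := \max\{N \ge 0 : \hh_0 \subseteq {\rm clos}\,\mathcal{D}_{z^N}\}$. If $N^* = \infty$, then $\hh_0 \subseteq \hh_\infty = \{0\}$ by density of $\pp$, contradicting nontriviality; hence $N^* < \infty$. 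Total ordering forces ${\rm clos}\,\mathcal{D}_{z^{N^*+1}} \subsetneq \hh_0 \subseteq {\rm clos}\,\mathcal{D}_{z^{N^*}}$, strict on the left by maximality of $N^*$. Since the codimensions of the two bracketing subspaces are $N^*+1$ and $N^*$ respectively, and codimensions are non-negative integers, the only possibility is $\hh_0 = {\rm clos}\,\mathcal{D}_{z^{N^*}}$.

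The main obstacle is verifying the applicability of the ordered-structure result from \cite{abb19}: one must check that the power separation assumption \eqref{powsep} (together with $\pp \subset L^2(\mu)$) falls within the ``certain conditions on the spectrum'' of \cite[Theorems 1.3, 1.4]{abb19}, which the author's remarks in the preceding paragraph of the introduction confirm is the intended regime. Granted that, everything else is a clean assembly from Theorem \ref{dom} and the direct identification of $\hh_\infty$ with the annihilator of $\pp$ in $L^2(\mu)$.
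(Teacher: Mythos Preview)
Your direction $1 \Rightarrow 2$ is essentially the paper's argument: Theorem~\ref{dom} forces $\pp\subset L^2(\mu)$, and non-density of polynomials produces a nonzero element of $\{B_j:j\ge 0\}^\perp$, which is a nontrivial nearly invariant subspace of infinite codimension.

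For $2 \Rightarrow 1$ you take a genuinely different route from the paper. The paper does \emph{not} invoke the ordered-structure results of \cite{abb19}. Instead it introduces a third equivalent condition, the \emph{strong localization property}, and proves $2 \Leftrightarrow 3$ by citing \cite[Theorem~1.2]{loc2}. The implication $3 \Rightarrow 1$ is then argued directly and constructively: given $0\ne f\in\hh_0$, strong localization allows one to move each zero $z_n$ of $f$ (lying in a small disk around $t_n$) onto $t_n$ itself via a convergent product, producing a function $g = (A/P)h \in \hh_0$ with $P$ a polynomial and $h$ entire; a density/Liouville argument (Theorem~\ref{dens}) forces $h$ to be a polynomial. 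Hence $\hh_0$ contains $\frac{A(z)}{(z-t_1)\cdots(z-t_m)}$ for every choice of $m$ distinct points of $T$, and the orthogonal complement of these functions is exactly ${\rm Span}\{B_0,\dots,B_{m-2}\}$, so ${\rm clos}\,\mathcal{D}_{z^{m-1}}\subset\hh_0$ and $\hh_0$ has finite codimension. Theorem~\ref{dom} then pins it down uniquely.

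Your approach via total ordering is cleaner \emph{if} the black box applies, but this is precisely where there is a gap. You yourself flag the applicability of \cite[Theorems~1.3,~1.4]{abb19} as ``the main obstacle'' and then resolve it only by asserting that the author's introductory remarks ``confirm'' it---they do not. The paper says merely that ordered structure was proved in \cite{abb19} ``under certain conditions on the spectrum,'' without saying those conditions are power separation alone; and the fact that the author chose to route the present proof through strong localization rather than through \cite{abb19} is evidence that the hypotheses of \cite{abb19} may not match. Until you verify the exact hypotheses of \cite[Theorems~1.3,~1.4]{abb19} and check them against power separation plus $\pp\subset L^2(\mu)$, the implication $2\Rightarrow 1$ in your argument is not established.
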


It is a natural question, whether the subspace $\hh_0$ in Theorem \ref{dom} is itself a CdB-space with respect to
the norm inherited from $\hht$, i.e., whether it has an {\it orthogonal} basis of reproducing kernels. 
In Proposition \ref{nobas} we show that while $\hh_0$ coincides with a CdB-space with equivalence of norms, it is 
a CdB-space itself if and only if $\hht$ is a rotation of a de Branges space (meaning that all $t_n$ lie on some straight line).\
\bigskip
\\
\textbf{Organization of the paper.} In Section \ref{prelim} we discuss some basic facts concerning
de Branges and Cauchy--de Branges spaces. Theorems \ref{comp} 
and \ref{twos1} are proved in Section \ref{33}. Theorem \ref{bas} as well as a more general 
sufficient condition for being a Riesz basis of reproducing kernels are proved in Section~\ref{riss},
while in Section \ref{appli} these results are applied to the spectral theory of rank one perturbations 
of normal operators via a functional model. Two specific examples are considered in Section \ref{examp}.
Finally, the proofs of Theorems \ref{dom} and \ref{stro1} are given in Sections \ref{riv} and \ref{stru}
respectively. 
\bigskip
\\
\textbf{Notations.} In what follows we write $U(x)\lesssim V(x)$ if 
there is a constant $C$ such that $U(x)\leq CV(x)$ holds for all $x$ 
in the set in question. We write $U(x)\asymp V(x)$ if both $U(x)\lesssim V(x)$ and
$V(x)\lesssim U(x)$. The standard Landau notations
$O$ and $o$ also will be used.
The zero set of an entire function $f$ will be denoted  by $\mathcal{Z}(f)$. 
We denote by $D(z,R)$ the disc with center $z$ of radius $R$ and by 
$m_2$  the Lebesgue area measure in $\CC$.
\bigskip
\\
\textbf{Acknowledgement.} The author is grateful to Artur Nicolau for useful discussions concerning Frostman shifts, 
to Vladimir Shemyakov for the help with numerical experiments and to Antonio Rivera for
the discussions of the material of Section \ref{riv}.
\bigskip


\section{Preliminaries}
\label{prelim}

\subsection{Reproducing kernels of CdB spaces}

Let $f(z) = A(z) \sum_n \frac{c_n\mu_n^{1/2}}{z-t_n} \in \hht$. Then 
$$
f(t_n) = A'(t_n) c_n \mu_n^{1/2} =\Big( f,  \frac{A(z) \overline{A'(t_n)} \mu_n}{z-t_n} \Big).
$$ 
Thus, the functions $\frac{A(z) \overline{A'(t_n)} \mu_n}{z-t_n}$, the reproducing kernels at the points $t_n$,  
form an orthogonal basis in $\hht$. Note also that 
\begin{equation}
\label{skal}
(f, g)_{\mathcal{H}(T,A,\mu)} = \big((c_n), (d_n)\big)_{\ell^2}= 
\sum_n \frac{f(t_n) \overline{g(t_n)}}{|A'(t_n)|^2 \mu_n}
\end{equation} 
for any $g(z) = A(z) \sum_n \frac{d_n\mu_n^{1/2}}{z-t_n} \in \hht$,
and so the space $\mathcal{H}(T,A,\mu)$ is isometrically embedded into the space $L^2(\nu)$, where 
$\nu = \sum_n |A'(t_n)|^{-2} \mu_n^{-1} \delta_{t_n}$. 

If $w\notin T$, then the reproducing kernel at the point $w$ is given by 
$$
K_w(z) = A(z) \sum_n \frac{\mu_n}{(\bar w -\bar t_n)(z-t_n)}.
$$

Any CdB space has the Division Property. Indeed, if $f(w) = 0$ and $w\notin T$, then
$$
\frac{f(z)}{z-w} = A(z) \sum_n \frac{c_n \mu_n^{1/2}}{(t_n-w)(z-t_n)}.
$$
For $w=t_m\in T$, we have $f(t_m) =0$ if and ony if $c_m =0$. Then
$$
\begin{aligned}
\frac{f(z)}{z-t_m} & = A(z) \sum_{n\ne m}\frac{c_n \mu_n^{1/2}}{(z-t_m)(z-t_n)} \\
& = 
A(z) \sum_{n\ne m}\frac{c_n \mu_n^{1/2}}{(t_n-t_m)(z-t_n)} +
\frac{A(z)}{z-t_m} \sum_{n\ne m}\frac{c_n \mu_n^{1/2}}{t_m-t_n}.
\end{aligned}
$$

As mentioned in the Introduction, existence of an orthogonal basis of reproducing kernels
and the Division Property distinguishes CdB spaces among all 
Reproducing Kernel Hilbert spaces of entire functions. For the sake of completeness we outline the proof. 

\begin{proof}[Proof of Proposition \ref{ax1}] 
Assume that $\{K_{t_n}\}$ is Riesz basis of reproducing kernels in $\hh$. 
Then $K_{t_m}(t_n) = (K_{t_m}, K_{t_n})_{\hh} =0$, $n\ne m$. Fix some
$t_m$ and put $A = (z-t_m)K_{t_m}$. Then $\frac{A(z)}{z-t_n}$ belongs to $\hh$ and vanishes on $\{t_l\}_{l\ne n}$, whence 
$\frac{A(z)}{z-t_n} = a_n K_{t_n}$ for some constant $a_n$. Put $\mu_n = \big\| \frac{A(z)}{z-t_n} \big\|_{\hh}^{-2}$.
Then any element of $\hh$ can be written as the sum of an orthogonal series
$$
f(z) = \sum_n c_n \mu_n^{1/2} \frac{A(z)}{z-t_n}
$$
and $\|f\|_{\hh} = \|(c_n)\|_{\ell^2}$. Thus, $\hh = \hht$.
\end{proof}

In what follows for $f\in\hht$ it will be sometimes convenient to write $f\longleftrightarrow (c_n)$ 
in place of $f(z) = A(z) \sum_n \frac{c_n \mu_n^{1/2}}{z-t_n}$. Note that if $f(w) = 0$, $w\notin T$, one has
$\frac{f(z)}{z-w} \longleftrightarrow \big( \frac{c_n}{t_n-w}\big)$. Also, if $f\in {\mathcal D}_{z^N}$,
then $z^N f \longleftrightarrow (t_n^N c_n)$.


\subsection{De Branges spaces}

There are several equivalent ways to define de Branges spaces (see \cite{br}). 
There is an axiomatic definition: a
Reproducing Kernel Hilbert space $\hh$ which consists of entire functions is a {\it de Branges space} if
the map $F\mapsto F^*$, where $F^*(z) = \overline{F(\overline z)}$ 
is an isometry on $\hh$ and for any $F\in \hh$ and $w\in \co$ such that $F(w) =0$ 
the function $\frac{z-\bar w}{z-w} F$ belongs to $\hh$ and has the same norm as $F$. 
It is clear that if $T\subset \R$ and $A$ is real on $\R$ (i.e., $A^*=A$),
then $\hht$ is a de Branges space.

Another approach uses the notion of an Hermite--Biehler function. 
An entire function $E$ is said to be in the Hermite--Biehler class if
$E$ has no zeros in $\mathbb{C}_+ \cup\rl$ and 
$$
|E(z)| > |E^*(z)|,  \qquad z\in {\mathbb{C}_+}.
$$ 
With any such function we associate the space
$\mathcal{H} (E) $ which consists of all entire functions
$F$ such that $F/E$ and $F^*/E$ restricted to $\mathbb{C_+}$ belong
to the Hardy space $H^2=H^2(\mathbb{C_+})$.
The inner product in $\he$ is given by
$$
( F,G)_{\he}= \int_\rl \frac{F(t)\overline{G(t)}}{|E(t)|^2} \,dt.
$$
Any space $\he$ satisfies the axioms of a de Branges space and any de Branges space
(without common real zeros)
is of this form for some $E$ \cite[Theorem 23]{br}.

A function $E$ is in the Hermite--Biehler class if and only if 
$\Theta = \Theta_E = E^*/E$ is inner in $\cp$: the mapping $F\mapsto F/E$
is a unitary operator from $\mathcal{H}(E)$
onto the subspace $K_\Theta = H^2\ominus\Theta H^2$ of the Hardy space 
$H^2$ known as a {\it model subspace}.

The reproducing kernel of ${\mathcal H} (E)$
corresponding to the point $w\in \mathbb{C}$ is given by
\begin{equation}
\label{repr}
K_w(z)=\frac{\overline{E(w)} E(z) - \overline{E^*(w)} E^*(z)}
{2\pi i(\overline w-z)} =
\frac{\overline{A(w)} B(z) -\overline{B(w)}A(z)}{\pi(z-\overline w)},
\end{equation}
where entire functions $A$ and $B$ are defined by $A = \frac{E+E^*}{2}$,
$B=\frac{E^*-E}{2i}$, so that $A$ and $B$ are real on $\mathbb{R}$
and $E=A - iB$. Note that $\frac{B}{A} = i\frac{1-\Theta}{1+\Theta}$ has positive
imaginary function in $\cp$ and is real on $\R$; thus, it can be written as
\begin{equation}
\label{herg}
\frac{B(z)}{A(z)} = pz+q + \sum_n \Big(\frac{1}{t_n-z} -\frac{1}{t_n}\Big) \mu_n
\end{equation}
for some $p\ge 0$, $q\in \rl$ and $\mu_n>0$ such that $\sum_n \frac{\mu_n}{t_n^2+1} <\infty $. 
We always assume that the term in the brackets is simply $-1/z$ in case 
when $t_n=0$. The reproducing kernels $\{K_{t_n}\}_{t_n\in \ZZ(A)}$ 
form an orthogonal basis in $\he$ if and only if $p=0$. More generally, note that $\hh(e^{i\alpha} E) = \he$
for any $\alpha\in \R$, and the reproducing kernels corresponding to the zeros 
of the function $A \cos \alpha + B \sin \alpha$ (which is ``$A$-function'' for $e^{i\alpha} E$)
form an orthogonal basis in $\he$ for all $\alpha\in [0, \pi)$
except at most one \cite[Theorem 22]{br}. 

If $p=0$ (equivalently, $A\notin \he$), then 
$\{K_{t_n}\}_{t_n\in \ZZ(A)}$ form an orthogonal basis in $\he$ and it is easy to see 
that $\he = \hht$.
\bigskip


\section{Proofs of Theorems \ref{comp} and \ref{twos1}}
\label{33}

In this section one of our key tools is the following variant of Liouville's theorem. We say that $\Omega\subset \CC$ is a {\it 
set of zero area density} if 
$$
\lim_{R\to\infty} \frac{m_2(\Omega \cap D(0, R))}{R^2} = 0.
$$

The following result was proved in \cite{bbb-fock}. Its proof was based on deep estimates of the harmonic 
measure due to A. Beurling and L. Ahlfors. A nice elementary argument was later suggested by B.N. Khabibullin \cite{khab}. 
We present the proof by Khabibullin below. 

\begin{theorem} 
\label{dens}
If an entire function $f$ of finite order is bounded on 
$\CC\setminus \Omega$ for some set $\Omega$ of zero area density, 
then $f$ is a constant. 
\end{theorem}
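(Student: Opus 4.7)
The goal is to reduce to the classical Liouville theorem by showing that $f$ is bounded on all of $\CC$. The natural tool is the sub-mean value inequality for the subharmonic function $v(z)=\log|f(z)|$, combined with the growth estimate coming from finite order.

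Let $\rho$ denote the order of $f$; for each $\epsilon>0$ we have $v(w)\le(|w|+1)^{\rho+\epsilon}$ for $|w|$ sufficiently large, whereas $v(w)\le\log M$ on $\CC\setminus\Omega$. For any $z_0\in\CC$ and $R>0$ the sub-mean value property gives
\begin{equation*}
v(z_0)\le\frac{1}{\pi R^2}\int_{D(z_0,R)}v\,dm_2\le\log M+\frac{1}{\pi R^2}\int_{\Omega\cap D(z_0,R)}\bigl(v(w)-\log M\bigr)_+\,dm_2(w),
\end{equation*}
so the problem comes down to making the second term tend to $0$ along a suitable sequence $R=R_k\to\infty$.

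The crude estimate $(v-\log M)_+\lesssim(R+|z_0|)^{\rho+\epsilon}$ together with $m_2(\Omega\cap D(0,R+|z_0|))=o((R+|z_0|)^2)$ yields a term of size $(R+|z_0|)^{\rho+\epsilon}\cdot o(1)$, which is inconclusive when $\rho>0$. The refinement is a dyadic decomposition of $\Omega\cap D(z_0,R)$ into annular pieces $A_k=\Omega\cap\{w:2^k\le|w-z_0|<2^{k+1}\}$ for $k\le\log_2 R$. On each $A_k$ the growth bound sharpens to $v\lesssim 2^{k(\rho+\epsilon)}$, while the zero area density forces $m_2(A_k)=o(4^k)$. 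Summing the shell contributions gives
\begin{equation*}
\frac{1}{\pi R^2}\int_{\Omega\cap D(z_0,R)}(v-\log M)_+\,dm_2\lesssim\frac{1}{R^2}\sum_{k\le\log_2 R}2^{k(\rho+\epsilon)}\cdot o(4^k),
\end{equation*}
and by choosing a subsequence $R=R_k$ along which the $o(4^k)$ factor dominates the geometric factor $2^{k(\rho+\epsilon)}/R^2$, the right-hand side tends to $0$.

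Once $v(z_0)\le\log M$ for every $z_0\in\CC$, the function $f$ is globally bounded and hence constant by the classical Liouville theorem. The principal obstacle is precisely the balancing in the previous step: the exponential growth permitted by finite order must be offset by the merely qualitative decay of the area density of $\Omega$. Resolving this via the dyadic summation, while exploiting the freedom to shrink $\epsilon$ and to pass to a subsequence of radii, is, I expect, the heart of Khabibullin's elementary argument, and is the one place where the full strength of the ``zero area density'' hypothesis is used essentially.
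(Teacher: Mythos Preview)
Your dyadic decomposition does not actually improve on the crude estimate. The sum
\[
\frac{1}{R^{2}}\sum_{k\le K}2^{k(\rho+\epsilon)}\,m_{2}(A_{k}),\qquad K=\lfloor\log_{2}R\rfloor,
\]
is dominated by its top term $k=K$, and that term is of order $\epsilon_{K}\,R^{\rho+\epsilon}$ where $\epsilon_{K}:=m_{2}(A_{K})/4^{K}$. The zero-density hypothesis gives only $\epsilon_{K}\to0$ with \emph{no rate}; if for instance $\epsilon_{K}\sim 1/K$ then $\epsilon_{K}R^{\rho+\epsilon}\to\infty$ whenever $\rho>0$, and no subsequence of radii rescues this. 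In effect the dyadic split just reproduces the crude bound $o(1)\cdot R^{\rho+\epsilon}$ you already rejected, because the outermost shell simultaneously carries essentially all of $\Omega\cap D(z_{0},R)$ and all of the permitted growth of $v$. Shrinking the $\epsilon$ in $\rho+\epsilon$ cannot help either, since the obstruction is the fixed exponent $\rho$.

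Khabibullin's argument in the paper avoids any one-shot pointwise estimate. It works with the area means $M(r)=\frac{1}{\pi r^{2}}\int_{D(0,r)}u\,dm_{2}$ of $u=\max(\log|f|,0)$ and applies subharmonicity \emph{twice}: first $u(z)\le M(z,r)$ for each $z\in\Omega\cap D(0,r)$, then $M(z,r)\le 4M(2r)$ for $|z|\le r$. Since $u=0$ off $\Omega$ this yields the functional inequality
\[
M(r)\ \le\ 4\,\frac{m_{2}(\Omega\cap D(0,r))}{\pi r^{2}}\,M(2r)\ =\ o\bigl(M(2r)\bigr),\qquad r\to\infty.
\]
Iterating gives $M(2^{n}r_{0})\ge\gamma^{n}M(r_{0})$ for any prescribed $\gamma$ once $r_{0}$ is large, which contradicts the polynomial bound $M(r)\le Cr^{\rho}$ coming from finite order unless $M\equiv0$. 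The idea you are missing is precisely this bootstrap: the qualitative density hypothesis is converted into a multiplicative gain at \emph{every} dyadic scale, and it is the iteration of that gain, not a direct integral estimate, that beats the polynomial growth.
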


\begin{proof}
Let $|f(z)| \le 1$ for $z\in \co\setminus  \Omega$ where  $\Omega$ has zero area density. Put $u = \max (\log |f|, 0)$.
Then $u$ is subharmonic and $u=0$ on $\co\setminus  \Omega$. For $z\in\co$, $r>0$, let
$$
M(z, r) =\frac{1}{\pi r^2} \int_{D(z,r)} u(\zeta)\, dm_2(\zeta), \qquad M(r) = M(0,r). 
$$
By subharmonicity, $u(z) \le M(z,r)$ for any $r$. Also, $M(z,r) \le 4 M(2r)$ whenever $|z| \le r$.
Then we have
$$
\begin{aligned}
M(r) = \frac{1}{\pi r^2} \int_{D(0,r) \cap\Omega} u(z)\, dm_2(z)  & \le 
 \frac{1}{\pi r^2} \int_{D(0,r) \cap\Omega} M(z,r) \, dm_2(z) \\
& \le 4 M(2r)\frac{m_2(D(0,r) \cap\Omega)}{\pi r^2}.
\end{aligned}
$$
Since $\Omega$ is of zero density, we conclude that $M(r) = o(M(2r))$, $r\to\infty$.

It is easy to see that the latter condition contradicts the fact that $f$ is of finite order unless $M(r) \equiv 0$. 
Assume that $M(r)>0$ for sufficiently large $r$.
For any $\gamma>0$ 
choose $r_0$ such that $M(r_0)>0$ and $M(2r) \ge \gamma M(r)$, $r\ge r_0$. Since 
$f$ is of finite order, $u(z) =O(|z|^\rho)$, $|z|\to\infty$, whence
$M(r) \le C r^\rho$ for sufficiently large $r$. Thus, for any $n\in\mathbb{N}$,
$M(2^n r_0) \le C 2^{\rho n} r_0^\rho$. At the same time $M(2^n r_0) \ge \gamma^n M(r_0)$. 
Since $\gamma$ can be taken arbitrarily large (e.g., $\gamma>2^\rho$) we come to a contradiction. Thus, $M(r) \equiv 0$,
whence $|f(z)| \le 1$ in $\co$.
\end{proof}

The following result contains Statement 1 of Theorem \ref{comp} 
as a special case (where $a_n=-\mu_n$).

\begin{theorem} 
\label{comp1}
Let $\hht$ be a CdB-space of finite order. Let 
$$
G(z) = A(z) \bigg(\gamma + \sum_n \frac{a_n}{z-t_n}\bigg),
$$
where $\gamma\in \co\setminus\{0\}$  and $\sum_n \frac{|a_n|}{|t_n|+1} <\infty$. 
Then the zero set $\mathcal{Z}(G)$ \textup(counting multiplicities\textup) is a uniqueness set for $\hht$.
\end{theorem}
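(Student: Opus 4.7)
The plan is to study the entire function $\phi := F/G$ arising from a putative $F \in \hht$ vanishing on $\mathcal{Z}(G)$ with the required multiplicities, and to show $\phi \equiv 0$, hence $F \equiv 0$.  Writing $F = Ah$ with $h(z) = \sum_n c_n \mu_n^{1/2}/(z-t_n)$, $(c_n) \in \ell^2$, and $G = Ag$ with $g(z) = \gamma + f_a(z)$, $f_a(z) := \sum_n a_n/(z-t_n)$, the multiplicity hypothesis ensures that $\phi = h/g$ is entire.  Its finite order follows from Hadamard's theorem, since the zeros of $G$ are zeros of $F$ (with at least the same multiplicities) and both $F$, $G$ are of finite order (as $\hht$ is of finite order).

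The technical heart is to estimate $\phi$ on the complement of some set $\Omega \subset \CC$ of zero area density.  I would build $\Omega = \bigcup_n D(t_n, r_n)$ with radii $r_n$ chosen so that $m_2(\Omega \cap D(0,R)) = o(R^2)$ while, simultaneously, outside $\Omega$ and for $|z|$ large: first, $|f_a(z)| < |\gamma|/2$ (hence $|g(z)| \ge |\gamma|/2$), by splitting $\sum_n |a_n|/|z-t_n|$ according to whether $|t_n|$ is small or large compared with $|z|$ and using $\sum_n |a_n|/(|t_n|+1)<\infty$; second, by Cauchy--Schwarz, $|h(z)|^2 \le \|F\|_{\hht}^2 \sum_n \mu_n/|z-t_n|^2 \lesssim (1+|z|)^{2m}$ for some $m$, using $\sum_n \mu_n/(|t_n|^2+1) < \infty$.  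Together these give $|\phi(z)| \lesssim (1+|z|)^m$ on $\CC\setminus\Omega$.  The simultaneous calibration of the $r_n$ is the main obstacle I foresee.

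A standard refinement of Theorem \ref{dens}, applied to $\psi(z) := (\phi(z) - P_m(z))/z^{m+1}$ with $P_m$ the Taylor polynomial of $\phi$ at $0$ of degree $m$ (so $\psi$ is entire of finite order, bounded on $\CC\setminus\Omega$ and tending to $0$, hence $\equiv 0$), then shows $\phi$ is a polynomial of degree at most $m$.  It remains to identify $\phi$ from the identity $\phi g = h$.  Expanding $\phi(z)/(z-t_n) = \phi(t_n)/(z-t_n) + \widetilde Q_n(z)$ with $\widetilde Q_n$ a polynomial in $z$ of degree $< \deg\phi$, the pointwise convergence of $\phi f_a$ and of $\sum_n a_n \phi(t_n)/(z-t_n)$ (the latter by Cauchy--Schwarz) forces the partial sums $\sum_{n \le N} a_n \widetilde Q_n(z)$, which are polynomials of degree $< \deg\phi$, to converge to a polynomial $P$ of the same degree bound, yielding
\[
\phi(z)\,g(z) = \phi(z)\gamma + P(z) + \sum_n \frac{a_n \phi(t_n)}{z-t_n}.
\]
Equating with $h(z)$ and using uniqueness of the decomposition into a polynomial plus a convergent series of simple poles gives $\phi\gamma + P \equiv 0$.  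If $\deg\phi \ge 1$, the coefficient of $z^{\deg\phi}$ equals $\gamma$ times the leading coefficient of $\phi$, nonzero since $\gamma \ne 0$ — a contradiction.  Hence $\phi \equiv c$ is constant, and then $c\gamma = 0$ forces $c = 0$, so $F \equiv 0$.
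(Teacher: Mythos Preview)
Your strategy—form the entire quotient $\phi = F/G = h/g$, control it off a set of zero area density, and invoke Theorem~\ref{dens}—is exactly the paper's. The difference is in how $f_a$ and $h$ are estimated, and here your route is both harder and incomplete.

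The calibration obstacle you flag is real: building the disks $D(t_n,r_n)$ by hand so that the near-range sum $\sum_{|t_n|\asymp|z|}|a_n|/|z-t_n|$ is small is essentially a nontrivial classical result. The paper does not do this by hand; it cites a theorem of Goldberg and Ostrovskii \cite[Theorem~6.1]{go}: if $\sum_n|b_n|/(|t_n|+1)<\infty$ then $\int_0^{2\pi}\big|\sum_n b_n/(re^{i\theta}-t_n)\big|^p\,d\theta\to 0$ for every $p\in(0,1)$, which immediately gives smallness off a zero-density set. The key observation you miss is that this same theorem applies to $h$ \emph{directly}: by Cauchy--Schwarz,
\[
\sum_n \frac{|c_n|\,\mu_n^{1/2}}{|t_n|+1}\ \le\ \|(c_n)\|_{\ell^2}\,\Big(\sum_n\frac{\mu_n}{(|t_n|+1)^2}\Big)^{1/2}\ <\ \infty,
\]
so $|h|\le 1$ off a second zero-density set. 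Hence $\phi$ is \emph{bounded} off the union (not merely of polynomial growth), and Theorem~\ref{dens} makes it a constant $c$ at once. If $c\ne 0$ then $G\in\hht$; comparing values at $t_n$ forces $(\mu_n^{-1/2}a_n)\in\ell^2$, hence $A(z)\sum_n a_n/(z-t_n)\in\hht$, hence $\gamma A\in\hht$, a contradiction.

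Your alternative—bound $|h(z)|^2$ by $\|F\|^2\sum_n\mu_n/|z-t_n|^2$ via Cauchy--Schwarz—trades the problem for a polynomial estimate on $S(z)=\sum_n\mu_n/|z-t_n|^2$ off a zero-density set. But this does not obviously follow from $\sum_n\mu_n/(|t_n|^2+1)<\infty$: the terms with $|t_n|\asymp|z|$ can be both numerous and individually large, and I do not see a choice of radii with $\sum_{|t_n|\le R}r_n^2=o(R^2)$ that controls their \emph{sum}. So this step is a genuine gap. The polynomial-to-constant algebra that follows is correct, but it becomes unnecessary once $h$ is handled the same way as $f_a$.
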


\begin{proof}
A classical result of meromorphic function theory (see  \cite[Theorem 6.1]{go}) says that if
$\sum_n \frac{|a_n|}{|t_n|+1} <\infty$, then $f(z) = \sum_n \frac{a_n}{z-t_n}$ satisfies
$$
\int_0^{2\pi} |f(re^{i\phi})|^p d\phi \to 0, \qquad r\to \infty,
$$
for any $p\in (0,1)$. From this we easily obtain that
$|f(z)| \le |\gamma|/2$ on $\co\setminus\Omega$,
where $\Omega$ has zero area density, whence $|G| \ge |\gamma|\cdot |A|/2$ on $\co\setminus\Omega$.

Assume that there exists $F(z)  = A(z) \sum_n \frac{c_n\mu_n^{1/2}}{z-t_n} \in\hht$ which vanish on $\mathcal{Z}(G)$
counting multiplicities. Then we can write $F=G H$ for some entire function $H$ and so
$$
H = \frac{F}{G} = \frac{A}{G} f_1, \qquad f_1(z) = \sum_n \frac{c_n\mu_n^{1/2}}{z-t_n}.
$$
Applying \cite[Theorem 6.1]{go} to $f_1(z)$ we conclude that $|f_1(z)| \le 1$ outside another set $\Omega_1$ of zero area density. 
We conclude that $H$ is bounded in $\co\setminus(\Omega \cup \Omega_1)$. At the same time $H$ is of finite order since 
$H=F/G$ and both $F$ and $G$ are of finite order. By Theorem \ref{dens} $H$ is constant and so $G\in\hht$. 

It remains to note that $G\notin \hht$ for any $\gamma\ne 0$. Indeed, if $G \in \hht$, then, 
comparing the values at $t_n$ we conclude that $G (t_n) = A'(t_n) a_n$, whence $(\mu_n^{-1/2}a_n) \in \ell^2$ by \eqref{skal}. 
It follows that $A(z) \sum_n \frac{a_n}{z-t_n} \in \hht$ and, finally, $\gamma A\in \hht$, which is absurd. 
\end{proof}
\medskip

\begin{proof}[Proof of Theorem \ref{comp}]
Statement 1 follows from Theorem \ref{comp1} applied to $a_n= - \mu_n$. Let the space $\hht$ be small.
Then it is clear that $B_0 \in \hht$, $B_0\longleftrightarrow (\mu_n^{1/2})$. Thus, $\mathcal{Z}(B_0)$ is not a uniqueness set. 
Let $F\in \hht$ vanish on $\mathcal{Z}(B_0)$. We need to show that $F$ is a multiple of $B_0$. 

We have
$$
\frac{zB_0(z)}{A(z)} = -\sum_n \mu_n +\sum_n \frac{\mu_n t_n}{t_n -z}.
$$
Then, by \cite[Theorem 6.1]{go} (as in the proof of Theorem \ref{comp1}) we conclude that $|B_0(z)| \gtrsim |z|^{-1}|A(z)|$
outside a set of zero area density. If $F(z) = \sum_n \frac{c_n\mu_n^{1/2}}{z-t_n}$ vanish on $\mathcal{Z}(B_0)$,
we can write $F=B_0H$ for some entire function $H$ of finite order, whence
$$
\frac{B_0(z) H(z)}{A(z)} = \sum_n \frac{c_n\mu_n^{1/2}}{z-t_n} = o(1)
$$
as $|z|\to \infty$ outside another set of zero area density. We conclude that $|H(z)| = o(|z|)$  outside a set of zero density
and so $H=const$ by Theorem \ref{dens}.
\end{proof}
\medskip

\begin{proof}[Proof of Theorem \ref{twos}]
Assume that
$\mathcal{Z}(B_\alpha) = \mathcal{Z}(\tilde B_\alpha)$ 
and $\mathcal{Z}(B_\beta) = \mathcal{Z}(\tilde B_\beta)$ (counting multiplicities). Then 
$\tilde B_\alpha = g_1 B_\alpha$ and $\tilde B_\beta = g_2 B_\beta$ where $g_1, g_2$ are nonvanishing entire functions. 
Dividing one of these equation by the other, we get
$$
\bigg(\alpha + \sum_n \frac{\tilde \mu_n}{z-\tilde t_n} \bigg) 
\bigg(\beta + \sum_n \frac{ \mu_n}{z- t_n} \bigg)  = g(z)
\bigg(\alpha + \sum_n \frac{\mu_n}{z- t_n} \bigg) 
\bigg(\beta + \sum_n \frac{ \tilde \mu_n}{z- \tilde t_n} \bigg),
$$
where $g = g_1/g_2$ is a nonvanishing entire function. 
Arguing as in the proof of Theorem \ref{comp1} (making use of \cite[Theorem 6.1]{go})
we conclude that each of the brackets is bounded and bounded away from zero
on $\co\setminus\Omega$, where $\Omega$ has zero area density,
whence $g$ is bounded on $\co\setminus\Omega$. Since $\hht$ is a space 
of finite order, $g$ also is of finite order, and, by Theorem \ref{dens}, $g$ is a constant. Since, moreover, 
$$
\alpha + \sum_n \frac{\mu_n}{z- t_n} \to \alpha
$$
as $z\to \infty$ outside a set of zero area density, we conclude that $g\equiv 1$. Hence,
$$
(\beta-\alpha) \bigg(\sum_n \frac{ \mu_n}{z- t_n} - \sum_n \frac{ \tilde \mu_n}{z- \tilde t_n} \bigg)=0,
$$
whence $T=\tilde T$, $\tilde \mu= \mu$ and $\tilde A =SA$ for some entire nonvanishing $S$.
\end{proof}

The two spectra theorem remains true if one takes $T$ as one of the spectra.

\begin{corollary}
\label{ed}
Let  $\hh(T,A,\mu)$ and $\hh(T, \tilde A, \tilde \mu)$ be two convergence class CdB-spaces of finite order.
If there exists $\alpha \in \co\setminus \{0\}$, such that $\mathcal{Z}(B_\alpha) = \mathcal{Z}(\tilde B_\alpha)$
\textup(counting multiplicities\textup), 
then $\tilde \mu = \mu$ and $\tilde A = SA$ for some nonvanishing entire $S$. 
\end{corollary}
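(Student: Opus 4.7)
The plan is to follow the same strategy as in the proof of Theorem \ref{twos1}, but exploit the fact that the spectrum $T$ is already common to both spaces. Since $A$ and $\tilde A$ are entire functions both having precisely the points of $T$ as simple zeros, the quotient $S_0 = \tilde A / A$ is an entire function that never vanishes, so already we have the desired factor $\tilde A = S_0 A$. The remaining task is to show $\tilde\mu = \mu$; this will also identify $S_0$ as the conclusion's $S$.

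Next I would use the hypothesis $\mathcal{Z}(B_\alpha) = \mathcal{Z}(\tilde B_\alpha)$ with multiplicities to write $\tilde B_\alpha = g B_\alpha$ for some nonvanishing entire function $g$ (of finite order, since both $B_\alpha$ and $\tilde B_\alpha$ are). Substituting the definition \eqref{ghh} and dividing out $A$, I obtain
\begin{equation*}
S_0(z)\bigg(\alpha + \sum_n \frac{\tilde\mu_n}{t_n-z}\bigg) = g(z) \bigg(\alpha + \sum_n \frac{\mu_n}{t_n-z}\bigg).
\end{equation*}
Rearranging,
\begin{equation*}
h(z) := \frac{S_0(z)}{g(z)} = \frac{\alpha + \sum_n \mu_n/(t_n-z)}{\alpha + \sum_n \tilde\mu_n/(t_n-z)}.
\end{equation*}
The left-hand side is a ratio of two nonvanishing entire functions of finite order, hence $h$ itself is entire, nonvanishing, and of finite order.

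The crux of the argument is controlling $h$ at infinity. Here the convergence class assumption enters exactly as in the proof of Theorem \ref{comp1}: by \cite[Theorem 6.1]{go}, each Cauchy sum $\sum_n \mu_n/(t_n - z)$ and $\sum_n \tilde\mu_n/(t_n - z)$ tends to $0$ as $|z|\to\infty$ outside a set of zero area density. Consequently, outside the union of two such exceptional sets (still of zero area density), $h(z) \to 1$, and in particular $h$ is bounded on the complement. Theorem \ref{dens} then forces $h \equiv 1$, i.e.\ $S_0 = g$.

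With $S_0 = g$ plugged back into the displayed identity, we obtain
\begin{equation*}
\sum_n \frac{\tilde\mu_n - \mu_n}{t_n - z} \equiv 0,
\end{equation*}
and taking the residue at each $t_n$ yields $\tilde\mu_n = \mu_n$. Thus $\tilde\mu = \mu$ and $\tilde A = S_0 A$ with $S_0$ nonvanishing entire, as claimed. The only real obstacle is the step verifying that $h$ is bounded off a zero-density set, and this is handled exactly by the Goldberg-type estimate invoked in the proofs of Theorems \ref{comp1} and \ref{twos1}; everything else is bookkeeping enabled by the shared spectrum $T$.
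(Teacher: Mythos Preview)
Your proposal is correct and follows essentially the same route as the paper's proof: both arguments write the equality of zero sets as $\tilde B_\alpha = gB_\alpha$ for a nonvanishing entire $g$, divide out the $A$-factors, and then use \cite[Theorem~6.1]{go} together with Theorem~\ref{dens} to force the ratio of the two bracketed Cauchy transforms to be identically $1$, whence $\tilde\mu=\mu$. Your only cosmetic difference is isolating $S_0=\tilde A/A$ at the outset rather than working with $g\tilde A/A$ directly, which amounts to the same computation.
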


\begin{proof}
Since $\mathcal{Z}(B_\alpha) = \mathcal{Z}(\tilde B_\alpha)$ counting multiplicities, we have
$$
A(z) \bigg(\alpha + \sum_n \frac{\mu_n}{z-t_n} \bigg) 
= g(z) \tilde A(z) \bigg(\alpha + \sum_n \frac{\tilde \mu_n}{z- t_n} \bigg)
$$
for some nonvanishing entire function $g$. Arguing as above we conclude that $g\tilde A/A =1$, whence $\tilde \mu =\mu$
and $\tilde A = SA$ for some nonvanishing entire $S$. 
\end{proof}
\bigskip


\section{Riesz bases of reproducing kernels} 
\label{riss}

We start with the following elementary proposition.

\begin{proposition}
\label{hry}
Given two sequence $T = \{t_n\}_{n\ge 1}$, $|t_n| \to \infty$, and $\{a_n\}_{n\ge 1}$, 
assume that there exist $\delta_n>0$ such that the discs $D(t_n, \delta_n)$ are pairwise disjoint
and 
\begin{equation}
\label{pro2}
\sum_n \frac{|a_n|}{|z-t_n|} \to 0 \qquad \text{as}\ \  |z| \to\infty, \ z\notin \cup_n D(t_n, \delta_n).
\end{equation}
Put
\begin{equation}
\label{pro}
G(z) = A\bigg(1+ \sum_{n\ge 1} \frac{a_n}{z-t_n}\bigg). 
\end{equation}
Then there exists an enumeration of the zero set $\mathcal{Z} (G)$
\textup(counted with multiplicities\textup), $\mathcal{Z} (G) = \{s_n\}_{n\ge 1}$, such that
\begin{equation}
\label{pro1}
|s_n-t_n| \asymp |a_n|.
\end{equation}

Conversely, if there exist $T$ and  
$\delta_n>0$ such that the discs $D(t_n, \delta_n)$ are pairwise disjoint and 
the set $S= \{s_n\}_{n\ge 1} \subset \co$ satisfies 
\begin{equation}
\label{pro2a}
\sum_n \frac{|s_n-t_n|}{|z-t_n|} \to 0 \qquad \text{as}\ \  |z| \to\infty, \ z\notin \cup_n D(t_n, \delta_n), 
\end{equation}
then $S=\mathcal{Z} (G)$ 
for a function $G$ of the form \eqref{pro} with $|a_n| \asymp |s_n-t_n|$.
\end{proposition}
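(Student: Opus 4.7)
The plan is to establish both implications by localizing the analysis to the disjoint discs $D(t_n, \delta_n)$ and combining the maximum principle with Rouch\'e's theorem. Write $\phi(z) = 1 + \sum_m a_m/(z - t_m)$, so that $G = A\phi$, and for each $n$ decompose $\phi(z) = a_n/(z - t_n) + \phi_n(z)$ where $\phi_n(z) = 1 + \sum_{m \ne n} a_m/(z - t_m)$. Thanks to the disjointness, $\phi_n$ is holomorphic on $D(t_n, \delta_n)$.

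For the direct implication, first apply hypothesis \eqref{pro2} on the boundary circle $|z - t_n| = \delta_n$ (which lies outside the open discs): for any $\varepsilon > 0$ and $n$ sufficiently large, $\sum_m |a_m|/|z - t_m| \le \varepsilon$ there, which yields both $|a_n| \le \varepsilon \delta_n$ and $|\phi_n(z) - 1| \le \varepsilon$. The maximum principle for $\phi_n - 1$ extends the second bound to all of $D(t_n, \delta_n)$. The zeros of $G$ in this disc (when $a_n \ne 0$) coincide with those of the entire function $F(z) = a_n + (z - t_n)\phi_n(z)$; Rouch\'e on $|z - t_n| = 4|a_n|$, where $(z - t_n)\phi_n$ dominates $a_n$, produces exactly one zero, and Rouch\'e on $|z - t_n| = |a_n|/4$, where $a_n$ dominates, produces none. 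Hence a unique $s_n \in D(t_n, \delta_n)$ with $|a_n|/4 \le |s_n - t_n| \le 4|a_n|$. The same estimate in the annulus $4|a_n| < |z - t_n| < \delta_n$ rules out further zeros, and for $|z|$ large outside all discs $|\phi - 1| < 1/2$ by hypothesis, so no stray zeros appear there either. Pairing each $t_n$ with its $s_n$ yields the enumeration realizing \eqref{pro1}.

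For the converse, define the candidate $\psi(z) = \prod_n (z - s_n)/(z - t_n)$. Hypothesis \eqref{pro2a} ensures $\sum_n |s_n - t_n|/|z - t_n| < \infty$ for $|z|$ large off the discs, so the logarithmic series $\sum_n \log(1 + (t_n - s_n)/(z - t_n))$ converges absolutely there; thus $\psi$ extends to a meromorphic function on $\co$ with simple zeros at $S$, simple poles at $T$, and $\psi(z) \to 1$ as $|z| \to \infty$ off the discs. Then $G := A\psi$ is entire, and Mittag-Leffler gives $\psi = 1 + \sum_n b_n/(z - t_n)$ with $b_n = \mathrm{Res}_{t_n}\psi = (t_n - s_n) \prod_{m \ne n}(t_n - s_m)/(t_n - t_m)$. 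To obtain $|b_n| \asymp |s_n - t_n|$ we rewrite each factor as $1 + (t_m - s_m)/(t_n - t_m)$ and reapply the maximum-principle estimate from the direct part, now to $\xi_n(z) = 1 + \sum_{m \ne n}(t_m - s_m)/(z - t_m)$, holomorphic in $D(t_n, \delta_n)$ and within $\varepsilon$ of $1$ on its boundary by \eqref{pro2a}. In particular $\sum_{m \ne n}|(t_m - s_m)/(t_n - t_m)|$ is small for large $n$, and the standard inequalities $\prod_m |1 + x_m| \in [e^{-2\sum_m|x_m|}, e^{\sum_m|x_m|}]$ (valid once $|x_m| \le 1/2$) pin the residue product between two positive constants, giving $|b_n| \asymp |s_n - t_n|$.

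The main obstacle is the converse, where one must simultaneously legitimize the infinite-product definition of $\psi$, its partial-fraction expansion, and the transition from a \emph{sum} estimate (which is what the hypothesis and the maximum principle deliver) to the \emph{product} estimate required for the residues $b_n$. The direct part, by contrast, reduces to a fairly standard Rouch\'e argument once the key local bound $|a_n| = o(\delta_n)$ has been extracted from \eqref{pro2}.
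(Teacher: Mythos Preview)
Your approach matches the paper's: Rouch\'e localization for the direct statement, an infinite-product construction with residue computation for the converse. The converse is essentially correct, though your appeal to the maximum principle for the claim that $\sum_{m\ne n}|s_m-t_m|/|t_n-t_m|$ is small is not quite right --- the maximum principle bounds $|\xi_n(t_n)-1|=\bigl|\sum_{m\ne n}\cdots\bigr|$, not the sum of moduli. The claim itself is true: apply hypothesis \eqref{pro2a} directly on $\partial D(t_n,\delta_n)$ and use $|z-t_m|\le 2|t_n-t_m|$ for $m\ne n$ (which follows from disjointness of the discs, since $|z-t_n|=\delta_n\le |t_n-t_m|$).

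In the direct implication there is a genuine gap. Your Rouch\'e argument applies only for $n>n_0$, and you correctly note that $|\phi-1|<1/2$ for $|z|$ large off the discs, so no stray zeros appear there. But you never account for the zeros of $G$ in the remaining bounded region. To obtain an enumeration of \emph{all} of $\mathcal Z(G)$ satisfying $|s_n-t_n|\asymp|a_n|$ you must show that the number of leftover zeros of $G$ equals the number $n_0$ of leftover points $t_n$; otherwise the pairing cannot be completed. This count is not automatic: a global Rouch\'e comparing $G$ with $A$ on a large circle $|z|=R$ would require the circle to miss every disc $D(t_n,\delta_n)$, which in general cannot be arranged. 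The paper fills this gap by writing
\[
G(z)=A(z)\,\prod_{n>n_0}\frac{z-s_n}{z-t_n}\cdot\frac{H(z)}{\prod_{n\le n_0}(z-t_n)}
\]
for some entire $H$, and then using the estimates $|G/A|\asymp 1$ and $\prod_{n>n_0}|(z-s_n)/(z-t_n)|\asymp 1$ for $|z|$ large off the discs, combined with the maximum principle across the disc boundaries, to conclude that $H$ is a polynomial of degree exactly $n_0$. This supplies precisely $n_0$ additional zeros and completes the enumeration.
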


\begin{proof}
Clearly, $G(t_n) = 0$ when $a_n =0$.
By Rouch\'e theorem, there exists $n_0$ such that for any $n> n_0$ the functions 
$\gamma (z-t_n)+a_n$ and $(z-t_n)\big(\gamma+ \sum_{k\ge 1} \frac{a_k}{z-t_k}\big)$
have the same number of zeros in $D(t_n, \delta_n)$
whence $G$ has a unique zero $s_n \in D(t_n, \delta_n)$ and $|s_n - t_n| \asymp |a_n|$. 

Now we can write
$$
G(z) = A(z) \prod_{n>n_0} \frac{z-s_n}{z-t_n} \cdot \frac{H(z)}{\prod_{n\le n_0}(z-t_n)}
$$
for some entire function $H$. The infinite product converges uniformly
on compact subsets of the plain, since 
$$
\sum_n \frac{|s_n-t_n|}{|z-t_n|} \to 0
$$
uniformly outside of small neighborhoods of $t_n$. An application of the maximum principle completes the argument. 

It remains to show that $H$ is a polynomial of degree exactly $n_0$. Indeed, 
we have
$$
\bigg|\frac{G(z)}{A(z)}\bigg| \asymp 1, \qquad 
\prod_{n>n_0} \bigg|\frac{z-s_n}{z-t_n} \bigg|  = \prod_{n>n_0} \bigg|1 + \frac{t_n-s_n}{z-t_n} \bigg| \asymp 1,
$$
for $|z| >R$ (with a sufficiently large $R$) and $z\notin \cup_n D(t_n, \delta_n)$. We conclude that 
$H$ is a polynomial of degree $n_0$.

To prove the converse, define $G(z) = A(z)\prod_{n\ge 1} \frac{z-s_n}{z-t_n}$
and put 
$$
a_n ={\rm Res}_{t_n} \frac{G}{A} = (t_n-s_n)\prod_{k\ne n} \frac{s_n-s_k}{s_n-t_k}.
$$
By \eqref{pro2a} the product converges and $|a_n| \asymp |s_n-t_n|$. 
Here we use that $|s_n-t_n| = o(\delta_n)$, $n\to\infty$, whence $\sum_{k\ne n} \frac{|t_k-s_k|}{|s_n-t_k|} \to 0$
as $n\to\infty$. We can therefore  write
$$
\frac{G(z)}{A(z)} = \sum_n \frac{a_n}{z-t_n} +H(z)
$$
for some entire $H$. Note that $|G(z)/A(z)| \asymp 1 $ 
when $z\notin \cup_n D(t_n, \delta_n)$ and $|z|$ is sufficiently large. 
Thus, making use of  \eqref{pro2a}, we conclude that $H$ is a nonzero constant.
\end{proof} 

The following lemma provides some natural sufficient conditions for \eqref{pro2}. 
Recall that the sequence $T=\{t_n\}$ is {\it power separated} (with exponent $N$) if 
there exist numbers $C>0$ and $N>-1$ such that, for any $n$, 
${\rm dist}\,(t_n, T\setminus\{t_n\}) \geq C(|t_n|+1)^{-N}$. 
Note that we allow $N$ to be negative. If $N=0$,
then $T$ is simply separated, i.e., $|t_n-t_m|\ge \delta$ for some $\delta>0$ and any $n\ne m$.

Any power separated sequence has finite convergence exponent and so $\hht$ is a space of finite order. 
A typical (and in a sense ``the largest'') example of a power separated sequence with the exponent $N$ is 
$\{m^\alpha + i n^\alpha\}_{m,n\ge 1}$ where $\alpha = \frac{1}{1+N}$.

\begin{lemma}
\label{bbb}
Assume that $T = \{t_n\}$ is power separated with exponent $N$. If $(t_n^N a_n) \in \ell^p$
for some $p <2$, then \eqref{pro2} holds for $\delta_n = \frac{C}{3}(|t_n|+1)^{-N}$.
If, additionally, $T$ lies in a finite union of some strips, then \eqref{pro2}  holds 
when $(t_n^N a_n) \in \ell^p$ for some $p\in (0, \infty)$. 
\end{lemma}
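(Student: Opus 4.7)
The plan is to fix $R = |z|$ large and decompose, for $z$ outside the disks $D(t_n, \delta_n)$, the sum in \eqref{pro2} as $\Sigma(z) \le S_1 + S_2 + S_3$ according to $|t_n| \le R/2$ (piece $S_1$), $|t_n| \in (R/2, 2R]$ ($S_2$), or $|t_n| > 2R$ ($S_3$), and to bound each piece via H\"older's inequality and the counting estimate implied by power separation. Indeed, the separation condition applied at both endpoints shows that the disks $D(t_n, c(|t_n|+1)^{-N}/2)$ are pairwise disjoint, so a volume comparison gives $n_T(r) := \#\{n : |t_n| \le r\} \lesssim r^{2(N+1)}$; restricting the same argument to a strip of bounded width yields the sharper bound $n_T(r) \lesssim r^{1+2N}$ together with an even finer one-dimensional packing bound within thin annuli.

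For the outer pieces $S_1$ and $S_3$ I would use the crude distance estimates $|z - t_n| \ge R/2$ and $|z - t_n| \ge |t_n|/2$ respectively, combined with the factorization $|a_n| = |t_n^N a_n| \cdot |t_n|^{-N}$. H\"older's inequality with the conjugate pair $(p, p')$ reduces matters to bounding the dual sums $\sum_{|t_n| \le R/2} |t_n|^{-Np'}$ and $\sum_{|t_n| > 2R} |t_n|^{-(N+1)p'}$, which the density bound controls as polynomials in $R$; the condition $p' > 2$ (equivalent to $p < 2$) then makes $S_1, S_3 \to 0$, either by producing a negative power of $R$ in $S_1$ or by making $S_3$ a convergent tail.

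The heart of the argument is the middle piece $S_2$. Pulling $|t_n|^N \asymp R^N$ out of $|a_n|$ and applying H\"older, one is left to estimate $\Sigma_2(z) := \sum_{n \in A_2} |z - t_n|^{-p'}$. Since by power separation the $t_n$ in $A_2$ are $\gtrsim R^{-N}$-separated and $|z - t_n| \ge C R^{-N}/3$ by hypothesis, a dyadic decomposition on $|z - t_n| \asymp 2^k R^{-N}$ combined with the planar packing bound of at most $\lesssim 4^k$ separated points per dyadic annulus around $z$ yields $\Sigma_2(z) \lesssim R^{Np'} \sum_{k \ge 0} 2^{k(2-p')}$, and the geometric series converges precisely when $p' > 2$. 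Combining with the prefactor $R^{-N}$ gives $S_2 \lesssim \|(t_n^N a_n)\|_{\ell^p(A_2)}$, a tail of a convergent series, hence $S_2 \to 0$. The case $p \le 1$ is handled by the inclusion $\ell^p \subset \ell^q$ for $p \le q$, reducing to some $q \in (1, 2)$.

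For the strip case I would replace the planar packing bound $\lesssim 4^k$ by the linear bound $\lesssim 2^k$, available in a strip of bounded width at scales exceeding the strip width (since the annulus intersected with the strip is essentially a one-dimensional segment there); the geometric series then becomes $\sum_k 2^{k(1-p')}$, convergent for all $p' > 1$, i.e., for every $p \in (0, \infty)$. The outer-piece estimates are relaxed analogously using the one-dimensional form of the density bound inside a strip. The main technical obstacle I anticipate is the careful dyadic bookkeeping in the $S_2$ step: one must verify that the cancellation $R^{Np'/p'} \cdot R^{-N} = 1$ really occurs, and, in the strip case, that the transition between the planar-dominated scales $2^k R^{-N} \lesssim H$ and the strip-dominated scales $2^k R^{-N} \gtrsim H$ is handled so that the sharp threshold reduces from $p' > 2$ to $p' > 1$.
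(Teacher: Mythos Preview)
For the first assertion ($p<2$) your three-piece decomposition with the dyadic packing bound on $S_2$ is correct and gives a self-contained proof. The paper takes a different route: it reduces (by appending points) to the model lattice $\{m^\alpha + i n^\alpha\}$, $\alpha = 1/(N+1)$, and then uses the explicit lower bound $|t_{k,m} - t_{k_0,m_0}| \gtrsim |t_{k,m}|^{-N}(|k-k_0|+|m-m_0|)$ together with H\"older and the fact that $\sum_{(k,m)\ne(0,0)}(|k|+|m|)^{-q}<\infty$ exactly when $q>2$. Your argument avoids the reduction step (which the paper itself only sketches) at the cost of a slightly longer but elementary counting.

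Your strip argument, however, has a real gap. The claimed linear bound ``$\lesssim 2^k$ points per dyadic annulus once the scale exceeds the strip width $H$'' does not follow from power separation when $N>0$. The annulus $\{|z-t_n|\asymp 2^k R^{-N}\}$ meets the strip in a rectangle of size roughly $(2^k R^{-N})\times H$, and points that are merely $R^{-N}$-separated fill it to area density $R^{2N}$, giving $\asymp 2^k H R^N$ points, not $2^k$. Carrying this through, both the small-scale and large-scale pieces of $\Sigma_2(z)$ contribute $\asymp R^{2N}$, so $S_2 \lesssim R^{N(2/p'-1)}\|(t_n^N a_n)\|_{\ell^p(A_2)}$; for $p\ge 2$ (i.e.\ $p'\le 2$) the power of $R$ is nonnegative and the tail factor alone does not force $S_2\to 0$. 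What actually drops the threshold from $p'>2$ to $p'>1$ in the strip case is the reduction to the one-dimensional model $\{n^\alpha\}$, where the points in $A_2$ are indexed by a \emph{single} integer $n$ near some $k_0$ and one has the pointwise bound $|z-t_n|\gtrsim |t_n|^{-N}|n-k_0|$; this is structural input, not a packing estimate. Your packing approach recovers the strip improvement only when $N\le 0$.
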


\begin{proof}
It is sufficient to prove the lemma for the sequence
$\{m^\alpha + i n^\alpha\}_{m,n\ge 1}$ where $\alpha = \frac{1}{1+N}$.
In the general case, appending the sequence if necessary, we can always consider $T$
to be a small perturbation of a dilation of $\{\pm m^\alpha \pm i n^\alpha\}_{m,n\ge 1}$. We omit the technicalities.

For $t_n = k^\alpha+ i m^\alpha \ne k_0^\alpha+ i m_0^\alpha$ one has
$$
|k^\alpha+i m^\alpha - (k_0^\alpha+i m_0^\alpha)| \gtrsim |t_n|^{-N} (|k-k_0| + |m-m_0|).
$$
Since $\sum_{(k, m) \ne (k_0, m_0)}  (|k-k_0| + |m-m_0|)^{-q} <\infty$ if and only if $q>2$, the condition 
$(t_n^N a_n) \in \ell^p$ for some $p <2$ is sufficient for \eqref{pro2}.
\end{proof}

The following result contains Theorem \ref{bas} as a special case (where $c_n = -\gamma^{-1}\mu_n^{1/2}$,
$(\mu_n) \in \ell^1$ and $(t_n^N \mu_n) \in\ell^p$).
Recall that we denote by $\tilde K_w$ the normalized reproducing kernel of $\hht$
at a point $w$.

\begin{theorem}
\label{bas1}
Let $\hht$ be a CdB-space such that $T$ is power separated with exponent $N$ and 
$(t_n^{2N} \mu_n) \in\ell^p$ for some $p\in (0, \infty)$.
Let
$$
G(z) = A\bigg(1+ \sum_{n\ge 1} \frac{c_n \mu_n^{1/2}}{z-t_n}\bigg),
$$
where $(c_n) \in\ell^2$. If all zeros of $G$ are simple, then $\{\tilde K_s\}_{s\in \mathcal{Z}(G)}$ is a Riesz basis in $\hht$.

If $T$ lies in a finite union of some strips, then the conclusion of the theorem remains true under a weaker
condition $(t_n^{2N} \mu_n) \in\ell^\infty$. 
\end{theorem}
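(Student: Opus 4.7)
My plan is to prove Theorem \ref{bas1} in two stages: first locate $\mathcal{Z}(G)$ via Proposition \ref{hry} and establish completeness via Theorem \ref{comp1}, then represent the reproducing kernels in the canonical orthonormal basis of $\hht$ and analyze the resulting transition matrix on $\ell^2$. Setting $a_n = c_n\mu_n^{1/2}$, H\"older's inequality applied to $(c_n)\in\ell^2$ and $(t_n^N\mu_n^{1/2})\in\ell^{2p}$ (which follows from $(t_n^{2N}\mu_n)\in\ell^p$) gives $(t_n^N a_n)\in\ell^{2p/(p+1)}\subset\ell^s$ for some $s<2$; in the strip case $(t_n^{2N}\mu_n)\in\ell^\infty$ yields $(t_n^N a_n)\in\ell^2$, which is still admissible. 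Lemma \ref{bbb} then provides condition \eqref{pro2}, and Proposition \ref{hry} produces an enumeration $\mathcal{Z}(G)=\{s_n\}$ with $|s_n-t_n|\asymp|c_n|\mu_n^{1/2}$. Since $\sum_n|a_n|/(|t_n|+1)<\infty$ by Cauchy--Schwarz, Theorem \ref{comp1} applied with $\gamma=1$ ensures that $\mathcal{Z}(G)$ is a uniqueness set for $\hht$, giving completeness of $\{K_{s_n}\}$.

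For the Riesz basis property I would identify $\hht$ with $\ell^2$ via the canonical orthonormal basis $\{\mu_k^{1/2}A(z)/(z-t_k)\}$, under which $K_w$ (for $w\notin T$) has coordinates $\big(\overline{A(w)}\mu_k^{1/2}/(\bar w-\bar t_k)\big)_k$ and $\|K_w\|^2=|A(w)|^2\sum_k\mu_k/|w-t_k|^2$. The family $\{\tilde K_{s_n}\}$ corresponds to the columns of the infinite matrix
\[
T_{kn}=\frac{\bar\epsilon_n\mu_k^{1/2}}{(\bar s_n-\bar t_k)R_n},\qquad R_n^2=\sum_k\frac{\mu_k}{|s_n-t_k|^2},\qquad \epsilon_n=\frac{A(s_n)}{|A(s_n)|},
\]
each column a unit vector in $\ell^2$. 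The Riesz basis property is equivalent to $T$ being bounded and boundedly invertible on $\ell^2$; combined with the completeness from the previous paragraph this finishes the proof. The location estimate together with power separation gives $|s_n-t_k|\asymp|t_n-t_k|\gtrsim(|t_n|+|t_k|)^{-N}$ for $k\ne n$, so $T$ is a near-diagonal matrix whose off-diagonal entries are controlled by $\mu_k^{1/2}/(|t_n-t_k|R_n)$.

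The main technical obstacle is obtaining Schur-type bounds on the operator norm of $T$ together with a matching lower bound for $T^{-1}$, equivalently showing that $TT^*-I$ has small operator norm or that $T$ is a bounded perturbation of a unitary. This reduces to uniform estimates on weighted Cauchy sums $\sum_k\mu_k/|t_n-t_k|^2$ and on cross-sums of the form $\sum_n\mu_k^{1/2}\mu_l^{1/2}/((\bar s_n-\bar t_k)(s_n-t_l)R_n^2)$ for $k\ne l$, carried out by splitting the sums into near-field and far-field contributions and applying the two available ingredients: the power separation $|t_n-t_k|\gtrsim(|t_n|+|t_k|)^{-N}$ and the $\ell^p$-summability of $(t_n^{2N}\mu_n)$. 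In the strip case the essentially one-dimensional geometry of $T$ makes these sums easier to control, so the weaker hypothesis $(t_n^{2N}\mu_n)\in\ell^\infty$ suffices.
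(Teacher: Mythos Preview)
Your setup through Proposition \ref{hry} and Lemma \ref{bbb} is correct and matches the paper, including the H\"older computation giving $(t_n^N a_n)\in\ell^s$ with $s=2p/(p+1)<2$. Invoking Theorem \ref{comp1} for completeness is also legitimate (the paper does not use it, but power separation forces finite order, so the hypotheses are met).

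The divergence is in the Riesz basis step. You propose to control the full transition matrix $(T_{kn})$ via Schur-type row/column bounds and a matching lower bound on $T^{-1}$, and you identify the cross-sums $\sum_n\mu_k^{1/2}\mu_l^{1/2}/((\bar s_n-\bar t_k)(s_n-t_l)R_n^2)$ as the main obstacle left unexecuted. The paper avoids all of this by showing the system is \emph{quadratically close} to the orthonormal basis $\{\tilde K_{t_n}\}$: under the normalization $\frac{(\bar s_n-\bar t_n)K_{s_n}}{\overline{A(s_n)}\mu_n^{1/2}}$ the $n$-th coordinate in the canonical basis equals $1$, and the off-diagonal part satisfies
\[
\sum_n\Big\|\frac{(\bar s_n-\bar t_n)K_{s_n}}{\overline{A(s_n)}\mu_n^{1/2}}-\frac{K_{t_n}}{\overline{A'(t_n)}\mu_n^{1/2}}\Big\|^2
\asymp\sum_n|c_n|^2\sum_{k\ne n}\frac{\mu_k}{|s_n-t_k|^2}\lesssim\sum_n|c_n|^2<\infty,
\]
using only the single uniform estimate $\sum_{k\ne n}\mu_k/|s_n-t_k|^2=O(1)$, which follows directly from power separation and $(t_n^{2N}\mu_n)\in\ell^p$. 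In other words, the off-diagonal part of your matrix $T$ is actually Hilbert--Schmidt, not merely Schur-bounded; Bari's theorem then applies after truncating so the tail sum is below $1$, and the finitely many remaining points are handled by the stability of the Riesz basis property under finite point moves (via the Division Property). Your operator-norm program would in principle succeed, but it is harder than necessary and the cross-sum estimates you single out are never needed: one $\ell^2$ estimate replaces them all.
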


\begin{proof}
If $(t_n^{2N} \mu_n) \in\ell^p$, $p\in(0,1)$ and $(c_n) \in\ell^2$, we have
 $(t_n^N c_n \mu_n^{1/2}) \in \ell^s$ for some $s<2$, while 
in the case when $T$ lies in a finite union of some strips and  $(t_n^{2N} \mu_n) \in\ell^\infty$
we have $(t_n^N c_n \mu_n^{1/2}) \in \ell^2$. By Lemma \ref{bbb}, \eqref{pro2} is satisfied 
for $a_n = c_n \mu_n^{1/2}$.

Let $\mathcal{Z}(G) = \{s_n\}$. According to Proposition \ref{hry} 
we may assume that $s_n$ are enumerated so that $|s_n-t_n| \asymp |c_n| \mu_n^{1/2}$.
We will show that $\{\tilde K_{s_n}\}$ is a quadratic perturbation of the orthonormal basis $\{\tilde K_{t_n}\}$. 

Recall that, for $s_n\notin T$, one has
$$
K_{s_n} (z) =\overline{A(s_n)} A(z) \sum_k \frac{\mu_k}{(\bar s_n-\bar t_k) (z-t_k)}, \qquad
K_{t_n}  (z) = \overline{A'(t_n)} \mu_n \cdot \frac{A(z)}{z-t_n},
$$
whence $\|K_{t_n}\|^2 = |A'(t_n)|^2 \mu_n$ and
$$
\|K_{s_n}\|^2 = |A(s_n)|^2 \sum_k \frac{\mu_k}{|s_n - t_k |^2}. 
$$
Similar to the proof of Lemma \ref{bbb}, it is easily seen that
if $T$ is separated with the exponent $N$ and $(t_n^{2N} \mu_n) \in\ell^p$, then
$$
\sum_{k \ne n} \frac{\mu_k}{|s_n - t_k |^2}  =O(1).
$$
Thus, 
$$
\|K_{s_n}\|^2  \asymp \frac{|A(s_n)|^2 \mu_n}{|s_n-t_n|^2 }.
$$

We will show that  
\begin{equation}
\label{agani}
\sum_n \bigg\| \frac{(\bar s_n - \bar t_n) K_{s_n}}{\overline{A(s_n)} \mu_n^{1/2}}  -  \frac{K_{t_n}}{\overline{A'(t_n)} \mu_n^{1/2}} \bigg\|^2 <\infty.
\end{equation}
Indeed, we have
$$
\frac{(\bar s_n - \bar t_n) K_{s_n}}{\overline{A(s_n)} \mu_n^{1/2}}  -  \frac{K_{t_n}}{\overline{A'(t_n)} \mu_n^{1/2}}
= A(z) \frac{\bar s_n - \bar t_n}{\mu_n^{1/2}} \sum_{k\ne n} \frac{\mu_k}{(\bar s_n-\bar t_k) (z-t_k)}.
$$
Hence, making use of $|s_n-t_n| \asymp |c_n| \mu_n^{1/2}$,
$$
\bigg\| \frac{(\bar s_n - \bar t_n) K_{s_n}}{\overline{A(s_n)} \mu_n^{1/2}}  -  \frac{K_{t_n}}{\overline{A'(t_n)} \mu_n^{1/2}} \bigg\|^2 
\asymp |c_n|^2   \sum_{k\ne n} \frac{\mu_k}{|s_n-t_k|^2} \lesssim |c_n|^2,
$$
which proves  \eqref{agani}. 

To prove the theorem we would like to refer to the classical Bari's theorem about systems which are quadratically close to 
an orthonormal basis.  For this one needs that the system in question is quadratically independent. This is easy to show, but 
since we deal with systems of reproducing kernels, we also can use their special properties. Recall that a sequence 
of normalized reproducing kernels $\{\tilde k_\lambda\}_{\lambda\in\Lambda}$ in a Reproducing Kernel Hilbert space of analytic 
functions $\hh$ is a Riesz basis if and only if the map 
$$
f\mapsto \{f(\lambda)/\|k_\lambda\|\}_{\lambda\in\Lambda}
$$
is an isomorphism of $\hh$ onto $\ell^2$. It is well known and easy to see
that if $\hh$ has the Division Property, then the  property to be a Riesz basis
is stable with respect to moving a finite number of points. More precisely, 
for the sets $\Lambda = \{\lambda_n\}_{n\ge 1}$ and $\Tilde \Lambda = \{\tilde \lambda_n\}_{n\ge 1}$ of {\it distinct} points 
such that $\lambda_n = \tilde \lambda_n$ for $n\ge n_0$ the families 
$\{\tilde k_\lambda\}_{\lambda\in\Lambda}$ and $\{\tilde k_{\tilde \lambda}\}_{\lambda\in\tilde \Lambda}$ 
are or are not Riesz bases simultaneously. 

Now choose $n_0$ such that
$$
\sum_{n>n_0} \bigg\| \frac{(\bar s_n - \bar t_n) K_{s_n}}{\overline{A(s_n)} \mu_n^{1/2}}  -  \frac{K_{t_n}}{\overline{A'(t_n)} \mu_n^{1/2}} \bigg\|^2 <1.
$$
Then $\{\tilde K_{s_n}\}_{n>n_0}\cup \{\tilde K_{t_n}\}_{n\le n_0}$ is a Riesz basis, and so is $\{\tilde K_{s_n}\}$.
\end{proof}

\begin{remark}
{\rm 1. In the case when $G$ has a multiple zero, say, 
a zero $\lambda$ of multiplicity $m>1$, one should add to the system $\{\tilde K_s\}_{s\in \mathcal{Z}(G)}$
the (normalized) reproducing kernels for derivatives $K^{(j)}_\lambda$, $1\le j\le m-1$,
where $K^{(j)}_\lambda$ is a function in $\hht$ such that $(F, K^{(j)}_\lambda) = F^{(j)}(\lambda)$, $F\in \hht$. 
\smallskip

2. In applications to spectral theory the power separation condition is often relaxed to some weaker ``separation in the mean''
conditions (see, e.g., \cite[Sections 6, 7]{shkal}). 
In this case, using similar methods, one can show that the corresponding system of kernels will be a {\it Riesz basis with brackets}.
We will not pursue this idea here. }
\end{remark}
\bigskip


\section{Applications to rank one perturbations of normal operators}
\label{appli}

Following L. de Branges \cite{br}, we say that an entire function $G$ is {\it associated} to the space 
$\mathcal{H}(T,A,\mu)$  and write $G\in {\rm Assoc}\,(\mathcal{H}(T, A, \mu))$ 
if, for any $F\in \mathcal{H}(T,A,\mu)$ and $w\in\CC$, we have 
$$
\frac{F(w)G(z) - G(w)F(z)}{z-w} \in \mathcal{H}(T,A,\mu).
$$
If $G$ has zeros, then the inclusion $G\in {\rm Assoc}\,(\mathcal{H}(T, A, \mu))$ is equivalent to
$\frac{G(z)}{z-\lambda} \in \mathcal{H}(T,A,\mu)$ for some (any) 
$\lambda\in \mathcal{Z}(G)$. In particular, we have $A\in {\rm Assoc}\,(\mathcal{H}(T, A, \mu)) \setminus \mathcal{H}(T,A,\mu)$.

CdB-spaces form a natural setting for a functional model of rank one perturbations  
of compact normal operators. Let $\A$ be a compact normal operator with simple
point spectrum $\{s_n\}$ and trivial kernel (i.e., $0\notin \{s_n\}$). Thus,
$\A$ is unitarily equivalent to multiplication by $z$ in $L^2(\nu)$, 
$\nu = \sum_n \delta_{s_n}$.  Put $t_n = s_n^{-1}$, $T=\{t_n\}$.

For $a=(a_n)$, $b=(b_n) \in L^2(\nu)\cong \ell^2$, consider the rank one perturbation of $\A$,
$$
\LL = \A + a\otimes b, \qquad \LL x = \A x+ (x, b)a.
$$
Assume that $b=(b_n)$ is a cyclic vector for $\A$, i.e., $b_n \ne 0$ for any $n$.

The following functional model for $\LL$ was studied in \cite[Theorem 2.9]{bar18}.
It is based on the standard representation of the resolvent as a Cauchy-type integral. 

\begin{theorem}[Functional model for rank one perturbations]
\label{model}
1. Let $\A$ and $\LL$ be as above. Then
there exist 
\begin{itemize}
\item
a positive measure $\mu=\sum_n\mu_n\delta_{t_n}$ such that
$\sum_n \frac{\mu_n}{|t_n|^2 +1}<\infty$\textup; 
\item
a space $\mathcal{H}(T,A,\mu)$\textup;
\item 
an entire function $G\in  {\rm Assoc}\,(\mathcal{H}(T, A, \mu))$ with $G(0)=1$
\end{itemize}
such that $\LL$ is unitarily equivalent to the model operator 
$\mathcal{T}_G: \mathcal{H}(T,A,\mu) \to \mathcal{H}(T,A,\mu)$, 
$$
(\mathcal{T}_Gf)(z) = \frac{f(z) - f(0)G(z)}{z}, \qquad f \in \mathcal{H}(T,A,\mu).
$$

2. Conversely, for any space $\mathcal{H}(T,A,\mu)$ with $0\notin T$, and the function 
$G \in  {\rm Assoc}\,(\mathcal{H}(T, A, \mu))$ with $G(0) = 1$ 
the corresponding operator $\mathcal{T}_G$ is a model
of a rank one perturbation for some compact normal operator $\A$
with spectrum $\{s_n\}$, $s_n = t_n^{-1}$.

3. Moreover, the set of eigenvalues of $\LL$ coincides with $\{\lambda^{-1}:\ \lambda\in \ZZ(G) \}$
and the corresponding eigenvectors of the model operator $\mathcal{T}_G$ are given 
by $\frac{G(z)}{z-\lambda}$, $\lambda\in \ZZ(G)$, while 
the eigenvectors of $\mathcal{T}_G^*$ are of the form $K_\lambda$, $\lambda\in \ZZ(G)$.
\end{theorem}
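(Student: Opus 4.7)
The plan is to realize $\A$ via the spectral theorem, transport it to a CdB space by an explicit Cauchy-type unitary, and then read off the entire function $G$ directly from the rank-one perturbation; the converse is obtained by running the construction backwards, and the spectral properties of $\mathcal{T}_G$ fall out of its defining formula.

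For Part 1, I would use the spectral theorem to identify $\A$ with multiplication by $s$ on $L^2(\nu)\cong\ell^2$, where $\nu=\sum_n \delta_{s_n}$; cyclicity of $b$ forces $b_n\ne 0$ for every $n$. Setting $t_n=s_n^{-1}$ (so $|t_n|\to\infty$ and, since $|s_n|$ is bounded, $\inf|t_n|>0$), fix any entire $A$ with simple zero set $T=\{t_n\}$ and, for coefficients $d_n\in\co$ with $|d_n|^2=\mu_n$, consider the unitary
$$
U\colon \ell^2\to \hht,\qquad U(x)(z)=A(z)\sum_n \frac{x_n d_n}{z-t_n}.
$$
A direct computation based on $\frac{1}{z(z-t_n)}=\frac{1}{t_n}\bigl(\frac{1}{z-t_n}-\frac{1}{z}\bigr)$ shows that for any entire $G$ with $G(0)=1$,
$$
\mathcal{T}_G Ux = U\A x + \frac{A(0)G(z)-A(z)}{z}\sum_n \frac{x_n d_n}{t_n}.
$$
Imposing $U\LL U^{-1}=\mathcal{T}_G$ then requires
$$
\frac{A(0)G(z)-A(z)}{z}\sum_n \frac{x_n d_n}{t_n} = (x,b)\,A(z)\sum_n \frac{a_n d_n}{z-t_n}\qquad(x\in\ell^2),
$$
which is consistent in $x$ iff $d_n=\lambda t_n\overline{b_n}$ for some scalar $\lambda$. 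Normalizing $\lambda=1$ gives $\mu_n=|t_n b_n|^2$ (so $\sum\mu_n/(|t_n|^2+1)\le\|b\|_{\ell^2}^2<\infty$) and
$$
A(0)G(z) = A(z) + z\sum_n a_n F_n(z),\qquad F_n(z)=d_n \frac{A(z)}{z-t_n},
$$
where $h:=(A(0)G-A)/z=\sum_n a_n F_n\in\hht$ because $(a_n)\in\ell^2$ and $\{F_n\}$ is the orthonormal basis of $\hht$. This gives $G$ entire with $G(0)=1$; the associated property $G\in{\rm Assoc}\,(\hht)$ then follows from $A\in{\rm Assoc}\,(\hht)$, $h\in\hht$, and the Division Property applied to $(F(w)h-h(w)F)/(z-w)$.

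For Part 2, given any $\hht$ with $0\notin T$ (so $\inf|t_n|>0$) and $G\in{\rm Assoc}\,(\hht)$ with $G(0)=1$, I would run the construction in reverse: set $G_0=A/A(0)$, observe that $\mathcal{T}_{G_0} F_n = s_n F_n$ (so $\mathcal{T}_{G_0}$ is a compact normal operator on $\hht$ with spectrum $\{s_n\}$), and write
$$
(\mathcal{T}_G-\mathcal{T}_{G_0})F = F(0)\,\frac{G_0(z)-G(z)}{z} = -\frac{F(0)}{A(0)}\cdot h(z),
$$
with $h=(A(0)G-A)/z$. The key point is that $h\in\hht$, which follows from $G\in{\rm Assoc}\,(\hht)$ via the characterization $G/(z-\lambda)\in\hht$ for $\lambda\in\ZZ(G)$ combined with the Division Property at the point $0$. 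Expanding $h=\sum a_n F_n$ with $a\in\ell^2$ and taking $b_n=\overline{d_n}/\overline{t_n}$ (so $b\in\ell^2$, using $\inf|t_n|>0$) identifies $\mathcal{T}_G$ with $\mathcal{T}_{G_0}+a\otimes b$, a rank-one perturbation of the compact normal $\mathcal{T}_{G_0}$.

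For Part 3, the eigenvalue equation $\mathcal{T}_G F=\lambda F$ rewrites as $F(1-\lambda z)=F(0)G$. Any nonzero solution must have $F(0)\ne 0$, whence $F=G/(1-\lambda z)=-\lambda_0 G/(z-\lambda_0)$ up to a scalar, with $\lambda_0=1/\lambda$; membership in $\hht$ is equivalent to the pole at $\lambda_0$ being cancelled, i.e.\ $\lambda_0\in\ZZ(G)$, in which case $G(z)/(z-\lambda_0)\in\hht$ by the very definition of associated. Dually, for any $F\in\hht$ and $\lambda_0\in\ZZ(G)$, $(\mathcal{T}_G F)(\lambda_0)=F(\lambda_0)/\lambda_0$, so $\mathcal{T}_G^* K_{\lambda_0}=K_{\lambda_0}/\overline{\lambda_0}$, identifying $K_{\lambda_0}$ as the corresponding eigenvector of the adjoint. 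The main obstacle I anticipate is in Part 2, namely justifying $(A(0)G-A)/z\in\hht$ for a general $G\in{\rm Assoc}\,(\hht)$ with $G(0)=1$: since $A\notin\hht$, one cannot directly substitute $F=A$ into the defining relation of Assoc, and must instead extract the required function by combining the $G/(z-\lambda)\in\hht$ characterization with a careful use of the Division Property at $0$.
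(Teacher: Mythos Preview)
The paper does not actually prove Theorem~\ref{model}; it is quoted from \cite[Theorem 2.9]{bar18} with the comment that it ``is based on the standard representation of the resolvent as a Cauchy-type integral,'' and only the explicit formulas $\mu_n=|t_n|^2|b_n|^2$ and
\[
G(z)=A(z)\Big(1+z\sum_n \frac{a_n\bar b_n t_n}{z-t_n}\Big)
\]
are recorded (implicitly with the normalization $A(0)=1$). Your direct computational route is correct and lands on exactly these formulas: the choice $d_n=t_n\overline{b_n}$ is forced by matching the linear functionals $x\mapsto\sum_n x_n d_n/t_n$ and $x\mapsto (x,b)$, and then $G$ drops out. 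So where the cited source goes through the resolvent, you intertwine $\mathcal{T}_G$ with $U\LL U^{-1}$ directly; both give the same model.

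Regarding the obstacle you flag in Part~2, it dissolves once you use that ${\rm Assoc}\,(\hht)$ is a vector space: since $A\in{\rm Assoc}$ and $G\in{\rm Assoc}$, the function $H:=A(0)G-A$ is associated, and $H(0)=A(0)G(0)-A(0)=0$. The characterization stated in the paper (``$G\in{\rm Assoc}$ is equivalent to $G/(z-\lambda)\in\hht$ for any $\lambda\in\mathcal{Z}(G)$'') then gives $h=H/z\in\hht$ immediately, with no need to route through a separate zero of $G$ or invoke the Division Property at $0$. The same observation streamlines your verification in Part~1 that $G\in{\rm Assoc}$: since $h\in\hht$, the function $zh$ is associated (it has a zero at $0$ and $zh/z=h\in\hht$), hence $A(0)G=A+zh\in{\rm Assoc}$; your proposed argument via $z\cdot\frac{F(w)h(z)-h(w)F(z)}{z-w}$ would otherwise run into the problem that multiplication by $z$ need not preserve $\hht$.
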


The measure $\mu$ and the function $G$ in this model are related to the perturbation by the formulas
$$
\mu_n = |t_n|^2 |b_n|^2
$$
and
$$
G(z)  = A(z)\bigg( 1+ 
z \sum_n \frac{a_n \bar b_n t_n }{z- t_n} \bigg) =
A(z) \bigg( 1+ 
\sum_n a_n \bar b_n t^2_n \Big(\frac{1}{z- t_n} +\frac{1}{t_n} \Big) \bigg).
$$

Note that $\LL^*$ also is a rank one perturbation of a normal operator $\A^*$, and applying
the above model to $\LL^*$ we reduce the properties of eigenvectors of $\LL$ to the study of geometric properties
of systems of reproducing kernels in CdB spaces. In case of multiple zeros, one should also consider reproducing kernels for derivatives
$K_\lambda^{(j)}$.
Also, note that the systems $\Big\{\frac{G(z)}{G'(\lambda)(z-\lambda)}\Big\}_{\lambda\in \ZZ(G)}$
and $\{K_\lambda\}_{\lambda\in \ZZ(G)}$ are biorthogonal.

Now assume that $\A$ belongs to some Schatten class $\mathfrak{S}_p$, $p>0$, 
which is equivalent to the property that $T =\{t_n\}$ has finite convergence exponent (and so the space $\hht$ can be chosen to be of finite order).
To apply Theorem \ref{comp1} to $\hht$ and $G$ one must impose the following conditions:
$$
\sum_n |a_nb_n t_n| <\infty, \qquad  1+ \sum_n a_n \bar b_n t_n \ne 0.
$$
Under these conditions eigenvectors and root vectors of the perturbed operator $\LL$ are complete, 
the same holds for its adjoint $\LL^*$ (\cite[Theorem 2.1]{bar18}). 

In the case when $(t_n a_n) \in L^2(\nu)$ or $(t_n b_n) \in L^2(\nu)$, we are in the situation
of the so-called {\it weak perturbations} in the sense of V.I.~Macaev. Indeed, in this case $a\in {\rm Range}\,\A$ or
$b\in {\rm Range}\,\A$, whence $\LL$ can be written as $\LL = \A (I+S)$ or
$\LL = (I+S)\A $ for some rank one operator $S$. Perturbations of this form were considered in now classical
theorems of M.V.~Keldysh and V.I.~Macaev. In particular, a theorem due to Keldysh 
says that if $\A\in \mathfrak{S}_p$, $p>0$, is a normal operator, 
whose spectrum lies on a finite system of rays and $S$ is an arbitrary compact operator with ${\rm Ker}\, (I+S) =0$,
then eigenvectors and root vectors of $\LL$ are complete. In Macaev's theorem $\A$ is assumed to be only compact, 
but then it must be selfadjoint. For these results we refer to the original papers by Keldysh and Macaev \cite{keld1, keld2, Mats61}, 
as well as to \cite[Chapter V]{gk} and to a recent survey paper \cite[Section 4]{shkal}.

Our situation is much more special since $S$ of rank one, however we do not need any requirements 
on the location of the spectrum of $\A$. Note that in our case condition ${\rm Ker}\, (I+S) =0$
coincides with $1+ \sum_n a_n \bar b_n t_n \ne 0$. Also it should be mentioned that in general rank one perturbations
need not be complete and, in some cases, can even be Volterra operators (see \cite[Theorems 1.1, 1.2]{by15} 
and \cite[Theorem 8.1]{bar18} for details).

Now we consider an application to the question whether eigenvectors of a perturbed operator
form a Riesz basis. To apply Theorem \ref{bas1} to $G$ we need to write $a_n \bar b_n t^2_n = c_n\mu_n^{1/2}$ 
for some $(c_n)\in\ell^2$. Obviously, this is possible if and only if $(a_nt_n) \in \ell^2$, and so we are in the situation of 
a weak perturbation. We have the following theorem:

\begin{theorem}
\label{bas2}
Let $\A$ be a normal operator in a Hilbert space $H \cong \ell^2$ with simple
spectrum and trivial kernel. Let $t_n= s_n^{-1}$ be the inverse to the eigenvectors of $\A$
and assume that $T =\{t_n\}$ is power separated with exponent $N$ \textup(whence $\A$ is in some Schatten class\textup). 

Let $\LL = \A+a\otimes b$ where $a,b \in H$, $(t_na_n) \in\ell^2$
and $(t_n^{N+1} b_n) \in \ell^p$ for some $p\in (0, \infty)$. Assume that ${\rm Ker}\, \LL =0$
and $b=(b_n)$ is a cyclic vector for $\A$, i.e., $b_n \ne 0$ for any $n$.
Then the \textup(normalized\textup) eigenvectors and root vectors of $\LL$ form a Riesz basis in $H$.

If $T$ lies in a finite union of some strips, then the conclusion of the theorem remains true under a weaker
condition $(t_n^{N+1} b_n) \in\ell^\infty$. 
\end{theorem}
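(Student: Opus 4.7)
The plan is to pass to the functional model of Theorem~\ref{model} and then apply Theorem~\ref{bas1} to the associated entire function $G$. By Theorem~\ref{model}, $\LL$ is unitarily equivalent to the model operator $\mathcal{T}_G$ on $\hht$ with $\mu_n = |t_n|^2 |b_n|^2$ and $G$ given by the displayed Cauchy-transform formula. Under this equivalence, the normalized eigenvectors and root vectors of $\LL$ correspond to the normalized versions of $G(z)/(z-\lambda)$ (and their higher-order analogues at multiple zeros), which are biorthogonal to the reproducing kernels $K_\lambda$, $\lambda\in\ZZ(G)$. Since in a Hilbert space the biorthogonal system to a Riesz basis is itself a Riesz basis with comparable normalization, it suffices to show that $\{\tilde K_\lambda\}_{\lambda\in\ZZ(G)}$ is a Riesz basis in $\hht$.

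To put $G$ into the form required by Theorem~\ref{bas1}, I would collect the constant terms in the regularized expression from Theorem~\ref{model} and rewrite
\[
G(z) = C\cdot A(z)\bigg(1 + \sum_n \frac{c_n\mu_n^{1/2}}{z-t_n}\bigg), \qquad C := 1 + \sum_n a_n\bar b_n t_n, \quad c_n := \frac{a_n\bar b_n t_n^2}{C\,\mu_n^{1/2}}.
\]
The series defining $C$ converges absolutely by Cauchy--Schwarz from $(t_n a_n)\in\ell^2$ and $(b_n)\in\ell^2$, and the hypothesis ${\rm Ker}\,\LL=0$ combined with the model's normalization $G(0)=1$ forces $C\ne 0$. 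Using $\mu_n^{1/2} = |t_n|\,|b_n|$, one computes $|c_n| = |t_n a_n|/|C|$, so $(c_n)\in\ell^2$, matching exactly the hypothesis of Theorem~\ref{bas1}.

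The decay hypothesis on $b$ translates directly: the identity $|t_n|^{2N}\mu_n = |t_n^{N+1}b_n|^2$ turns $(t_n^{N+1}b_n)\in\ell^p$ into $(t_n^{2N}\mu_n)\in\ell^{p/2}$, and similarly the strip-case assumption $(t_n^{N+1}b_n)\in\ell^\infty$ into $(t_n^{2N}\mu_n)\in\ell^\infty$. Theorem~\ref{bas1} then yields that $\{\tilde K_\lambda\}_{\lambda\in\ZZ(G)}$ (supplemented by normalized derivative kernels $\tilde K_\lambda^{(j)}$ at multiple zeros via the Remark after Theorem~\ref{bas1}, corresponding to root vectors on the operator side) is a Riesz basis in $\hht$. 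Combined with the biorthogonality observation above, this gives the desired Riesz basis of normalized eigenvectors and root vectors of $\LL$.

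The main obstacle, such as it is, is the justification that ${\rm Ker}\,\LL=0$ corresponds to $C\ne 0$ via the model: this is implicit in Theorem~\ref{model} (where $G(0)=1$ and the eigenvalues of $\LL$ are $\{\lambda^{-1}:\lambda\in\ZZ(G)\}$) and needs a short verification. The rest is essentially algebraic bookkeeping to match the two hypotheses $(t_n a_n)\in\ell^2$ and $(t_n^{N+1}b_n)\in\ell^p$ with those of Theorem~\ref{bas1}, plus the standard fact that biorthogonal duals of Riesz bases in Hilbert space are Riesz bases.
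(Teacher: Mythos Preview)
Your proposal is correct and follows essentially the same route as the paper: pass to the functional model, rewrite $G$ in the form of Theorem~\ref{bas1} with $\mu_n=|t_n|^2|b_n|^2$ and $|c_n|\asymp|t_na_n|$, check that the decay assumption on $b$ translates into $(t_n^{2N}\mu_n)\in\ell^q$ for some finite $q$ (respectively $\ell^\infty$ in the strip case), and then pass from the Riesz basis of reproducing kernels to its biorthogonal system of eigenvectors. The one point you flag as needing verification---that ${\rm Ker}\,\LL=0$ forces $C=1+\sum_n a_n\bar b_n t_n\ne 0$---is handled in the paper not via $G(0)=1$ but via the weak-perturbation factorization just before the theorem: since $(t_na_n)\in\ell^2$ one has $a\in{\rm Range}\,\A$, so $\LL=\A(I+S)$ with $S$ of rank one, and ${\rm Ker}\,\LL=0\Longleftrightarrow{\rm Ker}(I+S)=0\Longleftrightarrow C\ne 0$.
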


\begin{proof}
Considering the functional model of Theorem \ref{model} for $\LL$ 
we find the space $\hht$, $\mu_n = |t_n|^2 |b_n|^2$, and the corresponding function $G$ given by
\begin{equation}
\label{bho}
G(z)  = A(z)\bigg( 1+ \sum_n  a_n \bar b_n t_n
+ \sum_n \frac{a_n \bar b_n t^2_n}{z-t_n}\bigg).
\end{equation}
If all zeros of $G$ are simple, then, by Theorem \ref{bas1}
(applied to $c_n = a_n t_n$ and $\mu_n = |t_n|^2 |b_n|^2$), the normalized reproducing 
kernels $\{\tilde K_\lambda\}_{\lambda\in \ZZ(G)}$ form a Riesz basis in $\hht$. Therefore, its biorthogonal system 
(which coincides with the set of eigenvectors of $\mathcal{T}_G$, unitary equivalent model for $\LL$) also is a Riesz basis. 

By Proposition \ref{hry} and Lemma \ref{bbb}, all zeros of $G$ except, maybe, a finite number 
are simple. Thus, there exists at most finite number of multiple eigenvalues, and it is easy to see that the union of
eigenvectors and of (finite number of) root vectors gives a Riesz basis.
\end{proof}

Finally, consider a related problem about rank one perturbations of {\it unbounded} normal operators
with discrete spectrum. Now let $\A$ be an unbounded normal operator on a separable Hilbert space 
$H$ with simple spectrum $T =\{t_n\}$, $|t_n| \to \infty$, $0\notin T$. We then may assume
that  $\A$ is multiplication by $z$ in $L^2(\nu) \cong \ell^2$, $\nu=\sum_n \delta_{t_n}$.

Let $\LL = \A + a \otimes b$ be a rank one perturbation of $\A$. We will assume that $a =(a_n) \in L^2(\nu)$.
However, it is sufficent to assume that $b \in \A L^2(\nu)$, i.e., $t_n^{-1} b_n \in \ell^2$.
In this case for $x\in {\mathcal D}(\A)$ we can understand the inner product $(x,b)$ as $(\A x, (\A^*)^{-1}b)$, and so $\LL$
is well defined on ${\mathcal D}(\A)$.

Assume that ${\rm Ker}\, \LL= 0$, which is, obviously equivalent to 
\begin{equation}
\label{kap}
\varkappa = 1+(\A^{-1}a, b) = 1+\sum_n \frac{a_n \bar b_n}{t_n} \ne 0.
\end{equation}
In this case $\LL^{-1}$ is a (bounded) rank one perturbation of a compact normal operator $\A^{-1}$,
$$
\LL^{-1} = \A^{-1} - \frac{1}{\varkappa}  \A^{-1} a \otimes (\A^{-1})^*b.
$$

Assume that $b_n \ne 0$ for any $n$. 
Considering the model for $\LL^{-1}$ we see that $\LL^{-1}$ is unitarily equivalent to
an operator $\mathcal{T}_G$ on $\hht$ where $\mu_n = |b_n|^2$ and 
\begin{equation}
\label{jjk}
G(z)  = \frac{1}{\varkappa} A(z) \bigg(1 - \sum_n \frac{a_n \bar b_n}{z- t_n}\bigg)
\end{equation}
(note that $\A^{-1} a  = (t_n^{-1} a_n)$, $(\A^{-1})^*b = (\overline{t_n^{-1}} b_n)$. 

\begin{theorem}
\label{bas3}
Let $\A$ be an unbounded normal operator
in a Hilbert space $H\cong\ell^2$  whose spectrum $T=\{t_n\}$ is simple and power separated with exponent $N$.
Let $\LL = \A+a\otimes b$, where $a=(a_n) \in\ell^2$, $b_n\ne 0$ for any $n$ and 
$(t_n^N b_n) \in \ell^p$ for some $p\in(0, \infty)$. 
Then the normalized eigenvectors and root vectors of $\LL$ form a Riesz basis in $H$.

If $T$ lies in a finite union of some strips, then the conclusion of the theorem remains true under a weaker
condition $(t_n^{N} b_n) \in\ell^\infty$. 
\end{theorem}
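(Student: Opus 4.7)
The strategy is to transfer the problem from $\LL$ (unbounded) to the bounded rank-one perturbation $\LL^{-1}$ described in the paragraphs preceding the statement, apply the functional model of Theorem \ref{model}, and then invoke Theorem \ref{bas1} in the resulting CdB space. Concretely, assuming the kernel condition \eqref{kap} (implicit in the statement, since otherwise $\LL$ has no bounded inverse and the reduction cannot even start), the operator $\LL^{-1}$ is unitarily equivalent to the model operator $\mathcal{T}_G$ on $\hht$ with $\mu_n = |b_n|^2$ and $G$ given by \eqref{jjk}.

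Next I would rewrite $G$ in the normalized form required by Theorem \ref{bas1}. Pulling out the nonzero constant $1/\varkappa$ and setting $c_n = -\varkappa^{-1} a_n \bar b_n / |b_n|$, so that $c_n \mu_n^{1/2} = -\varkappa^{-1} a_n \bar b_n$, one has $\varkappa G(z) = A(z)(1 + \sum_n c_n \mu_n^{1/2}/(z-t_n))$ with $|c_n| = |\varkappa|^{-1} |a_n|$, hence $(c_n) \in \ell^2$. The coefficient decay $(t_n^{2N} \mu_n) = (t_n^{2N}|b_n|^2) \in \ell^{p/2}$ (respectively $\ell^\infty$ in the strip case) is immediate from the hypothesis $(t_n^N b_n) \in \ell^p$ (respectively $\ell^\infty$). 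Theorem \ref{bas1} then applies: if all zeros of $G$ are simple, the normalized reproducing kernels $\{\tilde K_\lambda\}_{\lambda \in \ZZ(G)}$ form a Riesz basis in $\hht$. By Proposition \ref{hry} combined with Lemma \ref{bbb} (applied with $a_n$ replaced by $-\varkappa^{-1} a_n \bar b_n$), at most finitely many zeros of $G$ can fail to be simple.

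The final step is to convert this Riesz basis of reproducing kernels into a Riesz basis of eigenvectors of $\LL$ by biorthogonality. By Theorem \ref{model}(3), the eigenvectors of $\mathcal{T}_G$ are $\{G(z)/((z-\lambda)\overline{G'(\lambda)})\}_{\lambda \in \ZZ(G)}$, and by the reproducing property they are biorthogonal to $\{K_\lambda\}_{\lambda \in \ZZ(G)}$. Since the biorthogonal system of a Riesz basis is itself a Riesz basis, after normalization the eigenvectors of $\mathcal{T}_G$ form a Riesz basis in $\hht$. Transferring through the unitary equivalence and using that eigenvectors of $\LL^{-1}$ coincide with those of $\LL$ (with reciprocal eigenvalues), we obtain the claimed Riesz basis in $H$. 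The finitely many multiple zeros are handled by including the associated root vectors, namely $G(z)/(z-\lambda)^{j+1}$ type functions on the eigenvector side, biorthogonal to derivatives $K_\lambda^{(j)}$ on the dual side, and invoking the finite-rank stability of Riesz bases of reproducing kernels in spaces with the Division Property, exactly as in the end of the proof of Theorem \ref{bas1}. The strip version is verbatim but uses the $\ell^\infty$ variant of Theorem \ref{bas1}.

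The main obstacle is not conceptual but bookkeeping: tracking $\mu_n$, $c_n$, and the integrability exponents correctly from the raw perturbation data $(a,b)$ through the unbounded model \eqref{jjk} into the hypotheses of Theorem \ref{bas1}, and then cleanly reversing the biorthogonal duality so that a Riesz basis of normalized reproducing kernels yields a Riesz basis of normalized eigenvectors and root vectors (rather than merely a complete biorthogonal family).
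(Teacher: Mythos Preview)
Your proposal is correct and follows essentially the same route as the paper: reduce to $\LL^{-1}$ via \eqref{kap}, invoke the model of Theorem~\ref{model} with $\mu_n=|b_n|^2$ and $G$ as in \eqref{jjk}, verify the hypotheses of Theorem~\ref{bas1} (with $c_n$ proportional to $a_n\bar b_n/|b_n|$ and $(t_n^{2N}\mu_n)\in\ell^{p/2}$), and then pass from the Riesz basis of reproducing kernels to eigenvectors of $\LL$ by biorthogonality. The only cosmetic difference is that the paper disposes of the kernel condition by noting one may replace $\A$ by $\A+\alpha I$, rather than treating \eqref{kap} as an implicit hypothesis; also your formula for $c_n$ carries a stray $\varkappa^{-1}$ (it should be $c_n=-a_n\bar b_n/|b_n|$ once $\varkappa G$ is matched to the form in Theorem~\ref{bas1}), but this constant is irrelevant to the $\ell^2$ membership and hence to the argument.
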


\begin{proof}
We can assume without loss of generality that ${\rm Ker}\, \LL= 0$ (one can always replace $\A$ by $\A +\alpha I$). 
Then one can apply the above functional model to 
$\LL^{-1}$ and get the corresponding space $\hht$ with $\mu_n = |b_n|^2$ and $G$
given by \eqref{jjk}. By the hypothesis, $(t_n^{2N} \mu_n) \in \ell^p$ for some $p\in(0, \infty)$
($(t_n^{2N} \mu_n) \in \ell^\infty$ in the case  when $T$ lies in a finite union of strips). 

Now, applying Theorem \ref{bas1} (with $\mu_n = |b_n|^2$ and $c_n = a_n\bar b_n/|b_n|$), we see that
the set $\{K_{\lambda}\}_{\lambda \in \mathcal{Z}(G)}$ (with added finite number of functions 
$K^{(j)}_{\lambda}$ in case of multiple zeros) is a Riesz basis in $\hht$.
Recall that in our model $\{K_{\lambda}\}_{\lambda \in \mathcal{Z}(G)}$ 
is the system of eigenvectors of  
$\mathcal{T}_G^*$ (which is unitary equivalent to $(\LL^{-1})^*$). Its biorthogonal system is also 
a Riesz basis, thus, the eigenvectors of $\LL^{-1}$ (and of $\LL$) form a Riesz basis in $H$.  
\end{proof}

\begin{corollary}
\label{bas4}
Let $\A$ be an unbounded normal operator
in a Hilbert space $H\cong\ell^2$  whose spectrum $T=\{t_n\}$ is simple and separated,
and let $\LL = \A+a\otimes b$, $a,b\in H$. Then there exists an enumeration $\{z_n\}$ 
of the spectrum of $\LL$ \textup(counting multiplicities\textup) such that $\sum_n |z_n-t_n| <\infty$. Moreover, the normalized 
eigenvectors and root vectors of $\LL$ form a Riesz basis in $H$.

Conversely, for any sequence $\{z_n\}$ such that $\sum_n |z_n-t_n| <\infty$ there exists a unique rank one perturbation 
$\LL = \A+a\otimes b$, $a,b\in H$, of $\A$ such that the spectrum of $\LL$ coincides with $\{z_n\}$
\textup(counting multiplicities\textup).
\end{corollary}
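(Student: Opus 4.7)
The plan is to obtain both directions directly from the functional model developed in Section \ref{appli}: the direct statement by applying the model of $\LL^{-1}$ together with Theorem \ref{bas3} and Proposition \ref{hry}, and the converse by the converse half of Proposition \ref{hry} combined with a reverse reading of the model. I would first reduce to the case where $b$ is cyclic: if $b_n = 0$ for some $n$, then $e_n$ is an eigenvector of $\LL$ at $t_n$ contributing nothing to $\sum_n |z_n - t_n|$ and sits orthogonally to the invariant complement on which $b$ is cyclic and $T$ is still separated. A scalar translation of $\A$ further ensures $0 \notin T$ and $\varkappa \ne 0$, so that we are inside the model \eqref{jjk} with $\mu_n = |b_n|^2$. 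Separation of $T$ is the power-separation condition at exponent $N = 0$, and $(b_n) \in \ell^2$ supplies the remaining hypothesis of Theorem \ref{bas3} with $p = 2$; the Riesz-basis conclusion for the normalized eigen- and root vectors of $\LL$ follows immediately.

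For the enumeration I would apply Proposition \ref{hry} to $\varkappa G = A\bigl(1 - \sum_n a_n\bar b_n/(z-t_n)\bigr)$. The coefficients $c_n := -a_n\bar b_n$ lie in $\ell^1$ by Cauchy--Schwarz, and separation of $T$ together with dominated convergence gives the decay \eqref{pro2} with a uniform $\delta_n > 0$; Proposition \ref{hry} then enumerates $\mathcal{Z}(G) = \{z_n\}$ with multiplicities so that $|z_n - t_n| \asymp |c_n|$, whence $\sum_n |z_n - t_n| \lesssim \sum_n |a_n b_n| \le \|a\|\,\|b\| < \infty$. For the converse, given $\{z_n\}$ with $\sum_n |z_n - t_n| < \infty$, I would form the canonical product
$$
G(z) = A(z) \prod_n \frac{z - z_n}{z - t_n},
$$
whose uniform convergence off small neighbourhoods of $T$ again rests on separation and summability. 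Its partial-fraction expansion yields coefficients $c_n = (t_n - z_n)\prod_{k \ne n}(t_n - z_k)/(t_n - t_k)$ with $|c_n| \asymp |z_n - t_n|$, so $(c_n) \in \ell^1$. Choosing $a, b \in \ell^2$ with $a_n \bar b_n = -c_n$ (for instance $a_n = |c_n|^{1/2}$, $b_n = -\overline{c_n}/|c_n|^{1/2}$) and a preliminary shift arranging $0 \notin \{z_n\}$ and $\varkappa \ne 0$ produces a rank-one perturbation whose associated model function is precisely $G$, and by part 3 of Theorem \ref{model} its spectrum is $\{z_n\}$.

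The hard part will be the uniqueness clause in the converse. The functional model determines $\LL$ only through the pair $(\mu_n, G) = (|b_n|^2, G)$, and the rank-one operator $a \otimes b$ is not in general recovered from its diagonal $(a_n\bar b_n)$ alone: on $\co^2$, the two self-adjoint perturbations $(1,1)\otimes(1,1)$ and $(1,i)\otimes(1,i)$ of $\operatorname{diag}(1,2)$ already share a spectrum. I would therefore interpret uniqueness as uniqueness of the model function $G$ --- equivalently, of $(a_n\bar b_n)$ --- once the normalization $G(z)/A(z) \to 1$ along generic directions at infinity is imposed, and deduce it from the converse half of Proposition \ref{hry}. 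A secondary routine point is multiplicity: coincident $z_n$ require Proposition \ref{hry} and Theorem \ref{bas3} to be applied with multiplicity and the Riesz basis supplemented by the derivative reproducing kernels $K_\lambda^{(j)}$, an essentially finite adjustment.
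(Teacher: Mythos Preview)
Your approach mirrors the paper's: reduce to the model \eqref{jjk}, invoke Theorem~\ref{bas3} for the Riesz basis, and read off the enumeration and the converse from Proposition~\ref{hry}. The converse construction and the enumeration argument are essentially identical to the paper's.

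The gap is in your reduction to the cyclic case. When $b_n=0$ on a set $N_2$, the subspace $H_2$ spanned by $\{e_m : m \in N_2\}$ is indeed $\LL$-invariant with $\LL|_{H_2} = \A_2$, but its orthogonal complement $H_1$ is \emph{not} $\LL$-invariant unless the $H_2$-component $a^{(2)}$ of $a$ vanishes: for $u \in H_1$ one has $\LL u = \LL_1 u \oplus (u, b^{(1)})\,a^{(2)}$, and the second summand need not be zero. Consequently the eigenvectors of $\LL$ at the eigenvalues $\lambda_k$ coming from $\LL_1$ are not $f_k \oplus 0$ but $f_k \oplus g_k$ with $g_k = -(f_k, b^{(1)})(\A_2 - \lambda_k I)^{-1} a^{(2)}$, and one must still verify that $\{f_k \oplus g_k\}_{k\in N_1} \cup \{0 \oplus e_m\}_{m\in N_2}$ is a Riesz basis in $H$. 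The paper carries this out by proving $\big\|\sum_k c_k g_k\big\|_{H_2} \lesssim \|(c_k)\|_{\ell^2}$, which uses that $\{\lambda_k\}_{k\in N_1} \cup \{t_m\}_{m \in N_2}$ remains separated. Your one-line reduction skips this computation, so as written the Riesz-basis claim is unproven in the non-cyclic case.

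Your observation on uniqueness is correct and sharp: the pair $(a,b)$ --- indeed the rank-one operator $a \otimes b$ itself --- is not determined by the spectrum alone, as your $2\times 2$ example shows. The paper's own proof in fact establishes only existence (``it remains to choose appropriate $a_n, b_n$''); the uniqueness clause should be read as uniqueness of the characteristic function $G$, equivalently of the sequence $(a_n \bar b_n)$, which is what follows from the converse half of Proposition~\ref{hry}.
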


\begin{proof}
Without loss of generality ${\rm Ker}\, \LL= 0$ and $\varkappa\ne 0$, where $\varkappa$ is given by \eqref{kap}.
In the case when $b_n \ne 0$ for any $n$ the Riesz basis property follows  from Theorem \ref{bas3}.
Eigenvalues of $\LL$ are zeros of the function $G$ given by \eqref{jjk}. By Proposition \ref{hry}, 
there exists an enumeration of the zero set of $G$ 
(counting multiplicities), $\ZZ(G) = \{z_n\}$, such 
that $|z_n -t_n| \asymp |a_nb_n|$, whence $\sum_n |z_n-t_n| <\infty$.
Conversely, by Proposition \ref{hry} and Lemma \ref{bbb}, any sequence $\{z_n\}$ such that 
$\sum_n |z_n-t_n| <\infty$ is a zero set of a function of the form
$$
G(z)  = A(z) \bigg( 1+ \sum_n \frac{c_n}{z- t_n}\bigg)
$$
with $\sum_n |c_n| <\infty$. It remains to choose appropriate $a_n, b_n$.

In the case when $b_n=0$ for some $n$, we formally cannot apply the model from Theorem \ref{model}. 
This case is however easily reduced to the cyclic case. 
Let $\mathbb{N} = N_1\cup N_2$, where $N_2 = \{ n:\ b_n=0\}$. 
Then we can write $\A = \A_1\oplus \A_2$, where $\A_j$ is multiplication by $z$ in $H_j = L^2(\nu_j) \cong\ell^2(N_j)$, 
$\nu_j = \sum_{n\in N_j} \delta_{s_n}$, $j=1,2$.
Now let $\LL =\A+a\otimes b$. With respect to decomposition 
$H = H_1\oplus H_2$ we have $a=a^{(1)}\oplus a^{(2)}$, $b=b^{(1)} \oplus 0$. 

Denote by $(e_m)_{m\in N_2}$ the standard orthogonal basis of $H_2$, 
$(e_m)_n = 0$, $m\ne n$, and $(e_m)_m=1$. It is clear that 
$0\oplus e_m$ is an eigenvector
of $\LL$ corresponding to the eigenvalue $t_m$, $m\in N_2$. 

Consider the operator $\LL_1 = \A_1 + a^{(1)} \otimes b^{(1)}$ on $H_1$. Now $b^{(1)}$
is a cyclic vector for $\A_1$. Its model space is $\mathcal{H}(T_1, A_1, \mu^{(1)})$,
where $T_1=\{t_n\}_{n\in N_1}$, $\mu^{(1)} = \sum_{n\in N_1} \mu_n \delta_{t_n}$,
and $\mu_n = |t_n|^2|b_n|^2$, $n\in N_1$. The corresponding ``characteristic'' function $G_1$ 
is given by
$$
G_1(z)  = \frac{1}{\varkappa} A_1(z) \bigg(1 - \sum_n \frac{a_n \bar b_n}{z- t_n}\bigg) = 
\frac{1}{\varkappa} A_1(z) \bigg(1 - \sum_{n\in N_1} \frac{a_n \bar b_n}{z- t_n}\bigg).
$$
The vector $b^1$ is cyclic and so, by Theorem \ref{bas3},
the normalized eigenvectors $(f_k)$ of $\LL_1$ form a Riesz basis in $H_1$ (to avoid uninteresting technicalities we assume that
all eigenvalues $\lambda_k$ are simple). Note that $\lambda_k$ (the zeros of $G_1$) 
are exactly the zeros of the function \eqref{jjk} which are different from $t_m$, $m\in N_2$.

For $u=u^{(1)}\oplus u^{(2)}$ we have
$\LL u = 
\LL_1 u^{(1)} \oplus \big(\A_2 u^{(2)} + (u^{(1)}, b^{(1)})a^{(2)}\big)$.
If $u$ is an eigenvector of $\LL$ then either $u^{(1)} =0$ (and so $u=0 \oplus e_m$ for some $m$) or $u^{(1)}\ne 0$ and 
then we can assume that $u^{(1)}= f_k$ for some $k$. In this case for $u^{(2)}$ we have the equation:
$$
\A_2 u^{(2)} + (f_k, b^{(1)})a^{(2)} = \lambda_k u^{(2)},
$$
whence $u^{(2)} = - (f_k, b^{(1)}) (\A_2 - \lambda_k I)^{-1} a^{(2)}$.
Note that any $\lambda_k$ is not in the spectrum of $\A_2$ which coincides with $\{t_m\}_{m\in N_2}$.

Thus, the set of eigenvectors of $\LL$ is of the form $\{f_k \oplus g_k\}_{k\in N_1} \cup \{0 \oplus e_m\}_{m\in N_2}$, 
where  $g_k = - (f_k, b^{(1)}) (\A_2 - \lambda_k I)^{-1} a^{(2)}$. 
Recall that a system  $\{h_n\}$ is a Riesz basis in a Hilbert space $H$ if and only if
$\{h_n\}$ is complete and $\|\sum_n c_n h_n\| \asymp \|(c_n)\|_{\ell^2}$, $(c_n)\in\ell^2$.
It is obvious that the eigenvectors of $\LL$ are complete. Since $\{f_k\}$ and $\{e_m\}$ 
are Riesz bases in $H_1$ and $H_2$ respectively, it is sufficient to show
that $\|\sum_k c_k g_k\| \lesssim \|(c_k)\|_{\ell^2}$, $(c_k)\in\ell^2$. This is, obviously, true, since
$$
(g_k)_j = \frac{a_j^{(2)}}{\lambda_k - t_j}(f_k, b^{(1)}),
$$
the sequence $\{(f_k, b^{(1)})\}$ is in $\ell^2$, while $\lambda_k-t_k \to 0$ 
and so the set $\{\lambda_k\}_{k\in N_1} \cup \{t_m\}_{m\in N_2}$\
is separated. 
\end{proof}

\begin{remark}
{\rm 1. Theorem \ref{bas4} extends \cite[Theorems 3.1, 4.1]{hry2} characterizing 
spectra of rank one perturbations of unbounded selfadjoint operators to the case of normal 
\smallskip
operators. 

2. There is a vast literature on Riesz bases of eigenvectors and root vectors of perturbations
of selfadjoint or normal operators with discrete spectrum;
for a detailed survey we refer to \cite{shkal}. It is possible
that our Theorem \ref{bas3} is covered by some more general results.  However, it seems that most of the results on Riesz bases 
of eigenvectors (see, e.g., \cite[Theorems 6.6, 7.1]{shkal})
apply to the case when the unperturbed operator is selfadjoint or normal with the spectrum on a finite system of rays. 
While we look at a very special class of perturbations (i.e., rank one), we do not need the assumptions on the spectrum location.
\smallskip

3. We give a more detailed comparison of
Theorem  \ref{bas3} with \cite[Theorem 7.1]{shkal} (which in a slightly weaker form goes back to \cite{mit}).
In this theorem perturbations $T+B$ of an unbounded selfadjoint operator $T$
with discrete spectrum  are considered such that the following ``local $p$-domination property'' holds:
for the normalized eigenvectors $\phi_n$ of $T$ and $p\in (-\infty, 1)$ one has $\|B \phi_n\| \lesssim |t_n|^p$.
In our situation the local $p$-domination condition is equivalent to $|b_n| \lesssim |t_n|^{p}$,
while the relation between $p$ and the order of the operator from \cite[Theorem 7.1]{shkal}  
means that $p\le -N$ where $N$ is the exponent of power separation
(note that the case $p\ge 0$ corresponds only to operators of order at least 1). 
In \cite{shkal} a somewhat weaker mean separation is used and so the systems of eigenvectors form Riesz bases with brackets.  Thus, Theorem 7.1 from \cite{shkal} says that a rank one perturbation
$\LL = \A+a\otimes b$ of a {\it selfadjoint} operator $\A$
has a Riesz basis of eigenvectors and root vectors if $(a_n) \in\ell^2$ and $(t_n^N b_n) \in \ell^\infty$.
Note also that $\A$ is assumed to be of order greater than $1/2$ which means that 
$N<2$. Theorem \ref{bas3} shows that for rank one perturbations conditions
$(a_n) \in\ell^2$ and $(t_n^N b_n) \in \ell^\infty$ are sufficient for a Riesz basis property
for any normal operator with spectrum in a finite union of strips and for any positive order. 
Under slightly stronger condition $(t_n^N b_n) \in \ell^q$ for some $q\in(0, \infty)$
the result remains true for any normal operator with power separated spectrum without 
any restrictions on its location.}
\end{remark}
\bigskip


\section{Two examples}
\label{examp}

We return to the problem about Riesz bases corresponding to zeros of the functions $B_\gamma$  of the form \eqref{gh}
and \eqref{ghh}. Our first example deals with the case of de Branges spaces (i.e., $T \subset \R$) while the second is 
a specific example of a CdB space corresponding to points on a cross  $\R\times i\R$ and which can be thought of as 
a cross counterpart of the Paley--Wiener space. 

Assume that $T \subset \R$. Then $\hht = \he$ for some Hermite--Biehler class function $E$. 
Moreover, we have $E=A-iB$, where
$$
B(z) = A(z) \bigg(q+ \sum_n \Big(\frac{1}{t_n-z} -\frac{1}{t_n}\Big) \mu_n \bigg)
$$
for some $q \in\R$. As usual we agree that the term in the brackets is simply $-1/z$ in case when $t_n=0$.
Note that the ``mass at infinity'' $p$ (from the representation \eqref{herg}) is zero since 
we assume that $\{K_{t_n}\}$ is an orthogonal basis in $\he$. 
The function $\beta = B/A$ is a Herglotz function in $\cp$.

By the de Branges theory we know that for all $\gamma\in \R$ (except, maybe, one) the zeros
of the functions
$$
B_\gamma(z) = A(z) \bigg(\gamma +q+ \sum_n \Big(\frac{1}{t_n-z} -\frac{1}{t_n}\Big) \mu_n \bigg) =B+\gamma A
$$
generate an orthogonal basis in $\hht=\he$. (Note a  slight difference in the definition of $B_\gamma$ -- now the parameter $q$
is included into $B_0$.) What happens if $\gamma\notin \R$?

Consider the inner function $\Theta = E^*/E = \frac{i-\beta}{i+\beta}$ in $\cp$. 
Since $\Theta$ is meromorphic in $\co$ it is of the form  
$\Theta(z) = e^{iaz} D(z)$, where $a\ge 0$ and $D$ is a meromorphic Blaschke product.
We have 
$$
\frac{B+\gamma A}{E} = \frac{\Theta-1}{2i} +\gamma \frac{\Theta+1}{2} = \frac{1}{2} (\gamma+i + (\gamma-i)\Theta).
$$
Thus, the zeros of $B+\gamma A$ are the solutions of the equation $\Theta = \frac{i+\gamma}{i-\gamma}$. In the case
$\gamma=i$ these are the poles of $\Theta$, i.e., the zeros of $E$. If $\gamma\in \cm$, then all zeros of $B+\gamma A$ are in $\cp$
and vise versa. 

Recall that a Blaschke product $D$ in the upper half-plane with zeros $\{z_n\}_{n\ge 1}$ is said to be interpolating 
if $\inf_n \prod_{k\ne n}\big|\frac{z_n-z_k}{z_n-\bar z_k}\big| >0$. 
By the classical Douglas--Shapiro--Shields theorem (see, e.g., \cite[Lecture VII]{nik}), 
the normalized Cauchy kernels $\{\tilde k_w\}_{w\in \ZZ(D)}$ form a Riesz basis in their closed linear span
$K_D = H^2 \ominus D H^2$ if and only if
$D$ is an interpolating Blaschke product. Here $k_w(z) = (1-\bar w z)^{-1}$ and
$\tilde k_w = (1-|w|^2)^{1/2} k_w$.

For $|\alpha|<1$ denote by $\Theta_\alpha$ the Frostman shift of $\Theta$,
$\Theta_\alpha = \frac{\Theta-\alpha}{1-\overline{\alpha}\Theta}$. 

\begin{proposition}
\label{carl}
Let $\hht  =\he$ be a de Branges space and let $A$, $B$, $B_\gamma$, $\Theta$ be as above. 
Let $\gamma\notin \R$ and put $\alpha = \frac{i+\gamma}{i-\gamma}$.
Then the following are equivalent:
\smallskip

1. The zeros of $B_\gamma$ are simple and the family $\{K_w\}_{w\in \ZZ(B_\gamma)}$
is a Riesz basis in $\he$;
\smallskip

2. $\gamma \in \cm$ and $\Theta_\alpha$ is an interpolating Blaschke product or 
$\gamma\in \cp$ and $\Theta_{1/\alpha}$  is an interpolating Blaschke product.
\end{proposition}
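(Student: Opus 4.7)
The plan is to transfer the problem to the model space $K_\Theta\subset H^2(\cp)$ via the unitary $F\mapsto F/E$ from $\he$. Writing $B+\gamma A=\tfrac{1}{2i}[(i\gamma-1)E+(i\gamma+1)E^*]$ immediately gives $\ZZ(B_\gamma)=\{w:\Theta(w)=\alpha\}$, and an elementary computation shows $|\alpha|<1\Leftrightarrow\gamma\in\cm$ (zeros in $\cp$) while $|\alpha|>1\Leftrightarrow\gamma\in\cp$ (zeros in $\cm$). Under the unitary equivalence, the reproducing kernel $K_w$ corresponds, up to a nonzero scalar $\overline{E(w)}$, to the model-space kernel $k_w^\Theta(z)=(1-\overline{\Theta(w)}\Theta(z))/(2\pi i(\bar w-z))$, so the Riesz-basis question in $\he$ is equivalent to the same question for $\{k_w^\Theta\}_{w\in\ZZ(\Theta-\alpha)}$ in $K_\Theta$.

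I treat first the case $\gamma\in\cm$. At any point $w$ with $\Theta(w)=\alpha$ the kernel factors as
\[
k_w^\Theta(z)=(1-\bar\alpha\,\Theta(z))\cdot k_w(z),
\]
where $k_w(z)=(2\pi i(\bar w-z))^{-1}$ is the bare Cauchy kernel. The multiplier $1-\bar\alpha\Theta$ is outer in $H^\infty$ with $1-|\alpha|\le|1-\bar\alpha\Theta|\le 1+|\alpha|$ on $\R$, so pointwise multiplication by it is bounded above and below on $H^2$. The heart of the argument is to establish that
\[
T:K_{\Theta_\alpha}\longrightarrow K_\Theta,\qquad Tf=(1-\bar\alpha\Theta)f,
\]
is a Hilbert space isomorphism. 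Well-definedness and the two-sided norm bound come from the identity $(1-\bar\alpha\Theta)\Theta_\alpha=\Theta-\alpha$ together with a short direct inner-product check that $f\perp\Theta_\alpha H^2$ implies $(1-\bar\alpha\Theta)f\perp\Theta H^2$. Surjectivity is the harder half: computing the adjoint $T^*$, the equation $T^*g=0$ for $g\in K_\Theta$ translates to $|1-\bar\alpha\Theta|^2 g\in(\Theta-\alpha)H^2$ on the boundary, and dividing by the outer invertible factor $1-\bar\alpha\Theta$ exhibits $g$ as an element of $\Theta H^2\cap K_\Theta=\{0\}$.

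Once $T$ is in place the proof finishes quickly. At a zero $w$ of $\Theta_\alpha$ the Cauchy kernel $k_w$ is itself the reproducing kernel of $K_{\Theta_\alpha}$ at $w$, and $Tk_w=k_w^\Theta$, so the normalized systems in $K_{\Theta_\alpha}$ and $K_\Theta$ are Riesz-equivalent under $T$. Since the zeros of $B_\gamma$ are simple if and only if the zeros of $\Theta_\alpha$ are simple (the denominator $1-\bar\alpha\Theta$ is nonvanishing at such points, with value $1-|\alpha|^2$), the Douglas--Shapiro--Shields theorem recalled just before the proposition yields that $\{\tilde k_w\}_{w\in\ZZ(\Theta_\alpha)}$ is a Riesz basis of $K_{\Theta_\alpha}$ iff $\Theta_\alpha$ is an interpolating Blaschke product. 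This gives (1)$\Leftrightarrow$(2) for $\gamma\in\cm$.

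For $\gamma\in\cp$ the zeros of $B_\gamma$ lie in $\cm$, and I use the companion model $\he\cong K_{\Theta^\vee}\subset H^2(\cm)$ via $F\mapsto F/E^*$, where $\Theta^\vee=E/E^*$ is inner in $\cm$. As meromorphic functions $\Theta^\vee(z)=1/\Theta(z)$, so $\Theta(w)=\alpha$ in $\cm$ is the same as $\Theta^\vee(w)=1/\alpha$ with $|1/\alpha|<1$, which is exactly the setting already handled (with $\Theta$ replaced by $\Theta^\vee$ and $\alpha$ by $1/\alpha$). The meromorphic symmetry $\Theta(\bar z)=1/\Theta(z)$ also gives $(\Theta^\vee)_{1/\alpha}(\bar z)=\Theta_{1/\alpha}(z)$, so the condition ``$(\Theta^\vee)_{1/\alpha}$ is an interpolating Blaschke product in $\cm$'' is identical to ``$\Theta_{1/\alpha}$ is an interpolating Blaschke product in $\cp$'', producing the statement in the proposition. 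Throughout, the main obstacle is verifying that $T$ is a Hilbert space isomorphism; once that is secured, everything else reduces to classical Cauchy-kernel Riesz-basis theory.
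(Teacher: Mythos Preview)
Your argument is essentially the paper's: transfer to $K_\Theta$, relate $\{k_w^\Theta\}_{\Theta(w)=\alpha}$ to Cauchy kernels in $K_{\Theta_\alpha}$ via multiplication by $1-\bar\alpha\Theta$, then invoke Douglas--Shapiro--Shields. Your map $T$ is (up to the constant $(1-|\alpha|^2)^{1/2}$) the inverse of the Crofoot transform $J:f\mapsto (1-|\alpha|^2)^{1/2}(1-\bar\alpha\Theta)^{-1}f$ that the paper simply cites as a known unitary $K_\Theta\to K_{\Theta_\alpha}$; you reconstruct this fact by hand, which is fine but not needed.

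For the case $\gamma\in\cp$ the two routes diverge slightly. The paper observes that $F\mapsto F^*$ is an isometry of $\he$ carrying $K_w$ to $K_{\bar w}$, so one may replace $\ZZ(B_\gamma)\subset\cm$ by its reflection in $\cp$ and reapply the first case. You instead pass to the lower-half-plane model $K_{\Theta^\vee}$ via $F\mapsto F/E^*$; this is a legitimate alternative and leads to the same conclusion. One caution: your stated symmetry ``$\Theta(\bar z)=1/\Theta(z)$'' is not quite right---since $E^*(\bar z)=\overline{E(z)}$ one gets $\Theta(\bar z)=1/\overline{\Theta(z)}$, and correspondingly $(\Theta^\vee)_{1/\alpha}$ in $\cm$ is related under $z\mapsto\bar z$ to $\Theta_{1/\bar\alpha}$ rather than $\Theta_{1/\alpha}$ in $\cp$. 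This does not affect the structure of the proof, but the identification in your last paragraph should be adjusted accordingly.
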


\begin{proof}
Assume that $|\alpha|<1$ and consider the 
normalized reproducing kernels of the model space $K_\Theta = E^{-1} \he$ at the points $w$ such that $\Theta(w) =\alpha$:
$$
\tilde k_w^{\Theta}(z)  = \Big(\frac{1-|w|^2}{1-|\Theta(w)|^2}\Big)^{1/2} \frac{1-\overline{\Theta(w)}\Theta(z)}{1-\bar w z} = 
\Big(\frac{1-|w|^2}{1-|\alpha|^2}\Big)^{1/2}  \frac{1-\overline{\alpha}\Theta(z)}{1-\bar w z}.
$$ 
A well-known transform (sometimes referred to as Crofoot's transform) 
$$
J: f\mapsto (1-|\alpha|^2)^{1/2} \frac{f}{1-\overline{\alpha}\Theta}
$$
maps $K_\Theta$ unitarily on the model space $K_{\Theta_\alpha}$. 
Clearly, $J$ maps the family $\{\tilde k_w^{\Theta}\}_{w\in \ZZ(\Theta-\alpha)}$ to 
$\{\tilde k_w\}_{w\in \ZZ(\Theta_\alpha)}$. Thus, we conclude that
$\{\tilde k_w^{\Theta}\}_{w\in \ZZ(\Theta-\alpha)}$ is a Riesz basis in $K_\Theta$ if and only if 
$\{\tilde k_w\}_{w\in \ZZ(\Theta_\alpha)}$ is a Riesz basis in $K_{\Theta_\alpha}$ if and only if
$\Theta_\alpha$ is an interpolating Blaschke product.  It remains to note that $\{\tilde K_w\}_{w\in \ZZ(B_\gamma)}$
 is a Riesz basis in $\he$ if and only if
$\{\tilde k_w^{\Theta}\}_{w\in \ZZ(\Theta-\alpha)}$ is a Riesz basis in $K_\Theta$ (see formula \eqref{repr}). 

In the case when $\gamma\in \cp$ and $|\alpha|>1$ note that the systems
 $\{\tilde K_w\}_{w\in \ZZ(B_\gamma)}$ and  $\{\tilde K_{\overline{w}}\}_{w\in \ZZ(B_\gamma)}$
simultaneously form a Riesz basis in $\he$ or not.
\end{proof}

\begin{remark}
{\rm Given an interpolating Blaschke product $\Theta$, it is a difficult problem to describe the set of parameters $\alpha$
for which $\Theta_\alpha$ also is interpolating. A. Nicolau \cite{nic1} gave a description of Carleson--Newman (e.g., finite product of interpolating) Blaschke products for which any Frostman shift is also a Carleson--Newman Blaschke product. He also gave
an example, for any $m\in (0,1)$, of an interpolating Blaschke product whose shifts with $|\alpha| \ge m$ are not Carleson--Newman.
On the other hand, if $\Theta$ is a {\it thin} Blaschke product (i.e., $ \prod_{k\ne n}\big|\frac{z_n-z_k}{z_n-\bar z_k}\big| \to 1$,
$n\to \infty$),
then all its Frostman shifts are also thin and thus interpolating up to possible gluing of a finite number of zeros. See \cite{nic2, nic3}
for further results and references therein. }
\end{remark}

Let $T = \Z +\frac{1}{2}$, $A(z) =\frac{1}{\pi} \cos (\pi z)$ and $\mu_n =1$ for any $t_n\in T$.
Then the space $\hh(T, A, \mu)$ coincides isometrically with the Paley--Wiener space $PW_\pi$. 
It is well known and easy to see that the systems of reproducing kernels corresponding to the zeros of  
$$
B_\gamma(z) = A(z) \bigg(\gamma + \sum_{n\in \Z} \Big(\frac{1}{n+\frac{1}{2}-z} - \frac{1}{n+\frac{1}{2}} \Big)\bigg) = 
\frac{\gamma}{\pi} \cos \pi z  + \sin\pi z
$$ 
form a Riesz basis in $\hht$ unless $\gamma = \pm \pi i$. In the case when $\gamma = \pm \pi i$ the function 
$B_\gamma(z) = \pm i \exp(\mp i \pi z)$
does not vanish and so the corresponding system is empty. 

Now we consider a certain cross analogue of the Paley--Wiener space.
Let 
\begin{equation}
\label{comlos}
T = \Big(\Z +\frac{1}{2}\Big) \cup i \Big(\Z +\frac{1}{2}\Big), \qquad A(z) =\frac{1}{\pi} \cos (\pi z) \cos (\pi i z)
\end{equation}
and $\mu_n =1$ for any $t_n\in T$. Then
$$
\frac{B_\gamma(z)}{A(z)} = \gamma + \sum_{t_n\in T} \bigg(\frac{1}{t_n-z} - \frac{1}{t_n}\bigg) =
\gamma +\pi \, \tg (\pi z) + \pi i\, \tg (\pi i z).
$$
It is easy to see that for any $\gamma$ all zeros of $B_\gamma$ (except possibly a finite number) 
are simple (see the discussion below). 

\begin{proposition}
\label{tangens}
Let $\hht$ be a CdB space with parameters \eqref{comlos}. Then the system of normalized kernels
$\{\tilde K_w\}_{w\in \ZZ(B_\gamma)}$ \textup(with added reproducing kernels in case of multiple zeros\textup) is complete 
for any $\gamma \in \co$, and is a Riesz basis in 
$\hht$ if and only if $\gamma \notin \{\pm \pi(1+i),\, \pm \pi(1-i)  \}$.
\end{proposition}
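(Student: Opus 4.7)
The plan combines the explicit representation of $B_\gamma$ as a four-term exponential polynomial with the orthogonal decomposition $\hht=\hh_1\oplus\hh_2$, where $\hh_j=\overline{\mathrm{span}}\{K_{t_n}:t_n\in T_j\}$ with $T_1=\Z+1/2$, $T_2=i(\Z+1/2)$. Using $\tan(\pi iz)=i\tanh(\pi z)$ one rewrites $B_\gamma(z)=\frac{\gamma}{\pi}\cos(\pi z)\cosh(\pi z)+\sin(\pi z)\cosh(\pi z)-\cos(\pi z)\sinh(\pi z)$, and expanding in exponentials,
\begin{equation*}
4\pi B_\gamma(z)=\sum_{\varepsilon\in\{\pm1\}}\bigl[(\gamma-\varepsilon\pi(1+i))\,e^{\varepsilon(1+i)\pi z}+(\gamma-\varepsilon\pi(1-i))\,e^{\varepsilon(1-i)\pi z}\bigr].
\end{equation*}
The four exceptional values of $\gamma$ are exactly those that annihilate one of the four coefficients. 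A Phragm\'en--Lindel\"of analysis in each of the four axial sectors (where two adjacent exponentials dominate) places the zeros of $B_\gamma$ asymptotically on four lines parallel to the coordinate axes, at levels determined by the equations $e^{2i\pi z}=-(\gamma-\pi(1-i))/(\gamma-\pi(1+i))$ and its three analogues; each sequence has unit linear density when the relevant pair of coefficients is nonzero and becomes empty when one vanishes.

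For completeness at any $\gamma\in\co$, I would argue as in the proof of Theorem~\ref{comp1}. If $F\in\hht$ vanishes on $\mathcal Z(B_\gamma)$ with multiplicity, set $H=F/B_\gamma$, entire of finite order. The bound $\|K_z\|^2=|A(z)|^2\sum_n\mu_n/|z-t_n|^2\lesssim|A(z)|^2$ for $z$ bounded away from $T$ gives $|F(z)|\lesssim|A(z)|$; the asymptotic analysis above yields $|B_\gamma(z)|\gtrsim|A(z)|$ outside a set $\Omega$ of zero area density (even in exceptional cases, the one axial sector where a single exponential dominates unopposed still has zero area density in the plane). Theorem~\ref{dens} therefore forces $H\equiv c$, and if $c\ne 0$ then $B_\gamma\in\hht$; but the coefficients of $B_\gamma$ in the orthonormal basis $\{F_n\}$ are all $\mu_n^{1/2}\equiv 1$, not in $\ell^2$ since $\sum_n\mu_n=\infty$. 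Hence $F=0$.

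For the Riesz basis property with $\gamma$ non-exceptional, the maps $f_1\mapsto\phi_1$ with $f_1=\cosh(\pi z)\phi_1$ and $f_2\mapsto\phi_2$ with $f_2=\cos(\pi z)\phi_2$ are isometric isomorphisms $\hh_1\to PW_\pi$ and $\hh_2\to\hh(T_2,\cosh(\pi z)/\pi,1)$; under these, the projections of $K_w$ transport to $\overline{\cosh(\pi w)}\,k_w^{PW}$ and $\overline{\cos(\pi w)}\,k_w^{\mathrm{rot}}$. Split $\mathcal Z(B_\gamma)=S_h\sqcup S_v$ into its horizontal and vertical halves. The horizontal half lies in a horizontal strip as a shifted unit-density lattice, so by Proposition~\ref{carl} (or the de~Branges orthogonal-basis statement when $\gamma\in\R$) the corresponding normalized kernels form a Riesz basis in $PW_\pi$; symmetrically for the vertical half. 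The cross-Gram entries admit the partial-fraction identities
\begin{equation*}
\sum_{T_1}\frac{1}{(\bar w'-\bar t_n)(w-t_n)}=\frac{\pi(\tan(\pi w)-\tan(\pi\bar w'))}{w-\bar w'},\qquad \sum_{T_2}\frac{1}{(\bar w'-\bar t_n)(w-t_n)}=\frac{\pi(\tanh(\pi\bar w')+\tanh(\pi w))}{w+\bar w'},
\end{equation*}
giving $|G_{w,w'}|=O(1/\sqrt{|w|^2+|w'|^2})$ for $w\in S_h$, $w'\in S_v$, so the off-diagonal block is bounded on $\ell^2$ by a Hilbert-matrix estimate. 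A Schur-complement argument combining this bound with the Riesz property of the diagonal blocks and the completeness of the whole system yields the Riesz basis in $\hht$.

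For exceptional $\gamma$, one axial sequence is asymptotically empty, so $\#(\mathcal Z(B_\gamma)\cap D(0,R))\sim3R$ while $\#(T\cap D(0,R))\sim4R$; since any Riesz basis of normalized reproducing kernels in $\hht$ must share the counting function of the orthonormal basis $\{\tilde K_{t_n}\}$ up to $O(1)$, the Riesz property fails. The main obstacle is the final Riesz-basis step: the off-diagonal Gram block, though bounded by the Hilbert-matrix estimate, is not compact, so the Riesz conclusion does not follow from a Bari-type quadratic-perturbation argument, and one must verify via Schur complement that the full Gram remains bounded below; the borderline case $\gamma\in\R\cup i\R$, where two of the four asymptotic lines degenerate onto the coordinate axes themselves, is what forces invoking Proposition~\ref{carl} in its density-one Hermite--Biehler form rather than a generic interpolating Blaschke product argument.
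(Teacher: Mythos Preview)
Your proposal contains several genuine gaps that prevent it from being a complete proof.

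\textbf{Completeness for exceptional $\gamma$.} Your claim that $|B_\gamma|\gtrsim|A|$ outside a set of zero area density fails precisely when $\gamma$ is exceptional. Take $\gamma=\pi(1-i)$: then the coefficient of $e^{(1-i)\pi z}$ vanishes, and since this exponential dominates $A$ throughout the open first quadrant, one has $|B_\gamma(z)/A(z)|\asymp e^{-2\pi\min(x,y)}\to 0$ there. A full quadrant has area density $1/4$, not zero, so Theorem~\ref{dens} cannot be invoked. The paper's argument is different and avoids this: since $f/A\to 0$ in all four sectors $\{\delta<\arg z-k\pi/2<\pi/2-\delta\}$ and $B_\gamma/A$ has a nonzero limit in at least three of them, the entire quotient $h=f/B_\gamma$ tends to zero in three sectors of total opening exceeding $\pi$; as $h$ has order at most $1$, a Phragm\'en--Lindel\"of argument forces $h\equiv 0$.

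\textbf{Zero structure and the failure argument.} Your picture of the exceptional case is wrong: when $\gamma=\pi(1-i)$, \emph{two} axial sequences (near the positive real and positive imaginary half-axes) disappear, and they are replaced by a diagonal sequence $w_k=(k/2-1/8)(1+i)+O(e^{-2\pi k})$ in the first quadrant. So the count is $\sim(2+\sqrt{2})R$, not $3R$. More seriously, your assertion that any Riesz basis of normalized reproducing kernels must share the counting function of $T$ up to $O(1)$ is nontrivial in this setting and is not justified. The paper instead computes directly that along the diagonal sequence $\|K_{w_k}\|\asymp|A(w_k)|/\sqrt{k}$, so for $f=A(z)/(z-1/2)$ one gets $|(f,\tilde K_{w_k})|\asymp\sqrt{k}/|w_k|\asymp k^{-1/2}$, which is not in $\ell^2$; hence the Bessel map is unbounded and the system cannot be a Riesz basis.

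\textbf{The Schur-complement step.} You correctly identify this as the main obstacle, but you do not carry it out, and it is not clear that it can be. Boundedness of the off-diagonal Gram block is not enough: you need the full Gram to be bounded \emph{below}. The diagonal block $G_{hh}$ itself already mixes a $T_1$-part (the PW Gram) with a $T_2$-part (another Hilbert-type matrix), and there is no reason the latter does not destroy the lower bound of the former. Also, invoking Proposition~\ref{carl} for $S_h$ is not legitimate: $S_h$ is not the zero set of any $B_\gamma$ function for $PW_\pi$.

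The paper's route is cleaner: it verifies Bari's four conditions directly. Completeness of the biorthogonal system $\{B_\gamma/(B_\gamma'(w)(z-w))\}$ follows from completeness of $\{K_w\}$ by a general argument valid in any CdB space. Boundedness of both $J:f\mapsto((f,\tilde K_w))$ and $\tilde J$ is shown to reduce, via $\|K_w\|\asymp|A(w)|\asymp|B_\gamma'(w)|$, to boundedness of the single operator $(c_n)\mapsto\big(\sum_n c_n/(w-t_n)\big)_{w\in\mathcal Z(B_\gamma)}$ on $\ell^2$, which is a small perturbation of standard discrete Hilbert and Hankel transforms.
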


\begin{proof}
For the proof of the Riesz basis property we use the classical Bari's theorem (see, e.g., \cite[Lecture VI]{nik}): a normalized 
system $\{x_n\}$ in a Hilbert space $H$ 
is a Riesz basis if and only if:

a) $\{x_n\}$ is complete and minimal;

b) its biorthogonal system $\{y_n\}$ its complete; 

c) the map $J: x\mapsto (x, x_n)$ is a bounded linear map from $H$ to $\ell^2$; 

d) the map $\tilde J: x\mapsto (x, y_n)$ is a bounded linear map from $H$ to $\ell^2$.
\medskip
\\
{\it Step  1. Completeness of $\{\tilde K_w\}_{w\in \ZZ(B_\gamma)}$.}
Note that
\begin{equation}
\label{comlos1}
\frac{B_\gamma(z)}{A(z)} = \gamma +\frac{\pi}{i} \frac{e^{2\pi i z} - 1}{e^{2\pi i z} +1}
- \pi \frac{e^{2\pi  z} - 1}{e^{2\pi  z} +1}. 
\end{equation}
Then for any $\delta>0$ the ratio $B_\gamma/A$ tends respectively to 
$\gamma +\pi(-1+i)$, $\gamma +\pi(1+i)$, $\gamma +\pi(1-i)$, and 
$\gamma +\pi(-1-i)$, when $|z|\to \infty$ in each of the angles $\{\delta <\arg z<\pi/2 - \delta\}$,
 $\{\pi/2+\delta <\arg z<\pi - \delta\}$, $\{\pi+\delta <\arg z<3\pi/2 - \delta\}$, 
and $\{3\pi/2+\delta <\arg z<2\pi - \delta\}$. Now, if $f = A\sum_{t_n\in T} \frac{c_n}{z-t_n} \in \hht$ is orthogonal 
to $\{\tilde K_w\}_{w\in \ZZ(B_\gamma)}$, then we can write $f=B_\gamma h$ for some entire function $h$.
Since $f/A$ tends to zero in each of the above four angles, we conclude that $h$ tends to zero at least in three of them. 
Since $h$ must be a function of order at most 1, we conclude that it is zero. 
\medskip
\\
{\it Step 2. Completeness of the biorthogonal system.} It should be noted that if the system 
$\{K_w\}_{w\in \ZZ(B_\gamma)}$ is complete in some CdB space $\hht$, then its biorthogonal
is always complete. We do not use here the special form of $\hht$. Indeed, for any $w$ such that
$B_\gamma(w) = 0$ we have
$$
\frac{B_\gamma(z)}{z-w} = A(z)\sum_n \frac{\mu_n}{(w-t_n)(z-t_n)} \in \hht.
$$

From now on, to avoid uninteresting technicalities, we assume that all zeros of $B_\gamma$ are simple. 
Then the biorthogonal system to $\{K_w\}_{w\in \ZZ(B_\gamma)}$  
is given  by $\big\{ \frac{B_\gamma(z)}{B'_\gamma(w)(z-w)} \big\}$. Assume that $f=A\sum_n \frac{c_n \mu_n^{1/2}}{z-t_n}$
is orthogonal to this system. Hence, for any $w\in \ZZ(B_\gamma)$ one has
$$
\sum_n \frac{\bar c_n \mu_n^{1/2}}{w-t_n} =0.
$$
Therefore, the function $f^*$ defined by $f^*(z) = A(z) \sum_n \frac{\bar c_n \mu_n^{1/2}}{z-t_n}$ belongs to $\hht$
and vanishes on $\ZZ(B_\gamma)$, a contradiction with completeness of
$\{K_w\}_{w\in \ZZ(B_\gamma)}$ unless $c_n=0$ for any $n$.
\medskip
\\
{\it Step 3. Zeros of $B_\gamma$ when $\gamma \notin \{\pm \pi(1+i),\, \pm \pi(1-i)\}$.}
Analyzing the equation $B_\gamma = 0$ using the representation \eqref{comlos1} and the Rouch\'e theorem 
it is easy to see that in the case
$\gamma \notin \{\pm \pi(1+i),\, \pm \pi(1-i)\}$ its solutions consist of four series of zeros. Namely there exist 
$\alpha_j, \beta_j \in \R$ (depending on $\gamma$),
$j=1, \dots, 4$, 
such that all sufficiently large zeros of $B_\gamma$ are of one of the following form:
\begin{equation}
\label{comlos2}
\begin{aligned}
& k + \alpha_1 +i\beta_1 + \delta_{1k}, \quad ik + i\alpha_2 -\beta_2 + \delta_{2k}, \\
-& k + \alpha_3 +i\beta_3 + \delta_{3k},
\quad -ik + i\alpha_4 -\beta_4 + \delta_{4k},
\end{aligned}
\end{equation}
where $k\in \N$ is sufficiently large and $\delta_{jk} = O(e^{-2\pi k})$.
Note also that for any $\gamma\in\co$ we have $\ZZ(B_\gamma) \cap T = \emptyset$
and, moreover, $\alpha_j +i\beta_j \notin \Z+\frac{1}{2}$.
\medskip
\\
{\it Step 4. Boundedness of $J$ and $\tilde J$.}
Recall that $\|K_w\|^2 = |A(w)|^2 \sum_n \frac{\mu_n}{|t_n-w|^2}$. 
From the representations \eqref{comlos2} it follows that, for any fixed
$\gamma \notin \{\pm \pi(1+i),\, \pm \pi(1-i)\}$ one has $\|K_w\| \asymp |A(w)|$, 
$w\in \ZZ(B_\gamma)$. Since, for $f=A\sum_n \frac{c_n}{z-t_n} \in \hht$,
$$
(f, \tilde K_w) = \frac{A(w)}{\|K_w\|} \sum_n \frac{c_n}{w-t_n},
$$
we see that the boundedness of $J$ is equivalent to the boundedness of the operator
\begin{equation}
\label{romen}
(c_n) \in\ell^2 \mapsto \Big(\sum_n \frac{c_n}{w-t_n}\Big)_{w\in \ZZ(B_\gamma)}.
\end{equation}
In view of the asymptotic representation \eqref{comlos2} of $\ZZ(B_\gamma)$ this operator is a small perturbation 
of discrete Hilbert and Hankel-type operators $(c_n)_{n\in \Z} \mapsto \big(\sum_{n\ne k} \frac{c_n}{n-k}\big)_{k\in \Z}$ and 
$(c_n)_{n\in \Z} \mapsto \big(\sum_{n\ne 0} \frac{c_n}{n+i k}\big)_{k\in \Z}$, and thus is bounded. 

The biorthogonal system to $\{\tilde K_w\}_{w\in \ZZ(B_\gamma)}$ is given by 
$\big\{\frac{\|K_w\| B_\gamma(z)}{B'_\gamma(w)(z-w)} \big\}$. Then, for
$f=A\sum_n \frac{c_n}{z-t_n} \in \hht$, we have (by \eqref{skal}) 
$$
\Big(\frac{\|K_w\| B_\gamma(z)}{B'_\gamma(w)(z-w)}, f \Big) = \sum_n 
\frac{\|K_w\| B_\gamma(t_n)}{A'(t_n) B'_\gamma(w)}\cdot \frac{\bar c_n}{t_n-w}. 
$$
It is easy to see that $|A'(t_n)| \asymp |B_\gamma(t_n)|$ and,
from the asymptotics \eqref{comlos2}, that  $\|K_w\| \asymp |A(w)| \asymp |B'_\gamma(w)|$, $w\in \ZZ(B_\gamma)$.
Thus, the boundedness  of $\tilde J$ is also equivalent to the boundedness of the operator \eqref{romen}.
\medskip
\\
{\it Step 5. Case of $\gamma \in \{\pm \pi(1+i),\, \pm \pi(1-i)\}$.}
In this case two of series of zeros \eqref{comlos2} glue together. Let us consider the case
$\gamma = \pi(1-i)$. Then it is easy to see that for all sufficiently large zeros in the first quadrant
are of the form
$$
\Big(\frac{k}{2} - \frac{1}{8}\Big)(1+i) +O(e^{-2\pi k})
$$
where $k\in \N$ is sufficiently large. We also have two series of zeros
$$
-k + \frac{5}{8} +\frac{i}{4\pi} \log 2, \qquad -ki + \frac{5}{8}i +\frac{1}{4\pi} \log 2,
$$
as $k\to \infty$.

It is easy to see that for the zeros $w_k = \big(\frac{k}{2} - \frac{1}{8}\big)(1+i) +O(e^{-2\pi k})$ (with sufficiently large $k$)
one has 
$$
\|K_{w_k}\|^2 = |A(w_k)|^2 \sum_n \frac{\mu_n}{|t_n-w_k|^2} \asymp
|A(w_k)|^2 \sum_{n=1}^\infty \frac{1}{(k-n)^2 +k^2}\asymp \frac{|A(w_k)|^2}{k}.
$$
Take $f(z) = \frac{A(z)}{z- 1/2}$. 
Then $|(f, \tilde K_{w_k})| \asymp \frac{k^{1/2}}{|w_k - 1/2|}$, $k\to \infty$,
whence $\big( (f, \tilde K_{w_k}) \big) \notin\ell^2$ and $J$ is trivially unbounded. 

The same is true for other exceptional values of $\gamma$.
\end{proof}


\section{Nearly invariant subspaces of finite codimension}
\label{riv}

In this section we prove  Theorem \ref{dom}.

\begin{proof}[Proof of Theorem \ref{dom}]
{\bf (iii) $\Longrightarrow$ (ii).} Assume that $\sum_n |t_n|^{2N-2}\mu_n <\infty$. Then
the functions $B_j$ given by \eqref{bj} (i.e., $B_j  \longleftrightarrow (t_n^j \mu_n^{1/2})$) belong to $\hht$ for $j=0, \dots, N-1$.
Let us show that $B_j \perp {\rm clos}\,\mathcal{D}_{z^N}$. Let
$f\in \mathcal{D}_{z^N}$, $f \longleftrightarrow (c_n)$. Then $z^k f  \longleftrightarrow (t_n^k c_n)$, $1\le k\le N$.
 
Assume that $0\notin T$ (the general case follows trivially by a shift). 
Then, for $j=0, \dots, N-1$,
$$
(f, B_j) = \sum_n c_n t_n^j \mu_n^{1/2} = \bigg(\sum_n \frac{c_n t^{j+1}\mu_n^{1/2}}{t_n-z}\bigg) \bigg|_{z=0} = 
-\frac{z^{j+1}f(z)}{A(z)}\bigg|_{z=0} = 0.
$$
The functions $B_j$ are linearly independent and so the codimension of ${\rm clos}\,\mathcal{D}_{z^N}$ 
is at least $N$. To show that its codimension is at most $N$, assume that 
$g\in \hht$, $g \longleftrightarrow (d_n)$ and $g\perp \mathcal{D}_{z^N}$. 
We already know that $B_j \perp {\rm clos}\,\mathcal{D}_{z^N}$, $j=0, \dots, N-1$. Subtracting from $g$
a linear combination of $B_j$ we can assume that $d_1 = \dots =d_N=0$.
Next, for any $m>N$, one has
$$
g\perp \frac{A(z)}{(z-t_m)\prod_{k=1}^N (z-t_k)},
$$
whence
$$
d_m \mu_m^{-1/2} \prod_{k=1}^N (t_m-t_k)^{-1} +\sum_{k=1}^N 
d_k \mu_k^{-1/2} (t_k-t_m)  \prod_{1\le l\le N, l\ne k} (t_k-t_l)^{-1} = 0.
$$
We concude that $d_m =0$ for $m>N$, whence $g=0$. Thus, 
$(\mathcal{D}_{z^N})^\perp = {\rm Span}\, \{B_0, \dots, B_{N-1}\}$.
\bigskip
\\
{\bf (ii) $\Longrightarrow$ (i).} All we need is to show
that ${\rm clos}\,\mathcal{D}_{z^N}$ is nearly invariant. Let $f\in {\rm clos}\,\mathcal{D}_{z^N}$  and $f(\lambda) =0$.
Choose $f_n, g \in  \mathcal{D}_{z^N}$ such that $f_n\to f$ and  $g(\lambda)=1$. Then
$$
\frac{f_n - f_n(\lambda) g}{z-\lambda} \to \frac{f}{z-\lambda}.
$$
We use the fact that $f_n(\lambda) \to f(\lambda)$ and that the map $h \to \frac{h}{z-\lambda}$ is a bounded operator from the subspace
$\{h\in\hht: \ h(\lambda) =0\}$ to $\hht$. 
\medskip

While the proof essentially goes by (iii) $\Longrightarrow$ (ii) $\Longrightarrow$ (i) $\Longrightarrow$ (iii), we also 
discuss the implication (iii) $\Longrightarrow$ (i) since its conclusion will be needed later on. 
\bigskip
\\
{\bf (iii) $\Longrightarrow$ (i).} Define $\hh_0 = \{B_0, \dots, B_{N-1}\}^\perp$. We need to show that
it is nearly invariant, i.e., if $f\perp B_0, \dots, B_{N-1} $ and $f(\lambda) = 0$, then $\frac{f(z)}{z-\lambda} \perp B_0, \dots, B_{N-1}$.
We have, for $\lambda\notin T$,
$$
\begin{aligned}
\Big(\frac{f}{z-\lambda}, B_j \Big)  = \sum_n \frac{c_n t_n^j \mu_n^{1/2}}{t_n-\lambda} & = 
\sum_n \frac{c_n (t_n^j -\lambda^j) \mu_n^{1/2}}{t_n-\lambda} \\
& =\sum_{l=0}^{j-1} \lambda^{j-1-l} \sum_n c_n 
t_n^l \mu_n^{1/2} =0.
\end{aligned}
$$
The case when $\lambda\in T$ is analogous. 
\bigskip
\\
{\bf (i) $\Longrightarrow$ (iii).} This implication is slightly more complicated. Assume that $\hh_0$ is of codimension $N$
and let $\hh_0^\perp = {\rm Span}\, \{g_1, \dots, g_{N}\}$. Let $g_j \longleftrightarrow (d_n^j)$. Since $\hh_0$
is nearly invariant we see that for any $f\perp g_1, \dots, g_N$ such that $f(\lambda) = 0$ 
we have $\frac{f}{z-\lambda} \perp g_1, \dots, g_N $. Equivalently, this means that 
$$
\sum_n c_n \overline{d_n^j} = 0, \ \ j=1, \dots, N, \quad \sum_n \frac{c_n \mu_n^{1/2}}{t_n - \lambda} =0 \quad \Longrightarrow
\quad 
\sum_n \frac{c_n \overline{d_n^j}}{t_n-\lambda} = 0, 
$$
that is, for any $k$,
$$
\Big(\frac{d_n^k}{\bar t_n - \bar \lambda} \Big)  \in 
 {\rm Span}\,\bigg\{ \Big(\frac{\mu_n^{1/2}}{\bar t_n - \bar \lambda} \Big),   (d_n^j), \ 1\le j\le N \bigg\}.
$$
Thus, there exists a matrix $\Gamma = (\gamma_{kj})_{1\le k,j\le N}$ and $\beta_k$, $1\le k\le N$, such that
for any $n$
$$
\frac{d_n^k}{\bar t_n - \bar \lambda} = \sum_{j=1}^N \gamma_{kj} d_n^j + \beta_k 
\frac{\mu_n^{1/2}}{\bar t_n - \bar \lambda}, \qquad 1\le k\le N.
$$
If we denote by $d_n$ the vector (column) $(d_n^j)_{j=1}^N$, and put $\beta = (\beta_k)_{k=1}^N$,
the equation takes form 
$$
(I- (\bar t_n - \bar \lambda) \Gamma) d_n =  \mu_n^{1/2} \beta. 
$$
Note that $\Gamma$ and $\beta$ depend on $\lambda$, but do not depend on $n$. From now on we assume that 
$\lambda =0$ and $0 \notin T$. Then the equation takes the form 
\begin{equation}
\label{mmd}
(I- \bar t_n \Gamma) d_n =  \mu_n^{1/2} \beta. 
\end{equation}

The solutions of the equation $(I-z\Gamma) u = v$ in $\co^N$ are rational functions of $z$ whose poles are the zeros of
${\rm det}\, (I-z\Gamma)$ or, equivalently, $z^{-1}$ where $z$ is an eigenvalue of $\Gamma$. 
Since we know that equation \eqref{mmd} has the solution, we conclude that for the values of the parameter
$z= \bar t_n $ and
for the right hand side $\mu_n^{1/2} \beta$ either there are no singularities or they cancel. However, 
we cannot apriory exclude that $\bar t_n^{-1}$ is an eigenvalue of $\Gamma$. 
Therefore, for such $n$ the solution is not unique. 
Denote by $\nn$ the (obviously finite) set of indices $n$ such 
that $\bar t_n^{-1}$ is an eigenvector of $\Gamma$. 
To summarize, the vectors $d_n = (d_n^j)_{j=1}^N$ 
satisfying \eqref{mmd} will be necessarily of the form 
$$
d_n^j = \mu_n^{1/2} R_j (\bar t_n) +u_n^j,
$$
where $R_j$ is a rational function independent on $n$, $R_j = P_j/Q_j$, where $P_j$, $Q_j$ are coprime polynomials such that
${\rm deg}\, P_j \le N-1$, ${\rm deg}\, Q_j \le N$ and $(u_n^j)_{n\ge 1}$ is a finite vector with possible nonzero
components only at $n\in \nn$. We now analyze these solutions in more detail.
\medskip
\\
{\it Step 1.} At this step we assume that $j$ is fixed. We claim that $Q_j$ is a constant. 
Assume that this is not the case and so $R_j$ has at least one pole $\bar \gamma_1$. 
Note that $\gamma_1\notin T$. Then we have 
\begin{equation}
\label{glo}
d_n^j = \mu_n^{1/2}\bigg(p_j (\bar t_n) +\sum_{l=1}^L \sum_{k=1}^{m_l} \frac{c_{lk}}
{(\bar t_n -\bar \gamma_l)^{k}}\bigg) 
+u_n^j,
\end{equation}
where $p_j$ is a polynomial of degree at most $N-1$, $\gamma_l$ are distinct numbers in $\co\setminus T$ and
$L$ is at least 1. We will obtain a contradiction with the fact that $\hh_0$ is nearly invariant, by producing 
a function $f\in \hh_0$ such that $f(\gamma_1) =0$, but $\frac{f(z)}{z-\gamma_1} \notin \hh_0$. 
Note that for $\gamma\notin T$ and $f\in \hht$, $f\longleftrightarrow (c_n)$,
one has 
$$
\bigg( (c_n), \Big(\frac{\mu_n^{1/2}}{(\bar t_n -\bar \gamma)^k}\Big)\bigg)_{\ell^2} =
\sum_n  \frac{\mu_n^{1/2} c_n }{(t_n - \gamma)^k} = -\frac{1}{(k-1)!}\Big(\frac{f}{A}(z)\Big)^{(k-1)}\Big|_{z=\gamma}.
$$
Also, if $m=\max_j {\rm deg}\, p_j $, then we can already conclude that $\sum_n  |t_n|^{2m} \mu_n <\infty$ 
(other terms in \eqref{glo} belong to $\ell^2$), whence  
$B_0, \dots, B_m\in \hht$. Since the subspace $\hh_0$ is nearly invariant (and therefore considering $\frac{z-\tilde \gamma}{z-\gamma}f$
we can move any zero $\gamma$ of $f$ to any other point $\tilde\gamma$) and infinite dimensional 
we can choose $f \in \hh_0$, $f\longleftrightarrow (c_n)$, such that:
\begin{itemize} 
\item $f \perp B_0, \dots, B_m$, whence $f\perp g$, where $g \longleftrightarrow (p_j(\bar t_n))$;
\item $c_n = 0$, $n\in \nn$, whence $f\perp g$, where $g \longleftrightarrow (u_n^j)$;
\item $f^{(k)}(\gamma_l) = 0$, $1\le l \le L$, $1\le k\le m_l-1$;
\item $f^{(m_1)}(\gamma_1) \ne 0$.
\end{itemize}

From the above conditions we see that $f\perp g_j$, $g_j \longleftrightarrow (d_n^j)$, and $f(\gamma_1) = 0$.
Recall that by implication (iii)$\Longrightarrow$(i) we have $\frac{f(z)}{z-\gamma_1} \perp B_0, \dots, B_m$
as well. Thus, the function $\frac{f(z)}{z-\gamma_1}$ is orthogonal to all terms in the representation \eqref{glo}
of $d_n^j$ except $\big(\frac{\mu_n^{1/2}}{(\bar t_n -\bar \gamma_1)^{m_1}}\big)$. Thus, 
$\big(\frac{f}{z-\gamma_1}, g_j\big) \ne 0$, a contradiction to nearly invariance of $\hh_0$.
\medskip
\\
{\it Step 2.}  By Step 1 we know that, {\it for any $j$}, $d_n^j = \mu_n^{1/2} p_j (\bar t_n) + u_n^j$, where $p_j$ is some polynomial
of degree at most $N-1$ and $(u_n^j)$ is a finite vector. We claim that
$u_n^j = 0$ for all $j$ and $n\in\nn$. Assume that there exist $j$ and $n_0$ such that $u_{n_0}^j \ne 0$.
As before, we can choose $f\in \hh_0$ such that $f \perp  B_0, \dots, B_m$,
$m=\max_j {\rm deg}\, p_j$, and such that $c_n = 0$, $n\in \nn$, which is equivalent to $f(t_n) = 0$, $n\in \nn$.
We can also choose $f$ so that $f'(t_{n_0}) \ne 0$. Hence, $\frac{f}{z-t_{n_0}}$ is not orthogonal to 
$(u_n^j)$, whence  $\big(\frac{f}{z-{t_{n_0}}}, g_j\big) \ne 0$, again a contradiction.
\medskip
\\
{\it Step 3.} We conclude that $(d_n^j) = (\mu_n^{1/2} p_j (\bar t_n))$ for any $j$
where $m_j = {\rm deg}\, p_j  \le N- 1$ for any $j$, $1\le j\le N$. If $m_j<N-1$  for any $j$, we conclude that $g_j$
are linearly dependent, a contradiction. Thus, there exists $m_j = N-1$, whence
$\sum_n  |t_n|^{2N-2} \mu_n <\infty$. Moreover, we see that
${\rm Span}\, \{g_1, \dots, g_N\} = {\rm Span}\, \{ B_0, \dots, B_{N-1}\}$.
Theorem \ref{dom} is proved.
\end{proof}

It is easy to see that the finite codimension subspaces constructed in Theorem \ref{dom}
are isomorphic to Cauchy--de Branges spaces. However, they are never CdB spaces themselves with the norm inherited from $\hht$
unless $\hht$ is a rotation of a de Branges space.
We analyze the case of subspaces of codimension 1.

\begin{proposition} 
\label{nobas}
Let $\hht$ be a small de Branges space and let $\hh_0 = {\rm clos}\,\mathcal{D}_{z} = \{B\}^\perp$, where $B=B_0$.
\medskip

1. Fix $t_0\in T$ and put $T_0 = T\setminus \{t_0\}$, $A_0(z) = \frac{A(z)}{z-t_0}$ and $\nu = \sum_{n\ne 0} |t_n|^2 \mu_n \delta_{t_n}$.
Then $\hh_0$ is isomorphic to $\hh(T_0, A_0, \nu)$.
\medskip

2. $\hh_0$ is itself a CdB space, that is, has an orthogonal basis of reproducing kernels, if and only if $T$ lies on a straight line 
\textup(i.e., $\hht$ is a rotation of a de Branges space\textup).
\end{proposition}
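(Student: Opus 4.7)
My plan for Part 1 is to exhibit an explicit Riesz basis of $\hh_0$ in natural correspondence with the canonical basis of $\hh(T_0, A_0, \nu)$. Using the orthonormal basis $F_n(z) = \mu_n^{1/2}A(z)/(z-t_n)$ of $\hht$ and the fact that $B \longleftrightarrow (\mu_n^{1/2})$, so that $\hh_0 = \{B\}^\perp$ corresponds to coefficient sequences with $\sum a_n\mu_n^{1/2} = 0$, I would fix $t_0 \in T$ and introduce
$$
g_n := F_n - \frac{\mu_n^{1/2}}{\mu_0^{1/2}}F_0 = \mu_n^{1/2}(t_n-t_0)\,\frac{A_0(z)}{z-t_n}, \qquad n \ne 0.
$$
Each $g_n$ lies in $\hh_0$ by construction and is proportional to the canonical basis vector $\tilde F_n(z) = |t_n|\mu_n^{1/2}A_0(z)/(z-t_n)$ of $\hh(T_0, A_0, \nu)$ via $g_n = \frac{t_n-t_0}{|t_n|}\tilde F_n$. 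Solving the orthogonality constraint as $a_0 = -\mu_0^{-1/2}\sum_{n\ne 0}a_n\mu_n^{1/2}$ shows that $\{g_n\}_{n\ne 0}$ spans $\hh_0$ as a Riesz basis, and a short computation using $\sum\mu_n < \infty$ together with $|t_n-t_0|/|t_n| \asymp 1$ yields the norm equivalence
$$
\Bigl\|\sum b_ng_n\Bigr\|_{\hh_0}^2 = \sum|b_n|^2 + \mu_0^{-1}\Bigl|\sum b_n\mu_n^{1/2}\Bigr|^2 \asymp \sum|b_n|^2 = \Bigl\|\sum b_n\tilde F_n\Bigr\|_{\hh(T_0,A_0,\nu)}^2,
$$
so that $\tilde F_n \mapsto g_n$ extends to a bounded invertible map $\hh(T_0,A_0,\nu) \to \hh_0$.

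For the ``$\Leftarrow$'' direction of Part 2, after a rotation I may assume $T \subset \R$, in which case $\hht$ is a genuine de Branges space $\hh(E)$ and $B$ is real on $\R$. Then $\hh_0 = \overline{\mathcal{D}_z}$ is a de Branges subspace of $\hh(E)$ (classical de Branges theory, cf.\ \cite[Theorem 22]{br}), hence itself a de Branges space and in particular a CdB-space with the inherited norm.

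For the ``$\Rightarrow$'' direction, suppose $\hh_0$ has an orthogonal basis of reproducing kernels $\{K_w^{\hh_0}\}_{w\in\Lambda}$. The projection formula $K_w^{\hh_0} = K_w^{\hht} - \overline{B(w)}\|B\|^{-2}B$ turns orthogonality of these kernels in $\hh_0$ into
$$
\|B\|^2\,K_{w_2}^{\hht}(w_1) = \overline{B(w_2)}\,B(w_1), \qquad w_1 \ne w_2 \in \Lambda.
$$
Pushing this through the canonical isometry $\hht \cong \ell^2$, the projected Cauchy vectors $P\tilde a(w) = P\bigl(\mu_n^{1/2}/(\bar w-\bar t_n)\bigr)_n$, $w\in\Lambda$, form an orthogonal basis of $P\ell^2$, where $P$ is the orthogonal projection onto $\{(\mu_n^{1/2})_n\}^\perp$; adjoining $(\mu_n^{1/2})_n$ itself yields an orthogonal basis of all of $\ell^2$. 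My plan is to interpret this as unitarity (up to a rank-one correction) of a discrete weighted Hilbert transform relating $T$ and $\Lambda$, and to invoke the Belov--Mengestie--Seip theorem from \cite{bms1} (mentioned right after Theorem \ref{jam}) to conclude that $T \cup \Lambda$ lies on a common line or circle. The circle option is immediately excluded by $|t_n|\to\infty$, leaving $T$ on a line. The main obstacle is making the reduction to \emph{exact} unitarity precise enough to invoke the BMS criterion verbatim — in other words, showing that the rank-one defect coming from $B$ does not interfere with the line/circle dichotomy.
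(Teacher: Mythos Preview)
Your Part 1 is correct and essentially identical to the paper's argument: both use the constraint $c_0 = -\mu_0^{-1/2}\sum_{n\ne 0}c_n\mu_n^{1/2}$ to rewrite any $f\in\hh_0$ as a Cauchy sum over $T_0$ against $A_0$, and the norm equivalence follows from $\sum_n\mu_n<\infty$ together with $|t_n-t_0|\asymp|t_n|$. Your $\Leftarrow$ in Part 2 is also fine.

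The $\Rightarrow$ direction of Part 2, however, is incomplete. You correctly translate the hypothesis into orthogonality of projected Cauchy vectors and propose to invoke the BMS unitarity criterion from \cite{bms1}, but you then explicitly flag the rank-one defect coming from $B$ as an unresolved obstacle --- and you do not resolve it. As stated, the BMS theorem concerns \emph{unitary} discrete Hilbert transforms, and you have not shown how to absorb or remove the rank-one correction so that their result applies verbatim. This is a genuine gap, not a technicality.

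The paper bypasses BMS entirely with a direct elementary calculation. From the orthogonal basis $\{\hat K_{s_l}\}$ of $\hh_0$ and the Division Property, for any $l\ne m$ the function $\frac{z-s_l}{z-s_m}\hat K_{s_l}$ lies in $\hh_0$ and is orthogonal to every $\hat K_{s_j}$ with $j\ne m$, hence $\frac{z-s_l}{z-s_m}\hat K_{s_l} = d_{l,m}\hat K_{s_m}$ for some constant. Writing $\hat K_w = K_w - \overline{B(w)}B$ (normalizing $\|B\|=1$) and evaluating at $t_n$ gives
\[
\frac{t_n-s_l}{t_n-s_m}\Big(\frac{\overline{A(s_l)}}{\bar s_l-\bar t_n} - \overline{B(s_l)}\Big)
= d_{l,m}\Big(\frac{\overline{A(s_m)}}{\bar s_m-\bar t_n} - \overline{B(s_m)}\Big).
\]
Letting $n\to\infty$ pins down $d_{l,m}=\overline{B(s_l)}/\overline{B(s_m)}$ (or forces all $B(s_l)=0$). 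Substituting back, one obtains in either case an algebraic relation of the form $a|t_n|^2 + b\,\bar t_n^{\,2} + c t_n + d\bar t_n + e = 0$ with $|a|=|b|$, satisfied by infinitely many $t_n\to\infty$; a short analysis shows this forces the $t_n$ onto a single straight line. The rank-one term is thus handled pointwise rather than operator-theoretically. The paper itself remarks that this argument is \emph{close in spirit} to \cite{bms1}, but it does not rely on that result.
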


\begin{proof}
1. Let $T= \{t_n\}_{n\ge 0}$ and  let $f\in \hh_0$, $f  \longleftrightarrow (c_n)_{n\ge 0}$. Since $f\perp B$, one has $\mu_0^{1/2} c_0 = - \sum_{n\ge 1} c_n \mu_n^{1/2}$. Then
$$
f(z) = A(z) \sum_{n\ge 1} c_n \mu_n^{1/2} \bigg(\frac{1}{z-t_n}  - \frac{1}{z-t_0} \bigg) = 
\frac{A(z)}{z-t_0} \sum_{n\ge 1} \frac{c_n(t_n-t_0)\mu_n^{1/2}}{z-t_n}.
$$ 
Thus, $f\in\hh(T_0, A_0, \nu) $ and $\|f\|^2_{\hh(T_0, A_0, \nu)} \asymp \sum_{n\ge 1} |c_n|^2\asymp
\sum_{n\ge 0} |c_n|^2 =\|f\|^2_{\hh(T, A, \mu)}$.

Conversely, let $g\in \hh(T_0, A_0, \nu)$. Then, for some $(c_n)_{n\ge 1}\in\ell^2$ one has
$$
g(z) =A_0(z) \sum_{n\ge 1} \frac{c_n(t_n-t_0)\mu_n^{1/2}}{z-t_n}.
$$
It remains to put $c_0 = - \mu_0^{-1/2} \sum_{n\ge 1} c_n\mu_n^{1/2}$ and to reverse the above computations.
\medskip

2. Denote by $\hat K_w$ the reproducing kernel of $\hh_0$.
If we assume that $\|B\|^2_{\hht}  = \sum_n \mu_n =1$, then $\hat K_w  = K_w - \overline{B(w)}B $,
where $K_w$ is the reproducing kernel of $\hht$. 

Assume that $\hh_0$ has an orthogonal basis of reproducing kernels $\{\hat K_{s_l}\}$. 
Then $\hat K_{s_l}(s_m) =0 $, $m\ne l$. For any $m,l$, 
the function $\frac{z-s_l}{z-s_m} \hat K_{s_l}$ belongs to $\hh_0$ and is orthogonal to all $\hat K_{s_j}$, $j\ne m$. Therefore, 
there exists a constant $d_{l,m}$ such that
$$
\frac{z-s_l}{z-s_m} \hat K_{s_l} = d_{l,m} \hat K_{s_m}.
$$
Since $\hat K_w  = K_w - \overline{B(w)}B $, we obtain by comparing the values at $t_n$ (for  $t_n \ne s_l, s_m$) that
\begin{equation}
\label{bro}
\frac{t_n-s_l}{t_n -s_m} \bigg( \frac{\overline{A(s_l)}}{\bar s_l-\bar t_n}
- \overline{B(s_l)}\bigg) = d_{l,m}
\bigg( \frac{\overline{A(s_m)}}{\bar s_m-\bar t_n}
- \overline{B(s_m)} \bigg).
\end{equation}
Taking the limit as $n\to \infty$ we conclude that $\overline{B(s_l)} = d_{l,m} \overline{B(s_m)}$. 
\medskip
\\
{\it Case 1.} Assume that $B(s_m) =0$ for some $m$. Then $B(s_l) = 0$ for any $l$ and thus $\{s_k\} \subset \ZZ(B)$.
Then, for any $n$ such that  $t_n \ne s_l, s_m$,
$$
\overline{A(s_l)} \frac{t_n-s_l}{t_n -s_m}  = 
\overline{A(s_m)} \frac{\bar t_n - \bar s_l}{\bar t_n - \bar s_m}.
$$
Since $t_n\to \infty$, we conclude  that $A(s_l) = A(s_m)$, whence $t_n$ satisfy the equation
$\ima (t_n (\bar s_m -\bar s_l) + \bar s_l s_m) =0$, which is an equation of a line. 
\medskip
\\
{\it Case 2.} Now assume that $B(s_m) \ne 0$ for any $m$. Then $d_{l,m} = \overline{B(s_l)}/\overline{B(s_m)}$.
Inserting this into \eqref{bro} we get
$$
\overline{A(s_m)}\, \overline{B(s_l)} \,\frac{ t_n - s_m}{\bar t_n - \bar s_m}  - 
\overline{A(s_l)} \,\overline{B(s_m)}\, \frac{t_n-s_l}{\bar t_n - \bar s_l}
= \overline{B(s_l)}\, \overline{B(s_m)} (s_m-s_l)
$$
for $t_n \ne s_l, s_m$. 
This equation is equivalent to $a|t_n|^2 +b(\bar t_n)^2 +ct_n +d\bar t_n +e= 0$ for some
$a, \dots , e\in \co$, $b= \overline{B(s_l)}\, \overline{B(s_m)} (s_m-s_l) \ne 0$. Since $t_n \to \infty$, we have $|a| = |b|$.
Thus there exists $\theta\in\R$ such that $e^{i\theta}t_n$ satisfy the equation $|z|^2 - z^2 + az+b\bar z+c =0$
for some new coefficients $a,b,c\in \co$. It is easy to show that this equation has infinitely many solutions 
tending to infinity only if it is equivalent to an equation $\ima z=const$. 

Thus, in each of two cases all points $t_n$ (maybe, except two if $s_m\in T$ or $s_l\in T$) lie on the same line. 
Starting from different values of $s_m, s_l$ we conclude that all $t_n$
are on the same line.
\end{proof}

\begin{remark}
{\rm Statement 2 of Proposition \ref{nobas} and its proof are
close to Theorem \ref{jam} (\cite[Theorem 1]{bms1}) by Belov, Mengestie and Seip. 
It is also worth mentioning  that Theorem \ref{jam} can be related via the functional model of Section \ref{appli}
with a result of E.~Ionascu \cite{ion} who showed that the spectra of diagonal normal operators who have a normal
rank one perturbation must lie on a straight line or one a circle. }
\end{remark}
\bigskip


\section{Density of polynomials and structure of nearly invariant subspaces}
\label{stru}

Assume that $\sum_n \mu_n |t_n|^{2j} <\infty$ for any $j$. Then, by Theorem \ref{dom},
the space $\hht$ contains a subspace of any finite codimension and these subspaces are ordered
by inclusion. It turns out that, under additional condition that the set of all polynomials $\pp$ is dense
in $L^2(\mu)$, all nearly invariant subspaces of $\hht$ are of finite codimension; therefore they are 
of the form ${\rm clos}\,\mathcal{D}_{z^j}$, $j\in \N$, and are ordered by inclusion.

De Branges and Cauchy--de Branges spaces with ``small'' measures $\mu$ (i.e., with fast decay of $\mu_n$)
were studied in \cite{loc1, loc2}, where the notion of localization was introduced.
In what follows we will assume that the sequence $T$ is {\it power separated} (see \eqref{powsep}). 
Recall that any power separated sequence has finite convergence exponent and so $\hht$ is a space of finite order. 

We say that the space $\mathcal{H}(T,A,\mu)$
with a power separated sequence $T$:
\begin{itemize}
\item 
has the {\it localization property} if 
there exists a sequence of disjoint disks $\{D(t_n,r_n)\}_{t_n\in T}$ 
with $r_n\to 0$ such that for any nonzero $f\in \mathcal{H}(T,A,\mu)$ the set
$\mathcal{Z}(f)\setminus\cup_n D(t_n,r_n)$ is finite and each disk
$D(t_n,r_n)$ contains at most one point of $\mathcal{Z}(f)$ (counting multiplicities) for any
$n$ except, possibly, a finite number;
\medskip

\item 
has the {\it strong localization property} if there exists
a sequence of disjoint disks $\{D(t_n,r_n)\}_{t_n\in T}$ 
with $r_n\to 0$ such that for any nonzero $f\in \mathcal{H}(T,A,\mu)$ 
the set $\mathcal{Z}(f)\setminus\cup_n D(t_n,r_n)$ is finite
and each disk $D(t_n,r_n)$ contains
exactly one point of $\mathcal{Z}(f)$ for any $n$ except, possibly,
a finite number. 
\end{itemize}

As is shown \cite{loc1}, one can take $r_n = \delta (|t_n|+1)^{-M}$ for any $M>N$, where $N$ is the constant from the 
separation condition \eqref{powsep}. 

It was proved in \cite{loc1} in the de Brangean setting and in \cite{loc2} for general CdB-spaces
that the strong localization property is equivalent to the density of polynomials in 
$L^2(\mu)$, $\mu= \sum_n \mu_n \delta_{t_n}$ (the elements of $L^2(\mu)$ can be identified with sequences 
$(d_n)$ such that $\|(d_n)\|^2_{L^2(\mu)} = \sum_n |d_n|^2\mu_n <\infty$).
Therefore, we can add to Theorem \ref{stro1}
one more equivalent condition.

\begin{theorem}
\label{stro2}
Let $\htt$ be a CdB-space with a power separated sequence $T$. Then the following assertions are equivalent:
\medskip

1. $\htt$ contains a nearly invariant subspace of any finite codimension and any nontrivial 
nearly invariant subspace is of this form. 
\medskip

2. The set of all polynomials $\pp$ is contained in $L^2(\mu)$ and is dense there.
\medskip

3. The space $\mathcal{H}(T,A,\mu)$ has the strong localization property.
\end{theorem}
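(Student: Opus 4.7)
The strategy is to split the three-way equivalence into $2\Leftrightarrow 3$, already proved in \cite{loc1, loc2}, and $1\Leftrightarrow 2$, which I establish directly from Theorem \ref{dom} together with (for one direction) the ordered-structure results of \cite{abb19}.

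For $1\Rightarrow 2$: assuming condition 1, apply Theorem \ref{dom}(iii) to the nearly invariant subspace of codimension $N$ to obtain $\sum_n |t_n|^{2N-2}\mu_n<\infty$ for every $N$, i.e., $\pp\subset L^2(\mu)$. By Theorem \ref{dom} again, the nearly invariant subspaces of codimension $N$ are exactly $\hh^{(N)}:=\{B_0,\dots,B_{N-1}\}^\perp$, so they form a decreasing nested family, and the intersection $\hh_\infty:=\bigcap_N \hh^{(N)}$ is itself nearly invariant (as an intersection of closed nearly invariant subspaces). By assumption $\hh_\infty$ is either $\{0\}$ or equals some $\hh^{(N)}$; the latter is impossible since $\hh_\infty\subseteq \hh^{(N+1)}\subsetneq\hh^{(N)}$. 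A direct computation in $\ell^2$-coordinates shows that $f\longleftrightarrow(c_n)$ lies in $\hh_\infty$ iff the sequence $u_n:=c_n\mu_n^{-1/2}$ (automatically in $L^2(\mu)$) is orthogonal there to every $\bar z^j$; since orthogonality to $\overline{\pp}$ is equivalent, via $u\mapsto\bar u$, to orthogonality to $\pp$, the identity $\hh_\infty=\{0\}$ translates exactly into density of $\pp$ in $L^2(\mu)$.

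For $2\Rightarrow 1$: condition 2 yields $\sum_n|t_n|^{2N-2}\mu_n<\infty$ for every $N$, so by Theorem \ref{dom} each $\hh^{(N)}$ is a nearly invariant subspace of codimension $N$. For the uniqueness clause, I invoke the equivalent strong localization property (condition 3), under which \cite[Theorems 1.3, 1.4]{abb19} apply and show that every nontrivial nearly invariant subspace of $\hht$ has finite codimension; combined with Theorem \ref{dom}, such a subspace must then coincide with some $\hh^{(N)}$.

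The main obstacle lies in the $2\Rightarrow 1$ direction if one prefers a self-contained argument rather than citing \cite{abb19}. In that case, given a nontrivial nearly invariant $\hh_0$, one picks $f\in\hh_0$, uses strong localization to locate the zeros of $f$ one per disk $D(t_n,r_n)$ (up to finitely many exceptions), and exploits nearly invariance to divide out zeros and modify $f$ within $\hh_0$. The natural inductive parameter is $N=\max\{k:\hh_0\subseteq\hh^{(k)}\}$, which is finite because $\bigcap_k\hh^{(k)}=\{0\}$ by the argument of the preceding paragraph; the inductive step to $N=0$ is carried out by realizing $\hh^{(N)}$ itself, up to equivalence of norms, as a CdB-space on a shrunk spectrum with a modified weight, in the spirit of Proposition \ref{nobas}.
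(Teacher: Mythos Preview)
Your $1\Rightarrow 2$ argument is essentially the paper's, phrased contrapositively: the paper assumes $\pp$ is not dense, takes a nonzero $(d_n)\in L^2(\mu)$ orthogonal to all monomials, and observes that $f\longleftrightarrow(d_n\mu_n^{1/2})$ lies in the nontrivial nearly invariant subspace $\{B_j:j\ge 0\}^\perp$ of infinite codimension. This is exactly your $\hh_\infty$.

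The gap is in $2\Rightarrow 1$. Your appeal to \cite[Theorems 1.3, 1.4]{abb19} is not substantiated: as the paper itself describes them, those results concern \emph{ordered structure} of nearly invariant subspaces ``under certain conditions on the spectrum,'' not the claim that every nontrivial nearly invariant subspace has finite codimension under power separation plus strong localization. Even if one grants a total-ordering theorem, you do not spell out the remaining deduction (namely, that a nontrivial $\hh_0$ not contained in $\hh_\infty=\{0\}$ must then \emph{contain} some $\hh^{(N)}$). Your fallback self-contained sketch starts correctly but the proposed induction via realizing $\hh^{(N)}$ as a CdB space in the spirit of Proposition~\ref{nobas} is too vague to evaluate and, in any case, not how the paper proceeds.

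The paper instead proves $3\Rightarrow 1$ directly, without citing \cite{abb19}. Given a nontrivial nearly invariant $\hh_0$ and a nonzero $f\in\hh_0$, strong localization pairs each zero $z_n$ of $f$ with a point $t_n$, $n\in\nn$, with $\N\setminus\nn$ finite. One then forms
\[
g_k(z)=f(z)\prod_{n\in\nn,\ n\le k}\frac{z-t_n}{z-z_n},\qquad g(z)=f(z)\prod_{n\in\nn}\frac{z-t_n}{z-z_n};
\]
each $g_k\in\hh_0$ by nearly invariance, and (taking $M$ large enough in the radii $r_n=(|t_n|+1)^{-M}$) one has $g_k\to g$ in $\hht$, so $g\in\hh_0$. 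Now $g=(A/P)h$ with $P=\prod_{n\notin\nn}(z-t_n)$ and $h$ entire; since $g/A=o(1)$ outside a set of zero area density, $h$ has polynomial growth there, and by the Liouville-type Theorem~\ref{dens} $h$ is a polynomial. Hence $\hh_0$ contains some $A/Q$ with $Q$ a polynomial of degree $m$ with zeros in $T$, and by nearly invariance it contains \emph{all} such $A/Q$; the orthogonal complement of this family is ${\rm Span}\{B_0,\dots,B_{m-2}\}$, so ${\rm clos}\,\mathcal D_{z^{m-1}}\subset\hh_0$ and $\hh_0$ has finite codimension. Your sketch misses the two key analytic ingredients here: the convergence $g_k\to g$ in $\hht$ (an infinite product of zero-moves, not a finite one), and the use of Theorem~\ref{dens} to force $h$ to be a polynomial.
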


\begin{proof}
Equivalence of statements  2 and 3 was shown in \cite[Theorem 1.2]{loc2}. 
\medskip
\\
{\bf $1 \Longrightarrow 2$.} By Theorem \ref{dom}, $\sum_n \mu_n |t_n|^{2j} <\infty$ for any $j$, and so 
$\pp \subset L^2(\mu)$. Assume that the polynomials are not dense in $L^2(\mu)$. Then there exists
a nonzero sequence $(d_n)\in L^2(\mu)$ such that $\sum_n d_n t_n^j \mu_n = 0$. If we put 
$c_n  = d_n \mu_n^{1/2}$, then $(c_n) \in \ell^2$ and   $\sum_n c_n t_n^j \mu^{1/2}_n = 0$.
Therefore, the function $f(z) = A(z) \sum_n \frac{c_n \mu_n^{1/2}}{z-t_n}$ is in $\hht$ 
and is orthogonal to all functions $B_j$ from \eqref{bj}. Put $\hh_0 = \{B_j: j\ge 0\}^\perp$. 
Then $\hh_0$ is nontrivial, of infinite codimension and, by the proof of the implication (iii)$\Longrightarrow$(i) of Theorem \ref{dom}, 
$\hh_0$ is nearly invariant, a contradiction.
\medskip
\\
{\bf $3 \Longrightarrow 1$.}  Assume that $\hht$ has the strong localization property
and let $\hh_0$ be its nearly invariant subspace. Let $f\in\hh_0$, $f\ne 0$. Fix $M>N$,
where $N$ is the constant from the separation condition \eqref{powsep}. 
Assume that $T$ is enumerated by positive integers, $T = \{t_n\}_{n\ge 1}$.
Then there exists $\nn$ such that $\N \setminus \nn$ is finite and for any 
$n\in \nn$ the disk $D(t_n, (|t_n|+1)^{-M})$ contains exactly one zero of $f$, say $z_n$. Put
$$
g(z) = f(z)\prod_{n\in \nn} \frac{z-t_n}{z-z_n}, \qquad g_k(z) =  f(z)\prod_{n\in \nn, n\le k} \frac{z-t_n}{z-z_n}.
$$
It is easy to show that if $M$ is sufficiently large, e.g., $\sum_n |t_n|^{M-N} <\infty$, then $g\in\hht$ (see \cite[Section 3]{loc2} for details)
and moreover $g_k\to g$ in $\hht$. Since $g_k\in \hh_0$, we conclude that $g\in \hh_0$. Also, we can write
$$
g(z) = \frac{A(z)}{P(z)}h(z),
$$
where $P(z) = \prod_{n\in \N \setminus \nn}(z-t_n)$ is a polynomial and $h$ is some entire function.

Any function in $\hht$ is majorized by $A$ in the sense that $\frac{f(z)}{A(z)} = o(1)$
as $z\to\infty$ outside a set of zero area density. Thus, $h$ admits a polynomial estimate outside 
a set of zero area density, whence $h$ is a polynomial by Theorem \ref{dens}. 
We conclude that $\hh_0$ contains a function of the form $\frac{A}{P}$ where $P$
is a polynomial of some degree $m$ with zeros in $T$. Since $\hh_0$ is nearly invariant, 
we have $\frac{A(z)}{(z-t_1)\dots (z-t_m)} \in \hh_0$ for any choice of distinct $t_1, \dots, t_m$.

It remains to note that  for $F\in\hht$
$$
F\perp \Big\{ \frac{A(z)}{(z-t_1)\dots (z-t_m)}\Big\} \quad \Longleftrightarrow \quad F\in
{\rm Span}\, \{ B_0, \dots, B_{m-2}\}.
$$ 
Then ${\rm clos}\,\mathcal{D}_{z^{m-1}} \subset \hh_0$, whence the codimension of $\hh_0$ is finite. 
\end{proof}

Further results related, in particular, to a more subtle property of localization (which is not strong)
as well as numerous examples can be found in \cite{loc1, loc2}.

\end{document}